\newtheorem{lem}{Lemma}[section]
\newtheorem{thm}{Theorem}[section]
\newtheorem{co}[thm]{Corollary}
\numberwithin{equation}{section}
\begin{document}
\title{Moments, moderate  and large deviations for a branching process
in a random environment}
\author{Chunmao HUANG$^{a,b}$, Quansheng LIU$^{a,b,}$\footnote{Corresponding author at:  LMAM, Universit\'e de Bretagne-Sud, Campus de Tohannic,
BP 573, 56017 Vannes, France. Tel.: +33 2 9701 7140; fax: +33 2 9701 7175.\newline \indent \ \ Email addresses: sasamao02@gmail.com (C. Huang), quansheng.liu@univ-ubs.fr (Q. Liu).}
\\
\small{\emph{$^{a}$LMAM, Universit\'e de Bretagne-Sud, Campus de Tohannic,
BP 573, 56017 Vannes, France}}\\\small{\emph{ $^{b}$Universit\'e Europ\'eenne de Bretagne, France}}}
\date{}
\maketitle

\textsc{Abstract}. Let $(Z_{n})$ be a supercritical branching
process in a random environment $\xi $, and $W$ be the limit of the normalized population size  $Z_{n}/\mathbb{E}[Z_{n}|\xi ]$. We
show large and moderate deviation principles for the sequence $\log Z_{n}$
(with appropriate normalization). For the proof, we calculate the critical
value for the existence of harmonic moments of $W$, and show an equivalence
for all the moments of $Z_{n}$.  Central limit theorems on $W-W_n$ and $\log Z_n$ are also established.
\\*

\emph{AMS  subject classifications.}  60J80, 60K37, 60F10.

\emph{Key words:} Branching process, random environment, moments, harmonic
moments, large deviation, moderate deviation, central limit theorem
\\*

\section{Introduction and main results}

As an important extension of the Galton-Watson process, the model of  branching process in a random environment was introduced
first by Smith \& Wilkinson (1969, \cite{smith}) for the independent environment case, and then by Athreya \& Karlin (1971, \cite{a1})
for the stationary and ergodic environment case. See also Athreya \& Ney (1972, \cite{a}) and Tanny (1977, \cite{tanny1}; 1988, \cite{tanny2}) for some basic results
on the subject. The study of asymptotic properties of a branching process in a random environment has recently  received attention, see for example Afanasyave, Geiger, Kersting \& Vatutin (2005, \cite{af1} \& \cite{af2}),  Kozlov (2006, \cite{kozlov}), Bansaye \& Berestycki (2009, \cite{ba}),  Bansaye \& B\"oinghoff (2010, \cite{ba2}),
B\"oinghoff \& Kersting (2010, \cite{bo}), and B\"oinghoff,  Dyakonova, Kersting \& Vatutin (2010, \cite{bd}),  among others.
Here,  for a supercritical branching process $(Z_n)$  in a random environment, we shall mainly show asymptotic properties of the moments of $Z_n$, and prove moderate and large deviation principles for ($\log Z_n$). In particular, our result on the annealed harmonic moments completes that of Hambly (1992, \cite{ham}) on the quenched harmonic moments, and  extends the corresponding theorem of
Ney \& Vidyashanker (2003, \cite{ney}) for the Galton-Watson process; our moderate and large deviation principles complete the results of
  Kozlov (2006, \cite{kozlov}),  Bansaye \& Berestycki (2009, \cite{ba}),  Bansaye \& B\"oinghoff (2010, \cite{ba2}) and  B\"oinghoff \& Kersting (2010, \cite{bo}) on large deviations.

 Let us  give a description of the model.
Let $\xi =(\xi _{0},\xi _{1},\xi _{2},\cdots )$ be a
sequence of independent and identically distributed (i.i.d.) random variables taking values in some space
$\Theta ,$ whose realization determines a sequence of probability
generating functions
 \begin{equation}
f_{n}(s)=f_{\xi _{n}}(s)=\sum_{i=0}^{\infty }p_{i}(\xi
_{n})s^{i},\;s\in \lbrack 0,1],\qquad p_{i}(\xi _{n})\geq 0,\qquad
\sum_{i=0}^{\infty }p_{i}(\xi _{n})=1.
\end{equation}
A branching process $(Z_{n})_{n\geq 0}$ in the random
environment $ \xi $ can be defined as follows:
\begin{equation}
Z_{0}=1,\qquad Z_{n+1}=\sum_{i=1}^{Z_{n}}X_{n,i}\;\;n\geq 0,
\end{equation}
where  given the environment
$\xi $,  $X_{n,i}$ $(i=1,2,...)$ are independent of each other and independent of $Z_n$, and have
the same distribution determined by $f_{n}$.

Let $(\Gamma ,\mathbb{P}_{\xi })$ be the probability space under which the
process is defined when the environment $\xi $ is given. As usual,
$\mathbb{P}_{\xi }$ is called \emph{quenched law}. The total probability
space can be formulated as the product space $(\Gamma\times \Theta
^{{\mathbb{N}}} ,\mathbb{P})$, where $\mathbb{P}=\mathbb{P}_{\xi }\otimes \tau $
in the sense that for all measurable and positive function $g $, we
have
\begin{equation*}
\int gd\mathbb{P}=\int \int g(\xi ,y)d\mathbb{P}_{\xi }(y)d\tau (\xi ),
\end{equation*}
where $\tau $ is the law of the environment $\xi $. The total probability $\mathbb{P}$ is usually called
\emph{annealed law}. The quenched law $\mathbb{P}_{\xi }$ may be considered
to be the conditional probability of the annealed law $\mathbb{P}$ given $\xi
$. The expectation with respect to $\mathbb{P}_{\xi }$ (resp. $\mathbb{P}$) will be denoted $\mathbb{E}_{\xi }$ (resp. $\mathbb{E}$).

For $\xi=(\xi_0,\xi_1, \cdots )$ and  $n\geq 0$, define

\begin{equation}
\label{mnp}
m_{n}(p) \; =\; m_n(p, \xi) \; =\; \sum_{i=0}^{\infty }i^p p_{i}(\xi _{n}) \quad \mbox{ for } p>0,
\end{equation}
\begin{equation}
\label{mn}
m_n = m_n (1), \;\;  \Pi_{0}=1\;\;\text{and} \;\;\Pi_{n}=m_{0}\cdots m_{n-1}\;\text{ for }n\geq 1.
\end{equation}
Then  $m_n(p) =\mathbb{E}_\xi X_{n,i}^p$ and $\Pi_n=\mathbb{E}_\xi Z_n$.
 It is well known that the normalized population size
\begin{equation*}
W_{n}=\frac{Z_{n}}{\Pi_{n}}
\end{equation*}
is a nonnegative martingale under $\mathbb{P}_{\xi }$ (for each $\xi$) with respect to the filtration $\mathcal{F}_n=\sigma(\xi, X_{k,i}, 0\leq k\leq n-1, i=1,2,\cdots)$,
so that the limit
\begin{equation*}
W=\lim_{n\rightarrow \infty }W_{n}\;
\end{equation*}
exists almost sure (a.s.) with $\mathbb{E}W\leq 1$. We shall always assume that
\begin{equation}\label{MDE1.0}
\mathbb{E}\log m_{0}\in (0,\infty )\quad \text{ and }\quad
\mathbb{E}\frac{Z_{1}}{m_{0}}\log^{+}Z_{1}<\infty.
\end{equation}
The first condition means that the process is supercritical; the second implies that $W$ is non-degenerate. Hence (see e.g. Athreya \& Karlin (1971, \cite{a1}))
\begin{equation*}
\mathbb{P}_{\xi }(W>0)=\mathbb{P}_{\xi }(Z_{n}\rightarrow \infty )=\lim_{n\rightarrow
\infty }\mathbb{P}_{\xi }(Z_{n}>0)\quad a.s..
\end{equation*}
For simplicity, \underline{we write often  $ p_i$ for $p_i(\xi_0) $ and assume always}
$$
p_0=0\quad a.s.$$
Therefore $W>0$ and $Z_{n}\rightarrow \infty $ a.s..

It is known that $\frac{\log Z_{n}}{n}\rightarrow \mathbb{E}\log
m_{0}$ a.s. on $\{Z_{n}\rightarrow \infty \}$ (see e.g. Tanny (1977, \cite{tanny1})). We are interested
in the asymptotic properties of the corresponding deviation
probabilities. Notice that
\begin{equation}\label{MDE0}
\log Z_n =\log \Pi_n + \log W_n .
\end{equation}
Since $W_n\rightarrow W>0$ a.s.,  certain asymptotic properties of $\log
Z_n$ would be determined by those of $\log \Pi_n$.  We shall show that $\log Z_n$ and $\log \Pi_n$ satisfy the same limit theorems under suitable moment conditions.
\\*

At first, we  present a large deviation principle. Let $\Lambda (t )=\log
\mathbb{E}m_{0}^{t }$. Assume that $m_0$ is not a constant a.s. and  that $\Lambda (t )<\infty $
for all $t \in \mathbb{R}$. Let
\begin{equation*}
\Lambda ^{\ast }(x)=\sup_{t\in \mathbb{R}}\{t x-\Lambda (t)\}
\end{equation*}
be the Fenchel-Legendre transform of $\Lambda $. It is well known
(\cite{z}, Lemma 2.2.5) that $\Lambda^* (\mathbb{E}\log m_{0})=0$, $\Lambda
^{\ast }(x)$ is strictly increasing for $x\geq \mathbb{E}\log m_{0}$ and  strictly decreasing for
$x\leq \mathbb{E}\log m_{0}$; moreover,
\begin{equation*}
\Lambda ^{\ast }(x)=\left\{\begin{array}{ll}
					tx-\Lambda (t )&\text{if $x=\Lambda^{\prime }(t )$ for some $t\in\mathbb{R}$,}\\
					\infty&\text{if $x\geq\Lambda^{\prime }(\infty )$ or $x\leq\Lambda^{\prime }(-\infty )$ }.
					\end{array}\right.
\end{equation*}
In fact, $\Lambda ^{\ast }$ is the rate function with which $\log\Pi_n$ satisfies a large deviation principle.
We introduce the following assumption:
\newline
\newline
\emph{(\textbf{H}) There exist constants $\delta>0$ and $A>A_{1}>1$ such that a.s.}
\begin{equation}
A_{1} \; \leq \;  m_0 \quad\text{and}\quad m_0({1+\delta}) \; \leq \;  A^{1+\delta},
\end{equation}
(recall that $m_0$ and $m_0(1+\delta)$ were defined in (\ref{mnp}) and (\ref{mn})). Notice that the second condition implies that
$m_0 \leq A$ a.s.

The theorem below shows that  $\log Z_n$ and $\log \Pi_n$ satisfy
the same large deviation principle.

\begin{thm}[Large deviation principle] \label{MDT5.2}
Assume (H). If $\mathbb{E}Z_1^s<\infty$  for all $s>1$ and $p_1=0$ a.s., then for any measurable subset $B$ of $ \mathbb{R}$,
  \begin{eqnarray*}
  -\inf_{x\in B^o}\Lambda^*(x)&\leq&\liminf_{n\rightarrow\infty}\frac{1}{n}\log \mathbb{P}\left(\frac{\log Z_n}{n}\in B\right)\\
  &\leq&\limsup_{n\rightarrow\infty}\frac{1}{n}\log \mathbb{P}\left(\frac{\log Z_n}{n}\in B\right)
  \leq  -\inf_{x\in \bar{B}}\Lambda^*(x),
  \end{eqnarray*}
  where $B^o$ denotes the interior of $B$, and $\bar B$ its closure.
\end{thm}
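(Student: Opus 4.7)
The strategy is to decompose $\log Z_n = \log \Pi_n + \log W_n$ and establish the LDP separately for the deterministic piece $\log \Pi_n = \sum_{k=0}^{n-1}\log m_k$, then transfer it to $\log Z_n$ via exponential equivalence. Since the environment $\xi$ is i.i.d., $(\log m_k)_{k\geq 0}$ is an i.i.d.\ sequence, and hypothesis (H) implies $\log A_1 \leq \log m_0 \leq \log A$ a.s., so $\log m_0$ is bounded and $\Lambda(t)=\log\mathbb{E}m_0^t$ is finite on all of $\mathbb{R}$. Classical Cramér's theorem in $\mathbb{R}$ therefore yields the LDP for $\frac{1}{n}\log \Pi_n$ with rate function $\Lambda^*$.

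The bulk of the work is to establish the exponential equivalence
$$\limsup_{n\to\infty}\frac{1}{n}\log \mathbb{P}\bigl(|\log W_n| > n\varepsilon\bigr) \;=\; -\infty \qquad \text{for every } \varepsilon>0,$$
after which the standard exponential-equivalence lemma (Dembo--Zeitouni, Theorem 4.2.13) transports the LDP from $\log \Pi_n/n$ to $\log Z_n/n$. I would split this into upper and lower tail bounds on $\log W_n$. The upper tail is handled by Markov's inequality:
$$\mathbb{P}(W_n > e^{n\varepsilon}) \;\leq\; e^{-sn\varepsilon}\,\mathbb{E} W_n^s.$$
The hypothesis $\mathbb{E}Z_1^s<\infty$ for every $s>1$ together with (H) forces $W$ to have finite moments of all orders and $W_n\to W$ in every $L^s$, so $\sup_n \mathbb{E} W_n^s<\infty$ for every $s$; letting $s\to\infty$ gives super-exponential decay.

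The lower tail is the delicate ingredient and is where the assumption $p_1=0$ a.s.\ enters crucially. The basic inequality I would use is again Markov,
$$\mathbb{P}(W_n < e^{-n\varepsilon}) \;\leq\; e^{-sn\varepsilon}\,\mathbb{E} W_n^{-s},$$
so what is needed is \emph{uniform} harmonic moments $\sup_n \mathbb{E} W_n^{-s}<\infty$ for every $s>0$. To obtain these, I would invoke the branching decomposition: conditionally on $\mathcal F_n$ and the shifted environment, $W$ can be written $W = W_n\cdot Y_n$ with $Y_n = Z_n^{-1}\sum_{i=1}^{Z_n} W^{(i)}$, the $W^{(i)}$ being i.i.d.\ copies of $W$ with conditional mean $1$. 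Jensen's inequality applied to the convex function $x\mapsto x^{-s}$ then gives $\mathbb{E}[Y_n^{-s}\mid \mathcal F_n]\geq 1$, whence $\mathbb{E} W_n^{-s}\leq \mathbb{E} W^{-s}$ uniformly in $n$.

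The main obstacle is thus the finiteness of every harmonic moment of $W$ itself. This is precisely the content of the harmonic-moment result announced earlier in the introduction: since $p_0=p_1=0$ a.s.\ implies $Z_1\geq 2$ a.s., we are well inside the regime where all harmonic moments of $W$ exist. Granted that input, the Jensen step transfers the bound to $W_n$, Markov gives the lower-tail super-exponential decay, exponential equivalence follows, and the LDP for $\log Z_n/n$ with rate function $\Lambda^*$ is obtained from that of $\log \Pi_n/n$.
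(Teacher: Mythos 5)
Your proposal is correct, and it takes a genuinely different route from the paper's main proof. The paper derives the LDP by first proving a precise moment equivalence (its Theorem \ref{MDT5.3}: $\mathbb{E}Z_n^t/(\mathbb{E}m_0^t)^n\to C(t)\in(0,\infty)$ for every $t\in\mathbb{R}$), which immediately gives $\frac{1}{n}\log\mathbb{E}Z_n^t\to\Lambda(t)$, and then invokes the G\"artner--Ellis theorem. You instead prove the LDP for $\log\Pi_n/n$ by Cram\'er's theorem (valid because (H) forces $\log m_0\in[\log A_1,\log A]$ and hence $\Lambda$ finite everywhere, so $\Lambda^*$ is a good rate function) and transfer it to $\log Z_n/n$ by exponential equivalence, reducing the problem to the super-exponential smallness of $\mathbb{P}(|\log W_n|>n\varepsilon)$. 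Both proofs ultimately rest on the same two ingredients: under (H), $p_0=0$ a.s.\ and $\mathbb{E}Z_1^s<\infty$ for all $s>1$ one has $\sup_n\mathbb{E}W_n^s=\mathbb{E}W^s<\infty$ for all $s>0$ (note $\mathbb{E}m_0^{1-s}\le A_1^{1-s}<1$ under (H)); and under (H) with $p_1=0$ a.s., Theorem \ref{MDT3.2}(ii) gives $\mathbb{E}W^{-s}<\infty$ for all $s>0$, with $\sup_n\mathbb{E}W_n^{-s}=\mathbb{E}W^{-s}$ by Lemma \ref{MDL3.1}. Your Jensen argument for the monotonicity of $\mathbb{E}W_n^{-s}$ is equivalent to the paper's Lemma \ref{MDL3.1} (which uses $W_n=\mathbb{E}(W\mid\mathcal{F}_n)$ directly).

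As for what each approach buys: the paper's G\"artner--Ellis route requires the full precision of Theorem \ref{MDT5.3} (the exact constant $C(t)$, a byproduct with its own applications such as the Berry--Esseen rate in Theorem \ref{MDT6.1}), whereas your route needs only the weaker qualitative facts $\mathbb{E}W^s<\infty$ and $\mathbb{E}W^{-s}<\infty$ for all $s>0$. Your approach is in fact a cleaner packaging of the ``alternative proof'' the paper itself hints at: the paper proves the two tail limits directly (Theorem \ref{MDT5.1}, via a lower bound borrowed from Bansaye--Berestycki plus an upper bound that is exactly your Markov argument) and then appeals to Lemma \ref{MDL4.2} to upgrade tail limits to a full LDP. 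Your use of exponential equivalence bypasses both the separate lower-bound construction and the tail-to-LDP upgrade lemma in one stroke, since once $\log W_n/n$ is exponentially negligible the whole LDP of $\log\Pi_n/n$ carries over intact. The trade-off is that the paper's moment-equivalence route yields more refined information (exact asymptotics of $\mathbb{E}Z_n^t$) that is reused elsewhere.
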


From Theorem \ref{MDT5.2}, we obtain immediately

\begin{co}\label{MDC1.1}
Assume (H). If $\mathbb{E}Z_1^s<\infty$  for all $s>1$ and $p_1=0$ a.s., then
  \begin{eqnarray*}
 \lim_{n\rightarrow\infty}\frac{1}{n}\log \mathbb{P}\left(\frac{\log Z_n}{n}\leq x\right)=-\Lambda^*(x)\quad\text{for $x< \mathbb{E}\log m_0,$}\\
  \lim_{n\rightarrow\infty}\frac{1}{n}\log \mathbb{P}\left(\frac{\log Z_n}{n}\geq x\right)=-\Lambda^*(x)\quad\text{for $x> \mathbb{E}\log m_0.$}
    \end{eqnarray*}
\end{co}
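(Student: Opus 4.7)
The plan is to exploit the decomposition \eqref{MDE0}, $\log Z_n = \log \Pi_n + \log W_n$, and to reduce the LDP for $\log Z_n/n$ to the (easy) LDP for $\log \Pi_n/n$. Since $\log \Pi_n = \sum_{k=0}^{n-1}\log m_k$ is a sum of i.i.d.\ random variables whose logarithmic moment generating function is $\Lambda$, finite on all of $\mathbb{R}$ by hypothesis, Cram\'er's theorem yields immediately that $\log \Pi_n/n$ satisfies the full LDP with rate function $\Lambda^*$. Hence the real task is to verify that $\log Z_n/n$ and $\log \Pi_n/n$ are \emph{exponentially equivalent}; once this is done, the standard exponential equivalence principle (Dembo--Zeitouni, Thm.~4.2.13) transfers the LDP from one to the other and proves the theorem.

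Exponential equivalence amounts to showing that for every $\varepsilon>0$,
\[
\limsup_{n\to\infty}\frac{1}{n}\log\mathbb{P}\bigl(|\log W_n|>n\varepsilon\bigr)=-\infty,
\]
which I would split into an upper and a lower tail estimate. For the \textbf{upper tail}, the hypothesis $\mathbb{E} Z_1^s<\infty$ for all $s>1$ combined with (H) yields the uniform bound $\sup_n\mathbb{E} W_n^s<\infty$ for every $s>1$ (this is one of the equivalences of moments of $Z_n$ announced in the abstract, obtainable via a Burkholder--Rosenthal argument applied to the martingale $(W_n)$). Markov's inequality then gives $\mathbb{P}(\log W_n>n\varepsilon)\le e^{-ns\varepsilon}\mathbb{E} W_n^s$, and letting $s\to\infty$ produces super-exponential decay.

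The \textbf{lower tail} is the main obstacle, and is where the assumptions $p_1=0$ a.s.\ and (H) are genuinely needed. I would reduce it to the uniform harmonic moment bound
\[
\sup_n \mathbb{E} W_n^{-r}<\infty\qquad\text{for every } r>0,
\]
after which Markov gives $\mathbb{P}(\log W_n<-n\varepsilon)\le e^{-nr\varepsilon}\mathbb{E} W_n^{-r}$, with $r\to\infty$ yielding the required super-exponential decay. To establish the harmonic moment bound I would use the standard Laplace-transform scheme: setting $\phi_n(t,\xi)=\mathbb{E}_\xi e^{-tW_n}$, the branching decomposition $W_n=m_0^{-1}\sum_{i=1}^{Z_1}W_{n-1}^{(i)}$ (with $W_{n-1}^{(i)}$ i.i.d.\ copies under the shifted environment $T\xi$) yields the recursion $\phi_n(t,\xi)=\mathbb{E}_\xi\bigl[\phi_{n-1}(t/m_0,T\xi)^{Z_1}\bigr]$. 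Since $p_1=0$ forces $Z_1\ge 2$ a.s.\ and (H) forces $m_0\ge A_1>1$ a.s., iterating this recursion together with the identity $\mathbb{E} W_n^{-r}=\Gamma(r)^{-1}\int_0^\infty t^{r-1}\phi_n(t)\,dt$ gives the claimed uniform bound. Since the abstract announces the computation of the critical exponent for the harmonic moments of $W$ earlier in the paper, in practice I would simply cite the corresponding statement (and its uniform-in-$n$ version) rather than reprove it here.
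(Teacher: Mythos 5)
Your proposal is correct, and it takes a genuinely different route from the paper. The paper's own proof of Corollary \ref{MDC1.1} is to read it off from Theorem \ref{MDT5.2} (the full LDP), which is in turn proved by the G\"artner--Ellis theorem applied to the moment asymptotics $\mathbb{E}Z_n^t \sim C(t)(\mathbb{E}m_0^t)^n$ of Theorem \ref{MDT5.3}. The paper also records a second route (Theorem \ref{MDT5.1} via Lemmas \ref{MDL5.1} and \ref{MDL5.2}) that, like yours, exploits the decomposition $\log Z_n=\log\Pi_n+\log W_n$ and Markov bounds using the moments $\mathbb{E}W^{\pm s}$; but there the lower LDP bound is supplied by a separate argument cited from Bansaye \& Berestycki (Lemma \ref{MDL5.1}), which in fact needs only the weaker standing assumption \eqref{MDE1.0}. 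Your exponential-equivalence argument is cleaner in that a single super-exponential estimate on $\mathbb{P}(|\log W_n|>n\varepsilon)$ transfers the \emph{entire} LDP from $\log\Pi_n/n$ (Cram\'er) to $\log Z_n/n$, obviating a separate lower-bound lemma; the inputs it needs --- $\sup_n\mathbb{E}W_n^{s}=\mathbb{E}W^{s}<\infty$ for all $s>0$ (from Guivarc'h--Liu together with $m_0\ge A_1>1$), and $\sup_n\mathbb{E}W_n^{-r}=\mathbb{E}W^{-r}<\infty$ for all $r>0$ (Theorem \ref{MDT3.2}(ii) under (H) and $p_1=0$ a.s., combined with Lemma \ref{MDL3.1}) --- are exactly what the corollary's hypotheses guarantee. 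One small caveat: the G\"artner--Ellis route has the side benefit of establishing the sharp moment asymptotics of Theorem \ref{MDT5.3}, which the paper reuses elsewhere (e.g.\ Theorem \ref{MDT6.1}); your route proves the LDP but not those finer asymptotics. As a proof of this particular corollary, though, your approach is complete and arguably more economical.
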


\noindent\textbf{Remark.}
This result was shown by  Bansaye \& Berestycki  (2009, \cite{ba}) when (H) holds with $\delta=1$.
If $\mathbb{P}(p_{1}>0)>0$, the rate function for
the lower deviation is no longer $\Lambda ^{\ast }(x)$: in this
case, Bansaye \& Berestycki \cite{ba} proved that under certain
hypothesis,
\begin{equation*}
\lim_{n\rightarrow \infty }\frac{1}{n}\log \mathbb{P}\left( \frac{\log
Z_{n}}{n}\leq x\right) =-\chi (x)\quad \text{for}\;x<\mathbb{E}\log
m_{0},
\end{equation*}
where $\chi (x)=\inf_{t\in \lbrack 0,1]}\{-t\log \mathbb{E}p_{1}+(1-t)\Lambda
^{\ast }(\frac{x}{1-t})\}$. Obviously, $\chi (x)\leq \Lambda ^{\ast
}(x)$.

For the upper deviation and for branching processes with  special offspring distributions,  more precise results can be found in
   Kozlov (2006, \cite{kozlov}),  B\"oinghoff \& Kersting (2010, \cite{bo}), and  Bansaye  \&  B\"oinghoff (2010, \cite{ba2}).
\\*

Notice that the Laplace transform of $\log Z_n$ is
    $$\mathbb{E}e^{t\log Z_n}=\mathbb{E}Z_n^t.$$
Therefore, Theorem \ref{MDT5.2} is a consequence of the G\"artner-Ellis theorem  (see e.g. \cite{z}) and Theorem \ref{MDT5.3} below.

\begin{thm}[Moments of $Z_n$]\label{MDT5.3}
Let $t\in \mathbb{R}$. Suppose that one of the following conditions is satisfied:
\begin{equation*}
	\begin{array}{l}
	\text{(i) $t\in(0,1]$ and $\mathbb{E}m_0^{t-1}Z_1\log^+Z_1<\infty$;}\\
	\text{(ii) $t>1$ and $\mathbb{E}Z_1^t<\infty$;}\\
	\text{(iii) $t<0$, $\mathbb{E}p_1<\mathbb{E}m_0^t$, $\|p_1\|_\infty:=\emph{esssup} \ p_1<1$ and (H) holds.}
	\end{array}
\end{equation*}
Then for some constant $C(t)\in(0,\infty)$,
$$\lim_{n\rightarrow\infty}\frac{\mathbb{E}Z_n^t}{\left(\mathbb{E}m_0^t\right)^n}=C(t).$$
\end{thm}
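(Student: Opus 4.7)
The plan is to decompose $Z_n=\Pi_n W_n$ and to absorb the deterministic factor $\Pi_n^t$ into a change of measure on the environment. In all three cases $\mathbb{E}m_0^t\in(0,\infty)$: for $t>0$ this follows from the assumed moment conditions (together with $m_0\le \mathbb{E}_\xi Z_1$), and for $t<0$ from the lower bound $m_0\ge A_1>1$ in~(H). Consequently
\begin{equation*}
M_n:=\frac{\Pi_n^t}{(\mathbb{E}m_0^t)^n}=\prod_{k=0}^{n-1}\frac{m_k^t}{\mathbb{E}m_0^t}
\end{equation*}
is a mean-one positive martingale of i.i.d.\ factors, and the measures $M_n\,d\tau$ form a consistent family. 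Let $\tilde\tau$ denote the probability on $\Theta^{\mathbb{N}}$ obtained by Kolmogorov extension; under $\tilde\tau$ the coordinates $\xi_k$ remain i.i.d.\ with tilted marginal $(m_0^t/\mathbb{E}m_0^t)\,\tau_0(d\xi_0)$. Conditioning on $\xi$ and using $\sigma(\xi)$-measurability of $\Pi_n$,
\begin{equation*}
\frac{\mathbb{E}Z_n^t}{(\mathbb{E}m_0^t)^n}=\tilde{\mathbb{E}}\bigl[\mathbb{E}_\xi W_n^t\bigr].
\end{equation*}
The theorem is thus reduced to showing that $\mathbb{E}_\xi W_n^t\to \mathbb{E}_\xi W^t$ in $L^1(\tilde\tau)$ with finite, strictly positive limit $C(t)=\tilde{\mathbb{E}}[\mathbb{E}_\xi W^t]$. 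Positivity is automatic since the standing assumption $p_0=0$ forces $W>0$ a.s.

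For the positive moment cases~(i) and~(ii) the convergence is standard once the tilt is in place. When $t\in(0,1]$, concavity gives $\mathbb{E}_\xi W_n^t\le 1$, providing $\tilde\tau$-uniform integrability for free; moreover a direct computation gives $\tilde{\mathbb{E}}[(Z_1/m_0)\log^+(Z_1/m_0)]=\mathbb{E}[m_0^{t-1}Z_1\log^+(Z_1/m_0)]/\mathbb{E}m_0^t$, so hypothesis~(i) is essentially the classical $L\log L$ condition under $\tilde\tau$ and yields $W_n\to W$ in $L^1(\mathbb{P}_\xi)$ for $\tilde\tau$-a.e.\ $\xi$. The elementary inequality $|a^t-b^t|\le|a-b|^t$ together with Jensen's inequality then passes this to the $t$-th moment. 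When $t>1$, a conditional moment inequality of Burkholder--Rosenthal or von Bahr--Esseen type applied to the martingale increments $W_{n+1}-W_n=\Pi_{n+1}^{-1}\sum_{i=1}^{Z_n}(X_{n,i}-m_n)$, combined with $\mathbb{E}Z_1^t<\infty$, bounds $\sup_n\mathbb{E}_\xi W_n^t$ by a $\tilde\tau$-integrable function of $\xi$, giving both quenched $L^t$-convergence of $W_n$ and $\tilde\tau$-uniform integrability.

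The main obstacle is case~(iii), the annealed harmonic moments. The strategy here is to iterate the fundamental identity
\begin{equation*}
W=\frac{1}{m_0}\sum_{i=1}^{Z_1}W^{(i)},
\end{equation*}
where the $W^{(i)}$ are, conditionally on $\xi$, i.i.d.\ copies of $W$ built from the shifted environment $T\xi=(\xi_1,\xi_2,\dots)$. For $t<0$, splitting according to $\{Z_1=1\}$ and $\{Z_1\ge 2\}$ and exploiting~(H) together with $\|p_1\|_\infty<1$ to control the second piece uniformly leads to a recursive inequality of the schematic form
\begin{equation*}
\mathbb{E}_\xi W^t\le p_1\,m_0^{-t}\,\mathbb{E}_{T\xi}W^t+R(\xi),
\end{equation*}
with $R\in L^1(\tilde\tau)$. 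Taking $\tilde\tau$-expectation and using that under the product measure $\tilde\tau$ the coordinate $\xi_0$ is independent of $T\xi$ and $T\xi$ is distributed as $\xi$, the prefactor becomes $\tilde{\mathbb{E}}[p_1 m_0^{-t}]=\mathbb{E}p_1/\mathbb{E}m_0^t$; the crucial hypothesis $\mathbb{E}p_1<\mathbb{E}m_0^t$ is precisely what makes this contraction coefficient strictly less than~$1$, so the iteration closes and yields $\tilde{\mathbb{E}}\mathbb{E}_\xi W^t<\infty$. Running the same recursion with $W_n$ in place of $W$ (using that the recursion respects the $n$-step structure) provides a $\tilde\tau$-integrable majorant for the whole sequence $(\mathbb{E}_\xi W_n^t)_n$, and dominated convergence together with the a.s.\ limit $W_n\to W>0$ then delivers both the quenched convergence and the uniform integrability, completing the proof.
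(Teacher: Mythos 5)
Your change-of-measure step is exactly the paper's: both tilt the environment law by $m_0^t/\mathbb{E}m_0^t$, reduce the problem to $\tilde{\mathbb{E}}[\mathbb{E}_\xi W_n^t]$, and then invoke existence of the corresponding moment of $W$ under the tilted law. Cases (i) and (ii) are handled essentially the same way (the paper cites Guivarc'h--Liu for the $L^t$-convergence when $t>1$ rather than re-deriving it from Burkholder--Rosenthal, but both routes are sound; one minor thing to note for (ii) is that the second condition $\tilde{\mathbb{E}}m_0^{1-t}<1$ must also be verified, which follows from Jensen).

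The gap is in case (iii). The paper disposes of it in two lines because it already has Theorem~\ref{MDT1.3} (the critical-value theorem for $\mathbb{E}W^{-a}$), applied under the tilted environment. You instead propose to re-derive the needed finiteness by a direct moment recursion, and the issue is the remainder term $R(\xi)$ --- the contribution of $\{Z_1\ge 2\}$ --- which you claim that (H) and $\|p_1\|_\infty<1$ ``control uniformly.'' Concretely, to bound that piece you need something like $\mathbb{E}_{T\xi}\bigl[(W^{(1)}+W^{(2)})^{-a}\bigr]$ (or its $W_n$ analogue) finite and bounded uniformly over $\xi$. The natural route is via a uniform small-ball estimate $\mathbb{P}_\xi(W\le x)\le Cx^\kappa$, which squares for the sum of two independent copies and thus beats the critical exponent; but establishing that uniform estimate is exactly the content of the paper's Theorem~\ref{MDT3.1}(i), which rests on the uniform Laplace-transform bound $\phi_\xi(t)\le\beta<1$ for $t\ge 1/K$ (Lemma~\ref{MDL3.3}) and iteration of the functional equation $\phi_\xi(t)=f_0(\phi_{T\xi}(t/m_0))$. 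Without this intermediate step your recursion is circular: you need the (quenched, uniform-in-$\xi$) harmonic moment bound to control $R(\xi)$, but that bound is what you are trying to prove. Switching to $W_n$ does not escape this, since $W_n\ge \Pi_n^{-1}\ge A^{-n}$ only gives an $n$-dependent bound, and you then still need $\mathbb{E}_{T\xi}\bigl[(W_n^{(1)}+W_n^{(2)})^{-a}\bigr]$ bounded uniformly in $n$ to make the iteration close. In short, the ``control the second piece uniformly'' step conceals the substantial part of the argument --- essentially a re-proof of Theorems~\ref{MDT3.1} and~\ref{MDT1.3} --- and the sketch as written does not supply it.
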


For $t<0$, Theorem \ref{MDT5.3} is an extension of a result of Ney \& Vidyashankar (2003, \cite{ney}) on the Galton-Watson process. Theorem \ref{MDT5.3} can also be used to study the  convergence rate in a central limit theorem for $W-W_n$ (see Theorem \ref{MDT6.1}).

A key step in the proof of Theorem \ref{MDT5.3} is the study of the
harmonic moments (moments of negative orders) of $W$, which is of interest of its own.  The
following result is our main result on this subject.

\begin{thm}[Harmonic moments of $W$]\label{MDT1.3}
Let $a>0$. Assume (H) and $\|p_1\|_\infty<1$. Then
\begin{equation*}
\mathbb{E}W^{-a}<\infty \qquad \text{if and only if}\qquad \mathbb{E}p_{1}m_{0}^{a}<1.
\end{equation*}
\end{thm}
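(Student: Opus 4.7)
The plan is to treat the two directions of the equivalence separately, both resting on the distributional recursion
\[
W \;=\; \frac{1}{m_0}\sum_{i=1}^{Z_1}W^{(i)},
\]
in which, conditionally on $\xi$ and $Z_1$, the $W^{(i)}$ are i.i.d.\ copies of $W$ distributed under the shifted environment $T\xi$. In particular each $W^{(i)}$ is distributionally independent of $\xi_0$.

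\textbf{Necessity.} I restrict the recursion to the event $\{Z_1=1\}$, on which $W=W^{(1)}/m_0$. Since $(p_1,m_0)$ is a function of $\xi_0$ alone while $W^{(1)}$ has the law of $W$ under $T\xi$ (and is therefore independent of $\xi_0$),
\[
\mathbb{E}[W^{-a}\mathbf{1}_{\{Z_1=1\}}] \;=\; \mathbb{E}[p_1 m_0^a]\,\mathbb{E}W^{-a}.
\]
Because $p_0=0$ and $\|p_1\|_\infty<1$, $\mathbb{P}(Z_1\ge 2)>0$ and $W>0$ a.s., so $\mathbb{E}[W^{-a}\mathbf{1}_{\{Z_1\ge 2\}}]>0$. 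If $\mathbb{E}W^{-a}<\infty$, adding the two contributions yields $\mathbb{E}W^{-a} > \mathbb{E}[p_1 m_0^a]\,\mathbb{E}W^{-a}$, which forces $\mathbb{E}p_1 m_0^a<1$.

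\textbf{Sufficiency.} I work with the Laplace transform $\phi(t):=\mathbb{E}e^{-tW}$ through
\[
\mathbb{E}W^{-a} \;=\; \frac{1}{\Gamma(a)}\int_0^\infty t^{a-1}\phi(t)\,dt,
\]
so that it suffices to show $\phi(t)=O(t^{-\alpha})$ for some $\alpha>a$. Iterating the quenched equation $\phi_\xi(t)=f_{\xi_0}(\phi_{T\xi}(t/m_0))$ over $n$ generations gives $\phi_\xi(t)=g_n(\phi_{T^n\xi}(t/\Pi_n))$ with $g_n:=f_{\xi_0}\circ\cdots\circ f_{\xi_{n-1}}$. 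Since $p_0=0$, the pgf $g_n$ satisfies $g_n(0)=0$ and $g_n'(0)=\prod_{k<n}p_1(\xi_k)$, and one has the pointwise bound
\[
g_n(s) \;\le\; \bigg(\prod_{k<n}p_1(\xi_k)\bigg)s \;+\; s^2,\qquad s\in[0,1].
\]
Plugging in $s=\phi_{T^n\xi}(t/\Pi_n)$, taking annealed expectation, and using the independence of the $\xi_k$ and Fubini, the linear contribution to $\mathbb{E}W^{-a}$ is exactly $(\mathbb{E}p_1 m_0^a)^n\,\mathbb{E}W^{-a}$; since $\mathbb{E}p_1 m_0^a<1$, for $n$ large this term can be absorbed on the left. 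The quadratic remainder is then controlled by a preliminary decay $\phi(t)=O(t^{-\beta_0})$, $\beta_0>0$, obtained from the elementary contraction $f_{\xi_0}(s)\le\kappa s$ valid for $s\le s_0<1$, with $\kappa=\|p_1\|_\infty+(1-\|p_1\|_\infty)s_0<1$, iterated with the aid of the uniform bound $m_0\le A$ from (H). A short bootstrap then upgrades the exponent from $\beta_0$ to any value strictly less than the critical $\alpha_c$ defined by $\mathbb{E}p_1 m_0^{\alpha_c}=1$; since the hypothesis $\mathbb{E}p_1 m_0^a<1$ means $a<\alpha_c$, one obtains $\alpha>a$ as required.

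\textbf{Main obstacle.} The delicate step is closing the bootstrap: propagating the polynomial decay past each intermediate exponent requires the identification $\mathbb{E}\big[\prod_{k<n}p_1(\xi_k)\,\Pi_n^\beta\big]=(\mathbb{E}p_1 m_0^\beta)^n$, together with uniform control of the higher-order remainders in the expansion of $g_n$. Assumption (H), in particular the upper bound $m_0(1+\delta)\le A^{1+\delta}$, is precisely what keeps those remainders from overwhelming the leading term at each stage, while the strict inequality $\mathbb{E}p_1 m_0^a<1$ ensures enough room for the absorption step that drives the induction.
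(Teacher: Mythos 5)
Your necessity argument is exactly the paper's: restrict the recursion $W=m_0^{-1}\sum_{i\le Z_1}W^{(i)}$ to $\{Z_1=1\}$, use the independence of $\xi_0$ from $W^{(1)}$ to get $\mathbb{E}[W^{-a}\mathbf{1}_{\{Z_1=1\}}]=\mathbb{E}[p_1m_0^a]\,\mathbb{E}W^{-a}$, and conclude from $\mathbb{P}(Z_1\ge2)>0$. No issue there.

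Your sufficiency direction takes a genuinely different route. The paper proves it by citing Theorem \ref{MDT3.2}(i), whose proof uses the \emph{one-step} bound $\phi_\xi(t)\le(p_1+(1-p_1)\epsilon)\phi_{T\xi}(t/m_0)$ for $t$ large (via a preliminary uniform quenched estimate), takes expectations to get a functional inequality $\phi(t)\le p_\epsilon\,\mathbb{E}\phi(\tilde A_\epsilon t)$, and then invokes Lemma \ref{MDL3.4} (a renewal-type result from Liu~2001) which converts $p\,\mathbb{E}A^{-a}<1$ directly into $\phi(t)=O(t^{-a})$. That black-boxed lemma is what does all the work. You instead iterate the pgf $n$ times, use the clean bound $g_n(s)\le q_1 s + s^2$ (which is indeed valid since $g_n$ is again a pgf with $g_n(0)=0$), and try to compare the Mellin integral $\int_0^\infty t^{a-1}\phi(t)\,dt$ with itself via Fubini.

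There is, however, a real gap in your sufficiency argument and you should not gloss over it as a ``main obstacle.'' First, the step ``for $n$ large this term can be absorbed on the left'' is circular: the identity $\int_0^\infty t^{a-1}\mathbb{E}\bigl[(\prod_{k<n}p_1(\xi_k))\phi_{T^n\xi}(t/\Pi_n)\bigr]dt=(\mathbb{E}p_1m_0^a)^n\,\Gamma(a)\,\mathbb{E}W^{-a}$ only lets you cancel if $\mathbb{E}W^{-a}$ is already finite, which is the very thing you are proving; with truncated integrals the change of variable $u=t/\Pi_n$ produces a random upper endpoint, so the cancellation is not immediate. Second, the quadratic remainder $\mathbb{E}[\phi_{T^n\xi}(t/\Pi_n)^2]$ is not under control as $n\to\infty$: writing $\phi_{T^n\xi}(u)^2\le C_1 u^{-a_1}\phi_{T^n\xi}(u)$ via the quenched bound of Theorem \ref{MDT3.1}(i) gives, after Fubini, a prefactor $(\mathbb{E}m_0^{a_1+\beta})^n$ which is geometrically large. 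So you cannot simply send $n\to\infty$, and a fixed $n$ does not improve the exponent on the linear term. To actually close the bootstrap you would need something like a $t$-dependent choice $n=n(t)\asymp\log t$ together with a careful balance of the two geometric factors, and even then you must check that the per-step exponent gain does not vanish as $\beta$ approaches the critical value $\alpha_c$; none of this is in your sketch. The cleaner path is the paper's: establish the one-step linearized functional inequality and then quote a ready-made lemma of the $\phi(t)\le p\,\mathbb{E}\phi(At)$ type.
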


Theorem \ref{MDT1.3}  reveals that under certain conditions, the
number $a_0$ satisfying $\mathbb{E}p_1m_0^{a_0}=1$ is the critical value for
the existence of the harmonic moments   $\mathbb{E}W^{-a}(a>0)$. More
precisely, we have

\begin{co}
Assume (H) and $\|p_1\|_\infty<1$. If $\mathbb{E}p_{1}m_{0}^{a_{0}}=1$, then $\mathbb{E}W^{-a}<\infty $
if $0<a<a_{0}$ and $\mathbb{E}W^{-a}=\infty $ if $a\geq a_{0}$.
\end{co}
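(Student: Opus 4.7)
The plan is to deduce this corollary directly from Theorem \ref{MDT1.3} by first establishing that the function
\[
\phi(a) \; := \; \mathbb{E}p_{1}m_{0}^{a}
\]
is strictly increasing on $(0,\infty)$. Once that is done, the hypothesis $\phi(a_{0})=1$ forces $\phi(a)<1$ for $0<a<a_{0}$ and $\phi(a)>1$ for $a>a_{0}$, and the three regimes $a<a_{0}$, $a=a_{0}$, $a>a_{0}$ can be read off from the ``if and only if'' statement of Theorem \ref{MDT1.3}.

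For the monotonicity step, I would invoke hypothesis (H), which gives $m_{0}\geq A_{1}>1$ almost surely. Hence for any $0<a_{1}<a_{2}$ the inequality $p_{1}m_{0}^{a_{1}}\leq p_{1}m_{0}^{a_{2}}$ holds a.s., and it is strict wherever $p_{1}>0$ (because $m_{0}>1$ a.s.). The very assumption $\phi(a_{0})=1>0$ forces $\mathbb{P}(p_{1}>0)>0$, so the strict inequality holds on a set of positive probability and therefore $\phi(a_{1})<\phi(a_{2})$. Note that $\|p_{1}\|_{\infty}<1$ is not needed for this monotonicity; it is only needed to apply Theorem \ref{MDT1.3} itself.

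The conclusion is then immediate: for $0<a<a_{0}$ we have $\phi(a)<1$, so Theorem \ref{MDT1.3} yields $\mathbb{E}W^{-a}<\infty$. For $a>a_{0}$ we have $\phi(a)>1$, so the finiteness criterion of Theorem \ref{MDT1.3} fails and $\mathbb{E}W^{-a}=\infty$. The critical case $a=a_{0}$ is handled by the same token: since Theorem \ref{MDT1.3} requires the \emph{strict} inequality $\phi(a)<1$ for finiteness, the equality $\phi(a_{0})=1$ still places us on the ``infinite'' side of the dichotomy, giving $\mathbb{E}W^{-a_{0}}=\infty$. There is no real obstacle in this argument; the corollary is essentially a rewriting of Theorem \ref{MDT1.3}, with the strict monotonicity of $\phi$ being the only ingredient that needs verification.
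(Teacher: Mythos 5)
Your proof is correct and is the natural justification. The paper states this corollary without proof, treating it as an immediate consequence of Theorem \ref{MDT1.3}; your argument supplies precisely the missing step (strict monotonicity of $a \mapsto \mathbb{E}p_1 m_0^a$, using $m_0 > 1$ a.s.\ from (H) and $\mathbb{P}(p_1>0)>0$ forced by $\mathbb{E}p_1 m_0^{a_0}=1>0$) that turns the ``if and only if'' of the theorem into the three-regime statement of the corollary.
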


\noindent \textbf{Remark.}  Hambly (1992, \cite{ham}) proved that under an assumption  similar to (H), the number $\alpha _{0}:=-\frac{\mathbb{E}\log
p_{1}}{\mathbb{E}\log m_{0}}$ is the critical value for the a.s. existence of
the quenched moments $\mathbb{E}_{\xi }W^{-a}(a>0)$: namely, $\mathbb{E}_{\xi
}W^{-a}<\infty$ a.s. if $a<\alpha _{0}$ and $\mathbb{E}_{\xi }W^{-a}=\infty$ a.s. if $a>\alpha _{0}$. Here we obtain the critical value for the existence of the annealed moments instead of the quenched ones. Notice
that by Jensen's inequality and the equation $\mathbb{E}p_{1}m_{0}^{a_{0}}=1$, we see the
natural relation that $a_{0}\leq \alpha _{0}$.
\\*

Now we  consider  moderate deviations. Let $(a_{n})$ be a sequence of positive numbers satisfying
\begin{equation}
\label{an}
\frac{a_{n}}{n}\rightarrow 0\quad \text{and}\quad
\frac{a_{n}}{\sqrt{n}} \rightarrow \infty \;\text{ as}\;n\rightarrow
\infty.
\end{equation}
Similar to the case of large deviation principle, $\log Z_n$ and $\log \Pi_n$ satisfy the same moderate deviation principle.

\begin{thm}[Moderate deviation principle]\label{MDT4.3}
 Assume (H) and write $\sigma^2=\emph{var}(\log m_0)\in(0,\infty)$. Then for any measurable subset $B$ of $ \mathbb{R}$,
  \begin{eqnarray*}
  -\inf_{x\in B^o}\frac{x^2}{2\sigma^2}&\leq&\liminf_{n\rightarrow\infty}\frac{n}{a_n^2}\log \mathbb{P}\left(\frac{\log Z_n-n\mathbb{E}\log m_0}{a_n}\in B\right)\\
  &\leq&\limsup_{n\rightarrow\infty} \frac{n}{a_n^2}\log \mathbb{P}\left(\frac{\log Z_n-n\mathbb{E}\log m_0}{a_n}\in B\right)
  \leq  -\inf_{x\in \bar{B}}\frac{x^2}{2\sigma^2} ,
  \end{eqnarray*}
    where $B^o$ denotes the interior of $B$, and $\bar B$ its closure.
\end{thm}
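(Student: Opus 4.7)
The plan is to use the decomposition
$$\frac{\log Z_n-n\mathbb{E}\log m_0}{a_n} \;=\; \frac{\log\Pi_n-n\mathbb{E}\log m_0}{a_n} \;+\; \frac{\log W_n}{a_n},$$
to establish the MDP for the first summand by classical theory for i.i.d.\ sums, and to show that the second summand is exponentially negligible at the moderate-deviation speed $a_n^2/n$. The conclusion will then follow by the exponential-equivalence criterion (\cite{z}, Theorem 4.2.13).

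Under (H) one has $\log m_0\in[\log A_1,\log A]$ a.s., so $\log\Pi_n=\sum_{k=0}^{n-1}\log m_k$ is a sum of i.i.d.\ bounded random variables with mean $\mathbb{E}\log m_0$ and variance $\sigma^2\in(0,\infty)$. The standard moderate deviation principle for such sums (\cite{z}, Theorem 3.7.1) yields precisely the MDP claimed for $(\log\Pi_n-n\mathbb{E}\log m_0)/a_n$, at speed $a_n^2/n$ and rate function $x\mapsto x^2/(2\sigma^2)$.

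The crux is to prove that for every $\delta>0$,
$$\lim_{n\to\infty}\frac{n}{a_n^2}\log\mathbb{P}\bigl(|\log W_n|>\delta a_n\bigr) \;=\; -\infty. \qquad (\ast)$$
For the upper tail, (H) gives $\mathbb{E}_\xi Z_1^{1+\delta}\le A^{1+\delta}$ and $m_0\ge A_1>1$ a.s., so a standard Biggins-type recursion yields $\sup_n\mathbb{E}W_n^{1+\delta}<\infty$; Markov's inequality then produces $\mathbb{P}(\log W_n>\delta a_n)\le C e^{-(1+\delta)\delta a_n}$, and since $n/a_n\to\infty$, multiplying its logarithm by $n/a_n^2$ sends the quantity to $-\infty$. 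For the lower tail I invoke Theorem \ref{MDT1.3}. Two preliminary checks are needed. First, (H) implies $\|p_1\|_\infty<1$: using $p_0=0$ and H\"older with exponent $1+\delta$ on the tail sum $\sum_{i\ge 2}ip_i$, one obtains
$$m_0 \;\le\; p_1 + A\,(1-p_1)^{\delta/(1+\delta)},$$
whence $1-p_1\ge ((A_1-1)/A)^{(1+\delta)/\delta}$ a.s. Second, for $a>0$ small enough one has $\mathbb{E}p_1 m_0^a\le\|p_1\|_\infty A^a<1$, so Theorem \ref{MDT1.3} gives $\mathbb{E}W^{-a}<\infty$. Since under (H) the martingale $(W_n)$ is bounded in $L^{1+\delta}(\mathbb{P}_\xi)$ (a.s.\ in $\xi$), it converges to $W$ in $L^1(\mathbb{P}_\xi)$, so $W_n=\mathbb{E}_\xi[W\mid\mathcal{F}_n]$; conditional Jensen applied to the convex map $x\mapsto x^{-a}$ then yields $\sup_n\mathbb{E}W_n^{-a}\le\mathbb{E}W^{-a}<\infty$, and Markov's inequality delivers the lower-tail half of $(\ast)$.

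I expect the main obstacle to be this lower-tail estimate: Theorem \ref{MDT1.3} only controls the limit $W$, so the real work is to (i) verify that assumption (H) already forces $\|p_1\|_\infty<1$ and admits a choice of $a$ with $\mathbb{E}p_1 m_0^a<1$, and (ii) transfer the harmonic-moment bound to $W_n$ uniformly in $n$ via the martingale/Jensen step. Granted $(\ast)$, exponential equivalence together with the classical MDP for $\log\Pi_n$ produces the stated MDP for $(\log Z_n-n\mathbb{E}\log m_0)/a_n$.
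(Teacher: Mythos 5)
Your proof is correct, and it takes a genuinely different route from the paper's primary argument. The paper proves Theorem~\ref{MDT4.3} via the G\"artner--Ellis theorem: it establishes $\lim_n \frac{n}{a_n^2}\Lambda_n(\frac{a_n^2}{n}t) = \frac{1}{2}\sigma^2 t^2$ by comparing the Laplace functional $\Lambda_n$ of $\log Z_n$ to the one $\bar\Lambda_n$ of $\log\Pi_n$ (Lemma~\ref{MDL4.3} and (\ref{MDE4.18})), with separate Jensen/H\"older estimates for positive and negative $t$. You instead decompose $\log Z_n=\log\Pi_n+\log W_n$, quote the classical MDP for the i.i.d.\ sum $\log\Pi_n$, show $\log W_n/a_n$ is exponentially negligible at speed $a_n^2/n$, and conclude by the exponential-equivalence criterion. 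The paper does sketch an ``alternative proof'' in this spirit (Lemma~\ref{MDT4.1}, Lemma~\ref{MDL4.1}), but only for the tail-event version (Theorem~\ref{MDT1.2}), and then needs the bespoke equivalence Lemma~\ref{MDL4.2} to upgrade to the full measurable-set statement; your use of the standard exponential-equivalence theorem from Dembo--Zeitouni short-circuits that last step cleanly. A nice byproduct of your argument is the explicit H\"older estimate showing that (H) itself forces $\|p_1\|_\infty \le 1-((A_1-1)/A)^{(1+\delta)/\delta}<1$, so Theorem~\ref{MDT1.3} applies; the paper instead reaches $\mathbb{E}W^{-s}<\infty$ for some $s>0$ under (H) alone via Theorem~\ref{MDT3.2}(i), which is a slightly more economical route to the same harmonic-moment input. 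Two small remarks: for the upper tail, Markov with just $\mathbb{E}W_n=1$ already gives $\mathbb{P}(W_n\ge e^{\eta a_n})\le e^{-\eta a_n}$, so the $(1+\delta)$-moment bound is overkill; and you should avoid reusing the letter $\delta$ both for the exponent in (H) and for the negligibility threshold in $(\ast)$, as the resulting bound $Ce^{-(1+\delta)\delta a_n}$ conflates the two.
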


Here and throughout the paper, var$(\log m_0)$ denotes the variance of $\log m_0$.

As in the case of large deviation principle, the proof of Theorem \ref{MDT4.3} is based on the G\"artner-Ellis theorem.
\\*

As another application of  Theorem \ref{MDT5.3}, we shall  also establish a central limit theorem for $W-W_n$ with exponential  convergence rate.
 Let
 \begin{equation}
 \delta_\infty^2(\xi)=\sum_{n=0}^{\infty}\frac{1}{\Pi_n}\left(\frac{m_n(2)}{m_n^2}-1\right)
 \end{equation}
 (recall that $m_n(2)=\sum_{i=1}^{\infty}i^2p_i(\xi_n)$ by (\ref{mnp})).
 Then $\delta_\infty^2$ is the variance of $W$ under $\mathbb{P}_\xi$ (see e.g. Jagers (1974, \cite{jajer})) if the series converges.  As usual, we write $T^n\xi=(\xi_n, \xi_{n+1}, \cdots)$ if $\xi=(\xi_0, \xi_1,\cdots)$ and $n\geq 0$.

\begin{thm}[Central limit theorem on $W-W_n$]
\label{MDT6.1}
Assume (H) and $\|p_1\|_\infty<1$.
 If $\mathbb{E}p_1<\mathbb{E}m_0^{-\epsilon/2}$, $\emph{essinf}\ \frac{m_0(2)}{m_0^2}>1$  and $\mathbb{E}Z_1^{2+\epsilon}<\infty$
 for some $\epsilon\in(0,1]$, then for some constant $C>0$,
\begin{equation}\label{MDET5.3.1}
\sup_{x\in\mathbb{R}}\left|\mathbb{P}\left(\frac{\Pi_n(W-W_n)}{\sqrt{Z_n}\delta_\infty(T^n\xi)}\leq x\right)-\Phi(x)\right|
\leq C\left(\mathbb{E}m_0^{-\epsilon/2}\right)^n.
\end{equation}
\end{thm}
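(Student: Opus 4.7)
The plan is to combine the branching decomposition of $W-W_n$ with a conditional Berry--Esseen estimate and then integrate, using Theorem \ref{MDT5.3}(iii) to obtain the geometric decay of $\mathbb{E} Z_n^{-\epsilon/2}$.

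First I would write, by the branching property,
\[\Pi_n(W-W_n)\;=\;\sum_{i=1}^{Z_n}\bigl(W^{(n,i)}-1\bigr),\]
where $W^{(n,i)}$ is the martingale limit of the subtree rooted at the $i$-th particle of generation $n$, taken under the shifted environment $T^n\xi$. Conditional on $\mathcal{F}_n$, these $Z_n$ random variables are i.i.d.\ with mean $1$ and variance $\delta_\infty^2(T^n\xi)$, so the quantity in (\ref{MDET5.3.1}) is a standardised sum of $Z_n$ centred i.i.d.\ summands. The Berry--Esseen inequality for moments of order $2+\epsilon$ (Katz--Petrov), applied conditionally on $\mathcal{F}_n$, gives
\[\sup_{x}\Bigl|\mathbb{P}\Bigl(\tfrac{\Pi_n(W-W_n)}{\sqrt{Z_n}\,\delta_\infty(T^n\xi)}\le x\,\Bigm|\,\mathcal{F}_n\Bigr)-\Phi(x)\Bigr|\;\le\;C\,\frac{\mathbb{E}_{T^n\xi}|W-1|^{2+\epsilon}}{\delta_\infty^{2+\epsilon}(T^n\xi)\,Z_n^{\epsilon/2}}.\]

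Next I would integrate. Since $T^n\xi$ is independent of $\mathcal{F}_n$ (hence of $Z_n$), taking expectations together with $\sup_x \mathbb{E}[\,\cdot\,]\le \mathbb{E}\sup_x|\cdot|$ and exploiting stationarity of the environment yield
\[\sup_{x}\bigl|\mathbb{P}(\cdot\le x)-\Phi(x)\bigr|\;\le\;C\,\mathbb{E}\Bigl[\frac{\mathbb{E}_{\xi}|W-1|^{2+\epsilon}}{\delta_\infty^{2+\epsilon}(\xi)}\Bigr]\cdot\mathbb{E} Z_n^{-\epsilon/2}.\]
The assumption $\mathrm{essinf}\,(m_0(2)/m_0^2)>1$ forces $\delta_\infty^2(\xi)\ge c>0$ a.s.\ (retain only the $n=0$ term in the series defining $\delta_\infty^2$), so the denominator is uniformly bounded below. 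The numerator satisfies $\mathbb{E}\,\mathbb{E}_\xi|W-1|^{2+\epsilon}\le C(1+\mathbb{E} W^{2+\epsilon})<\infty$ by the standard $L^{p}$-boundedness of $(W_n)$ under (H) and $\mathbb{E} Z_1^{2+\epsilon}<\infty$. Finally, Theorem \ref{MDT5.3}(iii) with $t=-\epsilon/2$---whose hypotheses $\mathbb{E} p_1<\mathbb{E} m_0^{-\epsilon/2}$, $\|p_1\|_\infty<1$ and (H) are exactly those of Theorem \ref{MDT6.1}---delivers $\mathbb{E} Z_n^{-\epsilon/2}\le C(\mathbb{E} m_0^{-\epsilon/2})^n$, which completes the proof.

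The delicate points are the two ingredients appearing in the middle factor: keeping $\delta_\infty(\xi)$ bounded away from $0$ uniformly in $\xi$ (for which the essential-infimum hypothesis is tailor-made), and the annealed $L^{2+\epsilon}$-boundedness of $W$, a classical but nontrivial consequence of a Burkholder-type analysis of the martingale $(W_n)$. The rest is a conditional Berry--Esseen argument, with the harmonic-moment estimate of Theorem \ref{MDT5.3} supplying the geometric rate---this is where the assumption $\mathbb{E} p_1<\mathbb{E} m_0^{-\epsilon/2}$ is actually used.
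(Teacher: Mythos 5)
Your proof follows the paper's route step for step: the same branching decomposition of $\Pi_n(W-W_n)$, a Berry--Esseen estimate of order $2+\epsilon$ applied conditionally on the generation-$n$ population with $\delta_\infty \ge \sqrt{a_0-1}>0$ bounding the normaliser below, and Theorem~\ref{MDT5.3}(iii) supplying the geometric decay of $\mathbb{E} Z_n^{-\epsilon/2}$. The one small imprecision is the phrase ``$T^n\xi$ is independent of $\mathcal{F}_n$'': with the paper's definition $\mathcal{F}_n=\sigma(\xi, X_{k,i}: 0\le k\le n-1)$ this is false (since $\sigma(\xi)\subset\mathcal{F}_n$), but what your integration step actually uses is that $T^n\xi$ is independent of $Z_n$ under $\mathbb{P}$, which does hold because $Z_n$ is a function of $(\xi_0,\ldots,\xi_{n-1})$ and the corresponding offspring variables only.
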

Notice that the condition  $\mathbb{E}p_1<\mathbb{E}m_0^{-\epsilon/2}$  is automatically satisfied when $\epsilon>0$ is small enough.

Theorem \ref{MDT6.1} shows that $W-W_n$ (with appropriate normalization) satisfies a central limit theorem with an exponential convergence rate; it improves a recent result of Wang, Gao \& Liu (2010, \cite{wang}). For Galton-Watson process, Theorem \ref{MDT6.1} improves the convergence rate of Heyde \& Brown (1971, \cite{heyde2}), and coincides with that of Ney \& Vidyashanker (2003, \cite{ney}).
\\*

Finally, as $\log \Pi_n$ satisfies
a central limit theorem, it is natural that  the same would hold
for $\log Z_n$. In fact we have

\begin{thm}[Central limit theorem on $\log Z_n$]
\label{MDT1.1}
Assume that  $\sigma^2=\emph{var}(\log m_0)\in(0,\infty)$. Then
\begin{equation}  \label{MDE1.2}
\lim_{n\rightarrow\infty}\mathbb{P}\left(\frac{\log Z_n-n\mathbb{E}\log
m_0}{\sqrt{n}\sigma} \leq x\right)=\Phi(x),
\end{equation}
where
$\Phi(x)=\frac{1}{\sqrt{2\pi}}\int_{-\infty}^{x}e^{-{u^2}/{2}}du$ is the standard normal distribution function.
\end{thm}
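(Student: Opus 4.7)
The plan is to exploit the decomposition (\ref{MDE0}), namely $\log Z_n = \log \Pi_n + \log W_n$, and treat the two terms separately. The first term $\log \Pi_n$ is a sum of i.i.d.\ random variables, so a classical CLT applies directly; the second term will be shown to be negligible after normalization by $\sqrt{n}$.

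First I would observe that $\log \Pi_n = \sum_{k=0}^{n-1} \log m_k$, where the $\log m_k$ are i.i.d.\ copies of $\log m_0$ because the environment $(\xi_k)$ is i.i.d.\ and $m_k$ is a deterministic functional of $\xi_k$. Under the standing assumption $\mathbb{E}\log m_0 \in (0,\infty)$ from (\ref{MDE1.0}) and the hypothesis $\sigma^2 = \mathrm{var}(\log m_0) \in (0,\infty)$, the Lindeberg--L\'evy theorem gives
\begin{equation*}
\frac{\log \Pi_n - n\,\mathbb{E}\log m_0}{\sqrt{n}\,\sigma} \;\Longrightarrow\; \mathcal{N}(0,1).
\end{equation*}

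Next I would handle $\log W_n$. Since $W_n$ is a nonnegative martingale with $\mathbb{E}W_n \le 1$, the a.s.\ limit $W = \lim_n W_n$ is a.s.\ finite. The standing assumption $p_0 = 0$ a.s.\ (together with the non-degeneracy condition in (\ref{MDE1.0})) ensures $W > 0$ a.s., so $\log W_n \to \log W$ a.s., with $\log W$ a.s.\ finite. Consequently,
\begin{equation*}
\frac{\log W_n}{\sqrt{n}\,\sigma} \;\longrightarrow\; 0 \quad \text{a.s.,}
\end{equation*}
and in particular in probability.

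Finally, combining the two displays above through Slutsky's theorem and the identity (\ref{MDE0}) yields (\ref{MDE1.2}). There is no real obstacle here: the result is essentially a CLT for i.i.d.\ sums, and the only thing to verify is that the stochastic correction $\log W_n$ does not inflate to order $\sqrt{n}$, which is immediate from the a.s.\ convergence $W_n \to W \in (0,\infty)$. Thus the argument is short and the main input is simply the existence (and positivity) of the limit $W$, which has been secured by the hypotheses standing throughout the paper.
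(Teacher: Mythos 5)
Your proposal is correct and follows essentially the same route as the paper: the decomposition $\log Z_n = \log \Pi_n + \log W_n$, the classical CLT for the i.i.d.\ sum $\log \Pi_n$, and the observation that $\log W_n \to \log W$ a.s.\ with $\log W$ finite (using $W\in(0,\infty)$ a.s.) so that $\log W_n / \sqrt{n} \to 0$ in probability. The only cosmetic difference is that you invoke Slutsky's theorem by name, whereas the paper writes out the corresponding $\epsilon$-sandwiching of distribution functions explicitly, which is just an inline proof of the Slutsky step.
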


The rest of the paper is organized as follows.
In Section \ref{MDS3}, we consider the harmonic moments of $W$ and prove Theorem \ref{MDT1.3}.
Section \ref{MDS4} is devoted to the study of the moments of $Z_n$ of all orders (positive or negative) and the large deviations of $\log Z_n$, where
Theorems \ref{MDT5.2} and \ref{MDT5.3} are proved with additional informations.
In Section \ref{MDS5}, we consider the moderate deviations of $\log Z_n$ and prove Theorem \ref{MDT4.3}.
In Section \ref{MDS6},  we deal with central limit theorems and prove   Theorems \ref{MDT6.1} and \ref{MDT1.1}. We end the paper
by a short appendix showing a general result on large deviations.


\section{Harmonic moments of $W$}\label{MDS3}
In this section, we shall  study the harmonic
moments of $W$, i.e. $\mathbb{E}W^{-s}$ $(s>0)$,
which are closely  related to the corresponding moments of $W_{n}$.
The following lemma reveals their relations.

\begin{lem}
\label{MDL3.1}  Assume (\ref{MDE1.0}). Then for any convex function
$\varphi:\mathbb{R}_+\rightarrow \mathbb{R}_+$,
$$
\lim_{n\rightarrow\infty}\mathbb{E}_\xi\varphi (W_{n})=\sup_{n}\mathbb{E}_\xi\varphi (W_{n})=\mathbb{E}_\xi\varphi (W)\quad a.s.,
$$
and
$$
\lim_{n\rightarrow\infty}\mathbb{E}\varphi (W_{n})=\sup_{n} \mathbb{E}\varphi (W_{n})=\mathbb{E}\varphi (W).
$$
In particular, for all $s>0$,
$$\lim_{n\rightarrow\infty}\mathbb{E}_\xi W_{n}^{-s}=\sup_{n}\mathbb{E}_\xi W_{n}^{-s}=\mathbb{E}_\xi W^{-s}\quad a.s., $$
and
$$\lim_{n\rightarrow\infty}\mathbb{E }W_{n}^{-s}=\sup_{n}\mathbb{E}W_{n}^{-s}=\mathbb{E} W^{-s}. $$
\end{lem}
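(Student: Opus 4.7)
The argument rests on two classical ingredients: the Kesten--Stigum-type $L^1$-convergence of the quenched martingale $(W_n)$, and the conditional Jensen inequality applied to the convex function $\varphi$.

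First I would record that under (\ref{MDE1.0}) (in particular the $Z_1\log^+ Z_1$ condition), the nonnegative $\mathbb{P}_\xi$-martingale $(W_n)$ is uniformly integrable for $\tau$-a.e.\ $\xi$, so that $\mathbb{E}_\xi W = 1$ and
$$
W_n = \mathbb{E}_\xi[W \mid \mathcal{F}_n] \quad a.s.
$$
for every $n$; this is the Kesten--Stigum-type theorem for branching processes in a random environment (Athreya--Karlin (1971, \cite{a1}), Tanny (1988, \cite{tanny2})) and is already cited in the excerpt. With this identification in hand, conditional Jensen applied to the convex $\varphi\geq 0$ yields
$$
\varphi(W_n)=\varphi\bigl(\mathbb{E}_\xi[W \mid \mathcal{F}_n]\bigr) \leq \mathbb{E}_\xi[\varphi(W)\mid \mathcal{F}_n] \quad a.s.,
$$
so $(\varphi(W_n))_n$ is a nonnegative $\mathbb{P}_\xi$-submartingale. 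Taking $\mathbb{E}_\xi$ shows both that $n \mapsto \mathbb{E}_\xi\varphi(W_n)$ is non-decreasing, and that it is bounded above by $\mathbb{E}_\xi\varphi(W)$. Hence
$$
\sup_n \mathbb{E}_\xi \varphi(W_n)=\lim_n \mathbb{E}_\xi\varphi(W_n)\leq \mathbb{E}_\xi\varphi(W) \quad a.s.
$$

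For the reverse inequality I would use Fatou's lemma. Because $p_0=0$ a.s.\ we have $W>0$ a.s., and every nonnegative convex function is continuous on $(0,\infty)$; combined with $W_n\to W$ a.s., this gives $\varphi(W_n)\to \varphi(W)$ a.s. Fatou's lemma then yields $\mathbb{E}_\xi\varphi(W)\leq \liminf_n \mathbb{E}_\xi\varphi(W_n)$ a.s., which together with the previous estimate proves the quenched assertion. The annealed version is obtained by integrating in $\xi$ with respect to $\tau$ and applying monotone convergence, which is legitimate because the sequence $(\mathbb{E}_\xi\varphi(W_n))_n$ is pointwise non-decreasing. The particular case $s>0$, $\varphi(x)=x^{-s}$, is immediate since this function is convex and nonnegative on $(0,\infty)$ and $W,W_n>0$ a.s.

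The only point requiring care is the $L^1$-convergence $W_n\to W$ under $\mathbb{P}_\xi$ (equivalently, the identification $W_n=\mathbb{E}_\xi[W\mid\mathcal{F}_n]$); once this is invoked from the Athreya--Karlin/Tanny theorem, the rest is a routine Jensen/Fatou pair. No delicate exchange of limits or technical moment bound is needed here, which is fitting given that this lemma is the starting point for the deeper harmonic-moment estimates of Theorem \ref{MDT1.3}.
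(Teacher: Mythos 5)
Your proof is correct and follows essentially the same route as the paper's: identify $W_n=\mathbb{E}_\xi[W\mid\mathcal{F}_n]$ via $L^1$-convergence under (\ref{MDE1.0}), apply conditional Jensen for the upper bound $\sup_n\mathbb{E}_\xi\varphi(W_n)\leq\mathbb{E}_\xi\varphi(W)$, and use Fatou for the reverse. The only differences are cosmetic: you derive the annealed statement from the quenched one by monotone convergence rather than repeating the argument with $\mathbb{E}$ in place of $\mathbb{E}_\xi$, and you make explicit (where the paper leaves it implicit) that $\varphi(W_n)\to\varphi(W)$ a.s.\ because a convex $\varphi$ is continuous on $(0,\infty)$ and $W,W_n>0$ a.s.
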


\begin{proof}[Proof]
Recall that by (\ref{MDE1.0}), $W_n\rightarrow W$ in $L^1$. Therefore, $W_n=\mathbb{E}(W|\mathcal
{F}_n)$ a.s.. By the conditional Jensen's inequality,
$$\mathbb{E}(\varphi(W)|\mathcal {F}_n)\geq\varphi(\mathbb{E}(W|\mathcal {F}_n))=\varphi(W_n)\qquad a.s.,$$
so $\mathbb{E}\varphi(W)\geq\sup_n\mathbb{E}\varphi(W_n)$.  The other side comes from
Fatou's lemma. The equality $$\lim_{n\rightarrow\infty}\mathbb{E} \varphi (W_{n})=\sup_{n}\mathbb{E} \varphi (W_{n})$$ is obvious by the monotonicity of $\mathbb{E }\varphi (W_{n})$.
For the quenched moments, it suffices to repeat the proof above with $\mathbb{E}_\xi$ in place of $\mathbb{E}$.
\end{proof}

Recall that we can estimate the harmonic moments
of a positive random variable through its Laplace transform:

\begin{lem}[\cite{liu1999}, Lemma 4.4]
\label{MDL3.2} Let $X$ be a positive
random variable. For $0<a<\infty$, consider the following
statements:\newline
\begin{equation*}
\begin{array}{ll}
(i)\;\mathbb{E}X^{-a}<\infty; & (ii)\;\mathbb{E}e^{-tX}=O(t^{-a})(t\rightarrow\infty); \\
(iii)\;\mathbb{P}(X\leq x)=O(x^a)(x\rightarrow0); & (iv)\;\forall b\in(0,a),
\mathbb{E}X^{-b}<\infty. \\
\end{array}
\end{equation*}
Then the following implications hold: (i) $\Rightarrow$ (ii) $\Leftrightarrow$ (iii) $\Rightarrow$ (iv).
\end{lem}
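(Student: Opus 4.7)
The plan is to establish the chain of implications $(i)\Rightarrow(ii)\Leftrightarrow(iii)\Rightarrow(iv)$ by elementary estimates tying together the Laplace transform $\mathbb{E}e^{-tX}$, the left tail $\mathbb{P}(X\leq x)$, and the negative moments $\mathbb{E}X^{-b}$. The unifying idea is that each of these three quantities measures how concentrated $X$ is near $0$ on the same polynomial scale of order $a$, so the implications should follow from a change of variable or a one-step integration by parts.

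For $(i)\Rightarrow(ii)$ I would use the pointwise identity $t^a e^{-tX}=(tX)^a e^{-tX}\cdot X^{-a}$ together with the trivial bound $\sup_{y\geq 0}y^a e^{-y}=(a/e)^a$ to get $t^a\mathbb{E}e^{-tX}\leq (a/e)^a\,\mathbb{E}X^{-a}$. For $(ii)\Rightarrow(iii)$ I would apply the exponential Chebyshev inequality $\mathbb{P}(X\leq x)=\mathbb{P}(e^{-tX}\geq e^{-tx})\leq e^{tx}\mathbb{E}e^{-tX}$ and choose $t=1/x$, so the hypothesis yields $\mathbb{P}(X\leq x)\leq e\cdot O(x^a)$. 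For the converse $(iii)\Rightarrow(ii)$, I would integrate by parts to write $\mathbb{E}e^{-tX}=t\int_0^\infty e^{-tx}\mathbb{P}(X\leq x)\,dx$, split at $x=1$, bound $\mathbb{P}(X\leq x)$ by $Cx^a$ on $(0,1]$ and by $1$ on $(1,\infty)$; the tail contribution is $e^{-t}$, while the main piece is controlled by $Ct\int_0^\infty e^{-tx}x^a\,dx=C\,\Gamma(a+1)\,t^{-a}$. Finally, for $(iii)\Rightarrow(iv)$, the layer cake formula $\mathbb{E}X^{-b}=\int_0^\infty\mathbb{P}(X<y^{-1/b})\,dy$, combined with the bound $\mathbb{P}(X<y^{-1/b})\leq\min(1,\,Cy^{-a/b})$, produces an integral that converges because $a/b>1$.

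There is no serious obstacle in this argument; each implication reduces to a one-line inequality and a standard integral. The only care is needed in $(iii)\Rightarrow(ii)$, where the polynomial bound on $\mathbb{P}(X\leq x)$ is only meaningful for small $x$ and must be combined with the trivial bound $1$ on the tail. One should also observe that $(iv)\not\Rightarrow(i)$ in general, since one loses the critical exponent when passing from Laplace decay or tail estimates back to moment integrability; so no attempt should be made to close the chain at the endpoint $a$.
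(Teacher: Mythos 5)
Your proof is correct. Note, however, that the paper itself does not prove this lemma; it is quoted verbatim as Lemma 4.4 of Liu (1999), so there is no in-text proof to compare against. Your argument is the standard self-contained one: for $(i)\Rightarrow(ii)$ the pointwise bound $t^a e^{-tX}=(tX)^a e^{-tX}X^{-a}\leq (a/e)^a X^{-a}$; for $(ii)\Rightarrow(iii)$ the Chebyshev-type bound with the optimal choice $t=1/x$; for $(iii)\Rightarrow(ii)$ the Fubini/integration-by-parts representation $\mathbb{E}e^{-tX}=t\int_0^\infty e^{-ts}\mathbb{P}(X<s)\,ds$ split at $s=1$ (one small point: strictly one gets $\mathbb{P}(X<s)$ here, not $\mathbb{P}(X\leq s)$, but the inequality goes the right way so the $O(t^{-a})$ bound is unaffected); and for $(iii)\Rightarrow(iv)$ the layer-cake formula with the integrability threshold $a/b>1$. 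You are also right to flag that the endpoint $b=a$ cannot be recovered from $(iii)$, so the chain correctly stops at $(iv)$ rather than closing back to $(i)$.
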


Set
\begin{equation*}
\phi_\xi(t)=\mathbb{E}_\xi e^{-tW} \quad and\quad \phi(t)=\mathbb{E}\phi_\xi(t)=\mathbb{E}
e^{-tW}\;(t\geq 0).
\end{equation*}

\begin{lem}
\label{MDL3.3}  Assume (H). Then there exist constants $\beta\in(0,1)$ and $K\geq1$ such that
\begin{equation*}
\phi_\xi(t)\leq\beta \quad a.s. \qquad\forall t\geq\frac{1}{K}.
\end{equation*}
\end{lem}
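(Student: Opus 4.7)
The strategy is to reduce the lemma to two ingredients: a uniform quenched $(1+\delta)$-moment bound $\mathbb{E}_\xi W^{1+\delta}\le C$ a.s., and the identity $\mathbb{E}_\xi W=1$ a.s., and then to combine them through a Paley--Zygmund type lower bound on $\mathbb{P}_\xi(W>1/4)$. Once $W$ stays bounded away from $0$ with a positive probability that does not depend on $\xi$, the Laplace transform $\phi_\xi(t)$ is automatically forced away from $1$ uniformly, and since $\phi_\xi(\cdot)$ is non-increasing, verifying the bound at a single value of $t$ will give the conclusion for all larger $t$.

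The main work is to show $\sup_n\mathbb{E}_\xi W_n^{1+\delta}\le C$ a.s., with $C$ independent of $\xi$. I would write $W_{n+1}-W_n=\Pi_{n+1}^{-1}\sum_{i=1}^{Z_n}(X_{n,i}-m_n)$ and apply the von Bahr--Esseen inequality to these conditionally centred independent summands, obtaining
\[
\mathbb{E}_\xi|W_{n+1}-W_n|^{1+\delta}\;\le\; C\,\frac{m_n(1+\delta)}{\Pi_n^{\delta}\,m_n^{1+\delta}}.
\]
Assumption (H) gives $\Pi_n\ge A_1^{n}$ and $m_n(1+\delta)/m_n^{1+\delta}\le(A/A_1)^{1+\delta}$, so these increments are bounded a.s.\ by $C' A_1^{-n\delta}$. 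Summing in $L^{1+\delta}(\mathbb{P}_\xi)$ yields the desired uniform moment bound on $W$ and, as a byproduct, $L^1(\mathbb{P}_\xi)$ convergence $W_n\to W$, hence $\mathbb{E}_\xi W=1$ a.s.

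Given these two facts, Markov's inequality yields $\mathbb{E}_\xi W\mathbf{1}_{\{W>M\}}\le CM^{-\delta}$; choosing $M=(2C)^{1/\delta}$ and subtracting the mass on $\{W\le 1/4\}$ (which contributes at most $1/4$ to $\mathbb{E}_\xi W\mathbf{1}_{\{W\le M\}}\ge 1/2$) forces $\mathbb{E}_\xi W\mathbf{1}_{\{1/4<W\le M\}}\ge 1/4$, so that $\mathbb{P}_\xi(W>1/4)\ge 1/(4M)=:c>0$ a.s. Bounding $e^{-tW}$ by $e^{-t/4}$ on $\{W>1/4\}$ and by $1$ elsewhere gives
\[
\phi_\xi(t)\;\le\; 1-c\bigl(1-e^{-t/4}\bigr)\qquad\text{a.s.},
\]
so the choice $K=1$ and $\beta=1-c(1-e^{-1/4})\in(0,1)$ does the job. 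The main obstacle is the first step, i.e.\ establishing the uniform quenched $(1+\delta)$-moment bound: this is where (H) is genuinely used, and keeping all constants independent of $\xi$ requires some care.
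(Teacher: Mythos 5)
Your proof is correct and shares the paper's essential first ingredient --- the uniform quenched bound $\sup_n\mathbb{E}_\xi W_n^{1+\delta}\le C$ a.s., obtained exactly as the paper does, by summing the $L^{1+\delta}(\mathbb{P}_\xi)$-norms of martingale increments controlled by (H) (the paper invokes its (\ref{MDEL331}), which for $1<p\le 2$ is the same von Bahr--Esseen bound you use; like the paper you should note explicitly that one may assume $\delta\le 1$, replacing $\delta$ by $\min\{\delta,1\}$ if needed). Where you diverge is in the second step. The paper converts the moment bound into a Laplace-transform bound analytically: it uses the elementary inequality $e^{-x}\le 1-x+\tfrac{C}{1+\delta}x^{1+\delta}$ and $\mathbb{E}_\xi W=1$ to get $\phi_\xi(t)\le 1-t+\tfrac{K^\delta}{1+\delta}t^{1+\delta}$, then minimizes this over $t$ to produce the explicit $\beta=1-\tfrac{\delta}{K(1+\delta)}$ and $K=(C\|\mathbb{E}_\xi W^{1+\delta}\|_\infty)^{1/\delta}$. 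You instead run a Paley--Zygmund-style anti-concentration argument: from $\mathbb{E}_\xi W=1$ and the $(1+\delta)$-moment bound you deduce $\mathbb{P}_\xi(W>1/4)\ge c>0$ uniformly in $\xi$, and then directly bound $\phi_\xi(t)\le 1-c(1-e^{-t/4})$. Your step is clean and correct (the truncation bookkeeping $\mathbb{E}_\xi W\mathbf{1}_{\{W>M\}}\le CM^{-\delta}$, $\mathbb{E}_\xi W\mathbf{1}_{\{W\le 1/4\}}\le 1/4$, hence $\mathbb{E}_\xi W\mathbf{1}_{\{1/4<W\le M\}}\ge 1/4$ and $\mathbb{P}_\xi(W>1/4)\ge 1/(4M)$, is sound). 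The trade-off is that the paper's analytic route gives sharper and more explicit constants ($\beta\ge 1/2$, and $K$ tied directly to the moment bound), which it reuses later; your probabilistic route is perhaps more transparent conceptually and avoids the optimization, at the cost of slightly cruder constants. Both correctly exploit monotonicity of $t\mapsto\phi_\xi(t)$ to extend the bound to all $t\ge 1/K$.
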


\begin{proof}[Proof]
Let $p=1+\delta$. By a similar argument to the one used in the proof of (\cite{liu3}, Proposition 1.3), we have $\forall k\geq 0$,
\begin{eqnarray}\label{MDEL331}
	\mathbb{E}_\xi|W_{k+1}-W_k|^p\leq\left\{\begin{array}{ll}
	2^p\Pi_k^{1-p} \overline {m}_k(p) &\text{if $1<p\leq 2$,}\\
	(B_p)^p\Pi_k^{-p/2}\mathbb{E}_\xi W_k^{p/2} \overline {m}_k(p)  &\text{if $p>2$,}
	\end{array}\right.
\end{eqnarray}
where $B_p=2\sqrt{\lceil p/2\rceil}$ with $\lceil p/2\rceil = \min \{k\in \mathbb{N}: k\geq p/2\}$,  and
$\overline{m}_k(p) = \sum_{i=0}^\infty |\frac{i}{m_k} - 1 |^p p_i (\xi_k)  $.

The assumption (H) implies that $ \| \overline{m}_0(p) \|_\infty  =   \|\mathbb{E}_\xi|\frac{Z_1}{m_0}-1|^p\|_{\infty}<\infty$ and that
$\Pi_k\geq A_1^k$ a.s.. Using the inequality (\ref{MDEL331}) and an induction argument on $[p]$ (see \cite{liu3}, Proposition 1.3), we obtain
$$\mathbb{E}_\xi W^{1+\delta}=\sup_n\mathbb{E}_\xi W_n^p\leq C\qquad a.s.$$
for some constant $C$. In fact we shall only use the result for $\delta\leq 1$. Assume that $\delta\in(0,1]$, otherwise we consider $\min\{\delta,1\}$ instead of $\delta$. Notice that the function $\frac{e^{-x}-1+x}{x^{1+\delta}}$ is positive and bounded on $(0,\infty)$. So there exists a constant $C\geq 1$ such that
\begin{equation}\label{MDEL332}
e^{-x}\leq1-x+\frac{C }{1+\delta}x^{1+\delta} \qquad\forall x >0.
\end{equation}
Take $K:=\left(C\|\mathbb{E}_\xi W^{1+\delta}\|_{\infty}\right)^{1/\delta}\in[1,\infty)$.  By (\ref{MDEL332}), we obtain
\begin{eqnarray*}
\phi_\xi(t)=\mathbb{E}_\xi e^{-tW} &\leq &1-t+\frac{C}{1+\delta}t^{1+\delta} \mathbb{E}_\xi W^{1+\delta}\\
&\leq&1-t+\frac{K^\delta}{1+\delta}t^{1+\delta} \qquad a.s..
\end{eqnarray*}
 Let $g(t)=1-t+\frac{K^\delta}{1+\delta}t^{1+\delta}$. Obviously,
 $$\min_{t>0}g(t)=g(\frac{1}{K})=1-\frac{\delta}{K(1+\delta)}=:\beta\in(0,1)$$
(it can be seen that $\beta\geq \frac{1}{2}$).
Since $\phi_\xi(t)$ is decreasing, we have for $t\geq\frac{1}{K}$,
$$\phi_\xi(t)\leq\phi_\xi(\frac{1}{K})\leq g(\frac{1}{K})=\beta \qquad a.s..$$
\end{proof}

Denote
\begin{equation}
\underline{m}= \mbox{essinf} \; Z_1 = \inf \{j>0:\mathbb {P} (Z_1=j) >0\}.
\end{equation}
Notice that $\mathbb {P} (Z_1=j) = 0$ if and only if $\mathbb{P}(p_j(\xi_0)>0)=0$, so an alternative definition of $\underline{m}$  is
\begin{equation}
\underline{m} =\inf \{j>0:\mathbb{P}(p_{j}(\xi_0) >0)>0\}.
\end{equation}

The following Theorem gives an uniform bound for the quenched harmonic moments of $W$.

\begin{thm}
\label{MDT3.1} Assume (H).
\begin{itemize}
\item[](i) If $\|p_1\|_\infty<1$, then for some constants $a>0$ and $C>0$, we have a.s.,
\begin{equation*}
\phi _{\xi }(t)\leq Ct^{-a}\;(\forall t>0),\quad \mathbb{P}_{\xi }(W\leq
x)\leq C x^{a}\;(\forall x>0)\quad
\text{and}\quad
\mathbb{E}_{\xi }W^{-a}\leq C.
\end{equation*}

\item[](ii) If  $p_1=0$ a.s.,  then  a.s.
\begin{equation*}
\phi _{\xi }(t)\leq C_2\exp (-C_1t^{\gamma })\;(\forall
t>0),\quad \mathbb{P}_{\xi }(W\leq x)\leq C_2\exp
(-C_1x^{\frac{\gamma }{\gamma -1} })\;(\forall x>0),
\end{equation*}
and
$\mathbb{E}_{\xi }W^{-s}\leq C_{s}\;(\forall s>0)$,
where $\gamma =\frac{\log \underline{m}}{\log A}\in (0,1)$,
$C_1,C_2$ and $C_s$ are positive constants independent of $\xi$.
\end{itemize}
\end{thm}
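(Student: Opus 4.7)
The plan rests on the branching decomposition $W=m_0^{-1}\sum_{i=1}^{Z_1}W^{(i)}$, where under $\mathbb{P}_\xi$ the $W^{(i)}$ are i.i.d.\ with the distribution of $W$ under $\mathbb{P}_{T\xi}$. Taking Laplace transforms yields the functional equation
\begin{equation*}
\phi_\xi(t)=f_0\bigl(\phi_{T\xi}(t/m_0)\bigr),
\end{equation*}
and since $p_0=0$ the pgf satisfies $f_0(s)\leq p_1(\xi_0)s+(1-p_1(\xi_0))s^2$ on $[0,1]$. I would use this functional inequality together with Lemma~\ref{MDL3.3} to obtain decay of $\phi_\xi(t)$, and then convert it to the claimed tail and harmonic moment bounds via Lemma~\ref{MDL3.2} (for part (i)) and a Chernoff bound plus the integral representation $\mathbb{E}_\xi W^{-s}=\Gamma(s)^{-1}\int_0^\infty t^{s-1}\phi_\xi(t)\,dt$.

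For part (i), when $\|p_1\|_\infty<1$ the bound $\phi_{T\xi}(t/m_0)\leq\beta$ from Lemma~\ref{MDL3.3} (valid for $t\geq m_0/K$) linearizes the recursion to
\begin{equation*}
\phi_\xi(t)\leq\bigl[\beta+(1-\beta)p_1(\xi_0)\bigr]\phi_{T\xi}(t/m_0)\leq\tilde\rho\,\phi_{T\xi}(t/m_0),
\end{equation*}
with $\tilde\rho:=\beta+(1-\beta)\|p_1\|_\infty<1$. Iterating $n$ times then gives $\phi_\xi(t)\leq\tilde\rho^n$ for every $t\geq\Pi_n/K$. Replacing $\Pi_n$ by the deterministic upper bound $A^n$ from (H) and choosing $n$ of order $\log(Kt)/\log A$ produces $\phi_\xi(t)\leq Ct^{-a}$ for all $t>0$, with $a=-\log\tilde\rho/\log A>0$ and $C$ independent of $\xi$. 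Lemma~\ref{MDL3.2} yields $\mathbb{P}_\xi(W\leq x)\leq Cx^a$, and the integral representation of $\mathbb{E}_\xi W^{-s}$ makes this moment uniformly bounded for any $s<a$; taking e.g.\ $s=a/2$ (and relabelling $a$) gives the stated estimate.

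For part (ii), the assumption $p_1=0$ forces $p_k(\xi_0)=0$ a.s.\ for every $k<\underline{m}$, whence the sharper pointwise bound $f_0(s)\leq s^{\underline{m}}$ and
\begin{equation*}
\phi_\xi(t)\leq\phi_{T\xi}(t/m_0)^{\underline{m}}.
\end{equation*}
Iterating gives $\phi_\xi(t)\leq\phi_{T^n\xi}(t/\Pi_n)^{\underline{m}^n}\leq\beta^{\underline{m}^n}$ for all $t\geq\Pi_n/K$. Choosing once again $n$ of order $\log(Kt)/\log A$ makes $\underline{m}^n$ of order $t^\gamma$ with $\gamma=\log\underline{m}/\log A\in(0,1)$, producing the stretched exponential $\phi_\xi(t)\leq C_2\exp(-C_1t^\gamma)$. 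Optimizing the Chernoff-type bound $\mathbb{P}_\xi(W\leq x)\leq e^{tx}\phi_\xi(t)$ in $t$ then gives the tail estimate with exponent $x^{\gamma/(\gamma-1)}$, and the fast decay makes $\mathbb{E}_\xi W^{-s}$ finite, uniformly in $\xi$, for every $s>0$ via the same integral representation.

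The main technical point is uniformity of the constants in $\xi$: the iterated inequality is only valid on the $\xi$-dependent range $t\geq\Pi_n/K$, and it is precisely hypothesis (H), through the deterministic upper bound $\Pi_n\leq A^n$, that converts this random range into a deterministic one and allows the conclusion to be stated a.s.\ with non-random constants.
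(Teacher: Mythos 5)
Your proposal is correct and follows essentially the same route as the paper: the functional equation $\phi_\xi(t)=f_0(\phi_{T\xi}(t/m_0))$, the pgf bound from $p_0=0$, the uniform bound $\phi_\xi(t)\leq\beta$ for $t\geq 1/K$ from Lemma~\ref{MDL3.3}, iteration, and the deterministic bound $\Pi_n\leq A^n$ from (H) to convert the $\xi$-dependent validity range $t\geq\Pi_n/K$ into $t\geq A^n/K$. The only (minor) divergence is at the end: where the paper passes from the Laplace-transform bound to the tail and harmonic-moment bounds by citing Lemma~\ref{MDL3.2} (part (i)) and exponential Tauberian theorems (part (ii)), you instead use a Chernoff bound $\mathbb{P}_\xi(W\leq x)\leq e^{tx}\phi_\xi(t)$ together with the integral representation $\mathbb{E}_\xi W^{-s}=\Gamma(s)^{-1}\int_0^\infty t^{s-1}\phi_\xi(t)\,dt$; this is more self-contained and equally valid, and as you note it requires shrinking $a$ slightly in part (i) to get the moment bound with the same exponent as the tail bound, which is exactly how one should read the theorem's single constant $a$.
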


\begin{proof}[Proof]
We only prove the results about $\phi_\xi(t)$, from which the
results about $\mathbb{P}_\xi(W\leq x)$ and $\mathbb{E}_\xi W^{-s}$ can be deduced by
Lemma \ref{MDL3.2} for (i), and by Tauberian theorems of exponential type (see \cite{liu5}) for (ii).

(i) It is clear that $\phi_\xi(t)$ satisfies the functional equation
\begin{equation}\label{MDE3.1}
\phi_\xi(t)=f_0(\phi_{T\xi}(\frac{t}{m_0}))
\end{equation}
(recall that $T^n\xi=(\xi_n, \xi_{n+1}, \cdots)$ if $\xi=(\xi_0, \xi_1,\cdots)$ and $n\geq 0$).
Hence a.s.,
\begin{eqnarray*}
 \phi_\xi(t)&\leq& \verb"" p_1(\xi_0)\phi_{T\xi}(\frac{t}{m_0})+(1-p_1(\xi_0))\phi_{T\xi}^2(\frac{t}{m_0})\nonumber\\
&\leq&\phi_{T\xi}(\frac{t}{m_0})\left(p_1(\xi_0)+(1-p_1(\xi_0))\phi_{T\xi}(\frac{t}{m_0})\right)\label{MDE3.2}\\
&\leq&\phi_{T\xi}(\frac{t}{m_0})\label{MDE3.3}.
\end{eqnarray*}
Similarly, we have a.s.,
\begin{eqnarray*}
\phi_{T\xi}(\frac{t}{m_0})&\leq&\phi_{T^2\xi}(\frac{t}{\Pi_2})
\left(p_1(\xi_1)+(1-p_1(\xi_1))\phi_{T^2\xi}(\frac{t}{\Pi_2})\right)\label{MDE3.4}
\leq\phi_{T^2\xi}(\frac{t}{\Pi_2}).\label{MDE3.5}
\end{eqnarray*}
Consequently, we get a.s.,
\begin{eqnarray*}
\phi_\xi(t)\leq\phi_{T^2\xi}(\frac{t}{\Pi_2})\left(p_1(\xi_1)+(1-p_1(\xi_1))\phi_{T^2\xi}(\frac{t}{\Pi_2})\right)
\left(p_1(\xi_0)+(1-p_1(\xi_0))\phi_{T^2\xi}(\frac{t}{\Pi_2})\right).
\end{eqnarray*}
By iteration, we obtain that $\forall n\geq1$, a.s.
\begin{equation}\label{MDE3.7}
\phi_\xi(t)\leq\phi_{T^n\xi}(\frac{t}{\Pi_n})\prod_{j=0}^{n-1}\left(p_1(\xi_j)+(1-p_1(\xi_j))\phi_{T^n\xi}(\frac{t}{\Pi_n})\right).
\end{equation}
By Lemma \ref{MDL3.3}, a.s.,
$\phi_{T^n\xi}(\frac{t}{\Pi_n})\leq\beta$ if $t\geq
\frac{A^n}{K}$ and $n\geq0$, since $\Pi_n\leq A^n$. Let $\bar{p}_1:=\|p_1\|_\infty$. As
$p_1(\xi_0)\leq\bar{p}_1$ a.s., it follows that a.s.,
$$\phi_\xi(t)\leq \beta\alpha^n \;\;\text{for $t\geq \frac{A^n}{K}$ and $n\geq0$,} $$
where $\alpha=\bar{p}_1+(1-\bar{p}_1)\beta\in(0,1)$. For $t\geq \frac{1}{K}$,
take $n_0=n_0(t)=[\frac{\log(Kt)}{\log A}]\geq 0$. Clearly,
$t\geq\frac{A^{n_0}}{K}$ and $\frac{\log(Kt)}{\log A}-1\leq
n_0\leq\frac{\log(Kt)}{\log A}$. Thus for $t\geq \frac{1}{K}$, a.s.
$$\phi_\xi(t)\leq \beta\alpha^{n_0}\leq\beta\alpha^{-1}(Kt)^{\frac{\log \alpha}{\log A}}=C_0t^{-a}\;,$$
where $C_0=\beta\alpha^{-1}K^{\frac{\log \alpha}{\log A}}>0$ and
$a=-\frac{\log \alpha}{\log A}>0$.  therefore we can choose a constant $C>0$ such that a.s.,
$\phi_\xi(t)\leq Ct^{-a} (\forall t>0)$. Thus the first part of the
theorem is proved.

(ii) By the equation (\ref{MDE3.1}),
$$\phi_\xi(t)=f_0(\phi_{T\xi}(\frac{t}{m_0}))\leq\left(\phi_{T\xi}(\frac{t}{m_0})\right)^{\underline{m}}\quad a.s..$$
By iteration, using Lemma \ref{MDL3.3}  we have
$$\phi_\xi(t)
\leq\left(\phi_{T^n\xi}(\frac{t}{\Pi_n})\right)^{\underline{m}^n}\leq\beta^{\underline{m}^n}\quad
a.s.\quad\text{for}\quad t\geq \frac{A^n}{K}.$$ Like the proof of
the first part, take $n_0=n_0(t)=[\frac{\log(Kt)}{\log A}]\geq 0$.
Then for $t\geq\frac{1}{K}$,
$$\phi_\xi(t)\leq\beta^{\underline{m}^{n_0}}\leq\exp{\left(\underline{m}^{-1}(\log \beta)(Kt)^{\frac{\log \underline{m} }{\log
A}}\right)}\leq\exp{(-C_1t^\gamma)}\quad a.s.,$$ where
$C_1=-\underline{m}^{-1}K^{\frac{\log \underline{m}}{\log
A}}\log\beta>0$ and $\gamma=\frac{\log \underline{m}}{\log
A}\in(0,1)$. It follows that we can choose $C_2>0$ such that a.s.,$\phi_\xi(t)\leq
C_2\exp(-C_1t^\gamma)$, $\forall t>0$. This completes the proof.
\end{proof}

We now study the annealed moments of $W$.
\begin{thm}
\label{MDT3.2} Assume (H).
\begin{itemize}
\item[](i) Then there exist constants $a>0$ and $C>0$ such that
\begin{equation}\label{MDET3.2.1}
\phi(t)\leq Ct^{-a}\;(\forall t>0),\;\; \mathbb{P}(W\leq x)\leq C
x^{a}\;(\forall x>0)\;\;\text{and}\;\;
\mathbb{E}W^{-s}< \infty\;(\forall s\in(0,a)).
\end{equation}

If additionally $\|p_1\|_\infty<1$, then for each $a>0$ with $\mathbb{E}p_1m_0^a<1$,  (\ref{MDET3.2.1}) holds for some constant $C>0$.

\item[](ii) If  $p_1=0$ a.s.,  then
\begin{equation*}
\phi(t)\leq C_2\exp(-C_1t^{\gamma})\;(\forall t>0),\quad \mathbb{P}(W\leq
x)\leq C_2\exp(-C_1x^{\frac{\gamma}{\gamma-1}})\;(\forall
x>0),
\end{equation*}
and $\mathbb{E}W^{-s}< \infty\;(\forall s>0)$,
where $\gamma=\frac{\log\underline{m}}{\log A}\in(0,1)$, and
$C_1, C_2$ are positive constants.
\end{itemize}
\end{thm}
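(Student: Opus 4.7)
The three bounds in each case---on $\phi$, on $\mathbb{P}(W\le x)$, and on $\mathbb{E}W^{-s}$---are linked by Lemma~\ref{MDL3.2} in the polynomial regime and by Tauberian theorems of exponential type in the exponential regime, just as in Theorem~\ref{MDT3.1}, so I would focus on bounding the Laplace transform $\phi(t)=\mathbb{E}e^{-tW}$. Part (ii) is then immediate: the quenched bound in Theorem~\ref{MDT3.1}(ii) has constants $C_1,C_2,\gamma$ independent of $\xi$, so integrating $\phi(t)=\mathbb{E}\phi_\xi(t)$ over the environment preserves it.

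For the first inequality of (i), which requires only (H), I would revisit the iteration
\[
\phi_\xi(t)\le\phi_{T^n\xi}(t/\Pi_n)\prod_{j=0}^{n-1}\bigl[p_1(\xi_j)+(1-p_1(\xi_j))\phi_{T^n\xi}(t/\Pi_n)\bigr]
\]
from the proof of Theorem~\ref{MDT3.1}(i), replace both occurrences of $\phi_{T^n\xi}(t/\Pi_n)$ by $\beta<1$ whenever $t\ge A^n/K$ (possible by Lemma~\ref{MDL3.3} and $\Pi_n\le A^n$), and take annealed expectation. Because the $\xi_j$ are i.i.d., the expectation factors to give
\[
\phi(t)\le\beta\bigl(\mathbb{E}[p_1+(1-p_1)\beta]\bigr)^n=\beta\tilde\alpha^n,\qquad\tilde\alpha:=\mathbb{E}p_1+\beta(1-\mathbb{E}p_1).
\]
Under (H) one has $m_0\ge A_1>1$ a.s., hence $p_1<1$ a.s., whence $\mathbb{E}p_1<1$ and $\tilde\alpha<1$. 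Choosing $n=\lfloor\log(Kt)/\log A\rfloor$ converts the geometric decay in $n$ into $\phi(t)\le Ct^{-a}$ with $a=-\log\tilde\alpha/\log A>0$.

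For the sharper second statement of (i), I would bootstrap to reach any $a$ with $\eta(a):=\mathbb{E}p_1m_0^a<1$; the assumption $\|p_1\|_\infty<1$ enters through the uniform quenched bound $\phi_\xi(s)\le C_0s^{-a_0}$ a.s.\ from Theorem~\ref{MDT3.1}(i). Using $f_0(s)\le p_1s+(1-p_1)s^2$ on $[0,1]$ together with the identity $\mathbb{E}[\phi_{T\xi}(s)\mid\xi_0]=\phi(s)$ (which holds because $T\xi\perp\xi_0$ and $T\xi\stackrel{d}{=}\xi$), the functional equation produces
\[
\phi(t)\le\mathbb{E}[p_1\phi(t/m_0)]+\mathbb{E}\bigl[(1-p_1)\phi_{T\xi}(t/m_0)^2\bigr].
\]
I would then upgrade the quadratic term via $\phi_{T\xi}(s)^2\le C_0s^{-a_0}\phi_{T\xi}(s)$ and, assuming the inductive bound $\phi(s)\le Ks^{-b}$ for $s$ large, obtain
\[
\phi(t)\le K t^{-b}\eta(b)+C_0 K t^{-(a_0+b)}\mathbb{E}[(1-p_1)m_0^{a_0+b}].
\]
If $\eta(b)<1$, then for every $\eta'\in(\eta(b),1)$ the remainder is absorbed for $t$ sufficiently large, giving the contraction $\phi(t)\le K\eta't^{-b}$. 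Iterating this contraction $n$ times and choosing $n$ of order $\log t/\log A$ converts the geometric gain into an extra polynomial factor $t^{\log\eta'/\log A}$, upgrading the exponent from $b$ to $b-\log\eta'/\log A>b$. Starting from the trivial bound $\phi\le 1$ (i.e.\ $b_0=0$) and iterating along a sequence $0=b_0<b_1<\cdots$ with $b_{k+1}=b_k-\log\eta(b_k)/\log A$, one reaches any $a$ with $\eta(a)<1$ in finitely many steps, since $b\mapsto\eta(b)$ is continuous and stays bounded away from $1$ on the compact interval $[0,a]$; Lemma~\ref{MDL3.2} then yields the bounds on $\mathbb{P}(W\le x)$ and $\mathbb{E}W^{-s}$ for $s\in(0,a)$.

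The main obstacle is running this bootstrap cleanly. Without the uniform quenched decay $\phi_\xi(s)\le C_0s^{-a_0}$, the remainder in the recursion would carry no decay in $t$, and a naive annealed expansion of $\prod_j[p_1(\xi_j)+(1-p_1(\xi_j))\phi_{T^n\xi}(t/\Pi_n)]$ would be controlled only by $(\mathbb{E}m_0^b)^n>1$, destroying any polynomial gain. The quenched bound supplies exactly the missing pointwise factor $(t/\Pi_n)^{-a_0}$ that isolates the sharp constant $\eta(b)=\mathbb{E}p_1m_0^b$, and this asymmetry is precisely why the second statement of (i) needs $\|p_1\|_\infty<1$ while the first does not. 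One must also verify that the iteration thresholds remain bounded through the finite bootstrap chain, which holds because $a_0$ is fixed and only finitely many exponents $b$ are visited.
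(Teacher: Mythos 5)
Your argument is correct, and for part (i) it follows a genuinely different route from the paper's. Part (ii) is handled exactly as in the paper. For the first statement of (i), the paper takes expectation in the \emph{one-step} recursion $\phi_\xi(t)\le(p_1+(1-p_1)\beta)\phi_{T\xi}(t/m_0)$ to obtain $\phi(t)\le p_\beta\,\mathbb{E}\phi(\tilde A_\beta t)$ and then invokes Lemma~\ref{MDL3.4}; you instead iterate the quenched inequality (\ref{MDE3.7}) to depth $n$, take annealed expectation using independence of the $\xi_j$, and choose $n\sim\log(Kt)/\log A$ by hand. The two are functionally equivalent (Lemma~\ref{MDL3.4} encapsulates exactly this geometric iteration), so this is a presentational difference. (Minor slip: you say ``replace both occurrences'' of $\phi_{T^n\xi}(t/\Pi_n)$, but for $n\ge 2$ there are $n+1$ occurrences; the argument is of course unchanged.) For the second, sharper statement of (i), the routes diverge more substantially. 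The paper's mechanism is to use the uniform quenched bound $\phi_\xi(t)\le C_1t^{-a_1}$ (available because $\|p_1\|_\infty<1$) to force $\phi_\xi(t)\le\epsilon$ a.s.\ for $t\ge t_\epsilon$, feed this into the one-step recursion to get $\phi(t)\le p_\epsilon\,\mathbb{E}\phi(\tilde A_\epsilon t)$, apply Lemma~\ref{MDL3.4}, and then send $\epsilon\to 0$ so that the governing constant $\mathbb{E}[(p_1+(1-p_1)\epsilon)m_0^a]$ tends to $\mathbb{E}p_1m_0^a<1$. Your bootstrap instead feeds the quenched decay into the \emph{quadratic} remainder of $f_0(s)\le p_1 s+(1-p_1)s^2$, producing a lower-order term $O(t^{-(a_0+b)})$ that can be absorbed, and then climbs a finite chain of exponents $0=b_0<b_1<\cdots$ up to $a$. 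Both are correct; the paper's argument is shorter because Lemma~\ref{MDL3.4} does the exponent-climbing in a single stroke, whereas yours is self-contained and makes the role of the quenched bound (isolating $\eta(b)=\mathbb{E}p_1m_0^b$ as the effective contraction constant) quite transparent. Your closing remarks on why $\|p_1\|_\infty<1$ is needed for the sharp constant and on the boundedness of thresholds along the finite chain are exactly the right points to verify; they hold as you state.
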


Notice that when $\|p_1\|_\infty<1$, the conclusion that (\ref{MDET3.2.1}) holds for some $a>0$ is also a
direct consequence of Theorem \ref{MDT3.1}(i). But Theorem \ref{MDT3.2}(i) gives more precise information.
\\*

To prove Theorem \ref{MDT3.2}, we need the following lemma.

\begin{lem}[\cite{liu2}, Lemma 3.2]
\label{MDL3.4} Let
$\phi:\mathbb{R}_+\rightarrow \mathbb{R}_+$ be a bounded function
and let $A$ be a positive random variable such that for some
$0<p<1$, $t_0\geq0$ and all $t>t_0$,
\begin{equation*}
\phi(t)\leq p\mathbb{E}\phi(At).
\end{equation*}
If $pEA^{-a}<1$ for some $0<a<\infty$, then $\phi(t)=O(t^{-a})(t\rightarrow\infty)$.
\end{lem}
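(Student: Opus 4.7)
The plan is to iterate the functional inequality, capturing the contribution of the ``bad'' region $\{At \leq t_0\}$ via the boundedness of $\phi$, and then control the iterates by a Markov-type estimate using the hypothesis $p\mathbb{E}A^{-a}<1$.

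First I would replace the conditional inequality by a universal one. Set $M=\sup_{u\geq 0}\phi(u)<\infty$. For $u>t_0$ the hypothesis gives $\phi(u)\leq p\mathbb{E}\phi(Au)$, while for $u\leq t_0$ we trivially have $\phi(u)\leq M$. Combining these, for \emph{every} $u\geq 0$,
\begin{equation*}
\phi(u)\;\leq\; p\,\mathbb{E}\phi(Au)\;+\;M\,\mathbf{1}_{\{u\leq t_0\}}.
\end{equation*}
This is the key step, because the resulting bound no longer has a side condition and hence can be safely substituted into itself inside an expectation.

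Next I would iterate. Let $A_1,A_2,\ldots$ be i.i.d.\ copies of $A$ and set $P_0=1$, $P_k=A_1\cdots A_k$. Applying the universal bound to $u=A_1 t$, taking expectation, and repeating $n$ times, an easy induction yields, for all $t\geq 0$,
\begin{equation*}
\phi(t)\;\leq\;p^{n}\,\mathbb{E}\phi(P_n t)\;+\;M\sum_{k=0}^{n-1}p^{k}\,\mathbb{P}(P_k t\leq t_0).
\end{equation*}
The first term is bounded by $Mp^{n}$ and hence vanishes as $n\to\infty$. For each term in the sum, Markov's inequality together with the independence of the $A_i$'s gives
\begin{equation*}
\mathbb{P}(P_k t\leq t_0)=\mathbb{P}\bigl(P_k^{-a}\geq (t/t_0)^{a}\bigr)\;\leq\;(t_0/t)^{a}\,(\mathbb{E}A^{-a})^{k}.
\end{equation*}
Substituting and summing, the $k$-th term is at most $M(t_0/t)^{a}q^{k}$ with $q:=p\,\mathbb{E}A^{-a}<1$. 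Letting $n\to\infty$ produces
\begin{equation*}
\phi(t)\;\leq\;\frac{M\,t_0^{a}}{1-q}\,t^{-a},\qquad t\to\infty,
\end{equation*}
which is exactly $\phi(t)=O(t^{-a})$.

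The one point that requires care is the legitimacy of the iteration: one must justify substituting the universal bound inside the expectation $\mathbb{E}\phi(At)$, which works precisely because the bound holds for \emph{all} $u$ (not only $u>t_0$) once the boundary indicator is included. After that, the two remaining ingredients are routine: boundedness of $\phi$ (to kill the leading $p^n$ term), and Markov's inequality applied to $P_k^{-a}$ (to turn the hypothesis $p\mathbb{E}A^{-a}<1$ into a convergent geometric series). I do not foresee any serious obstacle beyond setting up this universal bound cleanly.
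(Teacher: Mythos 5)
The paper does not actually prove this lemma; it is quoted from \cite{liu2} (Lemma 3.2) without proof, so there is no in-paper argument to compare against. Your proposal is a correct, self-contained proof: the universal bound $\phi(u)\le p\,\mathbb{E}\phi(Au)+M\mathbf{1}_{\{u\le t_0\}}$ is exactly what makes the iteration legitimate, the induction giving $\phi(t)\le p^{n}\mathbb{E}\phi(P_nt)+M\sum_{k=0}^{n-1}p^{k}\mathbb{P}(P_kt\le t_0)$ is sound, and Markov's inequality applied to $P_k^{-a}$ converts the hypothesis $p\,\mathbb{E}A^{-a}<1$ into a convergent geometric series, yielding the explicit bound $\phi(t)\le M t_0^{a}(1-q)^{-1}t^{-a}$ (the case $t_0=0$ being trivial since the sum then vanishes). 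This is essentially the standard iteration argument behind the cited result, so nothing further is needed.
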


\begin{proof}[Proof of Theorem \ref{MDT3.2}]
Part (ii) is from Theorem \ref{MDT3.1}(ii) by taking the expectation $\mathbb{E}$.
For  part (i), we first consider the special case where $p_1\leq\bar{p}_1$ a.s. for some constant $\bar{p}_1<1$. By Theorem
\ref{MDT3.1}(i), we have $\phi_\xi(t)\leq
C_1t^{-a_1}\;a.s.\,(\forall t>0)$ for some positive constants $C_1$
and $a_1$. So for all $0<\epsilon<1$, there exists a constant
$t_\epsilon>0$ such that $\phi_\xi(t)\leq \epsilon$ a.s. for
$t\geq t_\epsilon$. Thus by (\ref{MDE3.2}),
\begin{equation}\label{MDE3.8}
\phi_\xi(t)\leq
(p_1+(1-p_1)\epsilon)\phi_{T\xi}(\frac{t}{m_0})\quad a.s.\quad
\text{if}\quad t\geq At_\epsilon.
\end{equation}
Notice that $\xi_0$ is independent of $T\xi$.
Taking the expectation in (\ref{MDE3.8}), we see that for $ t\geq
At_\epsilon$,
\begin{eqnarray*}
\phi(t)&\leq& \mathbb{E}\left[(p_1+(1-p_1)\epsilon)\phi_{T\xi}(\frac{t}{m_0})\right]\\
&=& \mathbb{E}\left[(p_1+(1-p_1)\epsilon)\mathbb{E}\left[\left.\phi_{T\xi}(\frac{t}{m_0})\right|\xi_0\right]\right]\\
&=& \mathbb{E}\left[(p_1+(1-p_1)\epsilon)\phi(\frac{t}{m_0})\right]=p_\epsilon \mathbb{E}\phi(\tilde{A}_\epsilon t),
\end{eqnarray*}
where $p_\epsilon=\mathbb{E}(p_1+(1-p_1)\epsilon)<1$ and
$\tilde{A}_\epsilon$ is a positive random variable whose
distribution is determined by
$$\mathbb{E}g(\tilde{A}_\epsilon)=\frac{1}{p_\epsilon}\mathbb{E}\left[(p_1+(1-p_1)\epsilon)g(\frac{1}{m_0})\right]$$
for all bounded and measurable function $g$. If $p_\epsilon
\mathbb{E}\tilde{A}_\epsilon^{-a}<1$, by Lemma \ref{MDL3.4}, we have
$\phi(t)=O(t^{-a}) (t\rightarrow\infty)$, or equivalently,
$\phi(t)\leq Ct^{-a}(\forall t>0)$ for some constant $C>0$. Since
$\mathbb{E}p_1m_0^a<1$, we can take $\epsilon>0$ small enough such that
$$p_\epsilon
\mathbb{E}\tilde{A}_\epsilon^{-a}=\mathbb{E}\left[(p_1+(1-p_1)\epsilon)m_0^a\right]<1.$$
Therefore we have proved that $\phi(t)=O(t^{-a})$ whenever $\|p_1\|_\infty<1$ and $\mathbb{E}p_1m_0^a<1 (a>0)$.
Now consider the general case where $\|p_1\|_\infty$ may be $1$. By Lemma \ref{MDL3.3}, we have
$\phi_\xi(t)\leq \beta$ a.s. for $t\geq t_\beta=\frac{1}{K}$. So we can
repeat the proof above with $\beta$ in place of $\epsilon$, showing that if $a>0$ small enough such that
$$\mathbb{E}[(p_1+(1-p_1)\beta)m_0^a]\leq A^a(\mathbb{E}p_1+(1-\mathbb{E}p_1)\beta)<1,$$
then $\phi(t)=O(t^{-a})$.   Now we have proved the results about $\phi(t)$. By
Lemma \ref{MDL3.2}, we  obtain  the
results about $\mathbb{P}(W\leq x)$ and $\mathbb{E}W^{-s}$.
\end{proof}

We now prove our main result on the harmonic moments of $W$ already
stated in the introduction at the beginning of this paper .

\begin{proof}[Proof of Theorem \ref{MDT1.3}]
If $\mathbb{E}p_1m_0^a<1$, then there exists $\epsilon>0$ such that
$\mathbb{E}p_1m_0^{a+\epsilon}<1$. So by Theorem \ref{MDT3.2}(i),
$\mathbb{E}W^{-a}<\infty$. Conversely, assume that $a>0$ and $\mathbb{E}W^{-a}<\infty$. Notice that
$$W=\frac{1}{m_0}\sum_{i=1}^{Z_1}W_i^{(1)}\qquad a.s.,$$
where $\left(W_i^{(1)}\right)_{i\geq1}$, when $\xi$ is given, are
conditionally independent copies of $W^{(1)}$ whose distribution is
$\mathbb{P}_\xi(W^{(1)}\in \cdot)=\mathbb{P}_{T\xi}(W\in\cdot)$. Since
$\mathbb{P}(Z_1\geq2)>0$, we have
$$\mathbb{E}W^{-a}>\mathbb{E}m_0^a\left(W_1^{(1)}\right)^{-a}\mathbf{1}_{\{Z_1=1\}}=\mathbb{E}p_1m_0^a\mathbb{E}W^{-a}.$$
Therefore $\mathbb{E}p_1m_0^a<1$.
\end{proof}

\section{Moments of $Z_n$ and large deviations for $\log Z_n$}\label{MDS4}
We first recall some preliminary  results for the existence of moments of $W$.

Guivarc'h \& Liu \cite{liu1} gave a sufficient and necessary condition for the existence of moments of positive orders of $W$: for $s>1$,
\begin{equation}\label{MDEMa}
0<\mathbb{E}W^s<\infty\quad\text{if and only if }\quad \mathbb{E}\left(\frac{Z_1}{m_0}
\right)^s<\infty \;\text{and}\; \mathbb{E}m_0^{1-s}<1.
\end{equation}
In particular, if $p_0=0$ a.s. and $\mathbb{E}Z_1^s<\infty$ for all $s>1$, then
$0<\mathbb{E}W^s<\infty$ for all $s>0$.

For the existence of moments of negative orders of $W$, Theorem \ref {MDT1.3} shows that,  assuming (H) and $\|p_1\|_\infty<1$,
we have for $s>0$,
\begin{equation}\label{MDEMb}
\mathbb{E}W^{-s}<\infty\quad\text{if and only if }\quad \mathbb{E}p_1m_0^s<1.
\end{equation}
In particular, if $p_0=p_1=0$ a.s., it is clear that $\mathbb{E}W^{-s}<\infty$, for all $s>0$.

These results will be applied in the proof of Theorem \ref{MDT5.3}.

\begin{proof}[Proof of Theorem \ref{MDT5.3}]
Denote the distribution of $\xi_0$ by $\tau_0$. Fix $t\in\mathbb{R}$ and define a new distribution $\tilde{\tau}_0$ as
$$\tilde{\tau}_0(dx)=\frac{m(x)^t\tau_0(dx)}{\mathbb{E}m_0^t},$$
where $m(x)=\mathbb{E}[Z_1|\xi_0=x]=\sum_{i=0}^{\infty}ip_i(x)$.  Consider the new branching process in a random environment whose environment
distribution
is $\tilde{\tau}=\tilde{\tau}_0^{\otimes \mathbb{N}}$ instead of $\tau=\tau_0^{\otimes \mathbb{N}}$. The corresponding probability and
expectation are denoted by
$\tilde{\mathbb{P}}=\mathbb{P}_\xi \otimes \tilde \tau$ and $\mathbb{\tilde{E}}$, respectively. Then
$$\frac{\mathbb{E}Z_n^t}{\left(\mathbb{E}m_0^t\right)^n}=\tilde{\mathbb{E}}W_n^t.$$
It is easy to see that under $\tilde{\mathbb{P}}$, we still have $p_0=0$ a.s.. Moreover, if (H) holds and $\|p_1\|_\infty<1$, then the same hold under $\tilde{\mathbb{P}}$.   Notice that
$$\tilde{\mathbb{E}}\log m_0=\frac{\mathbb{E}m_0^t\log m_0}{\mathbb{E}m_0^t}\in(0,\infty].$$
We distinguish three cases as considered  in the theorem.

 (i) If $t\in(0,1]$ and $\mathbb{E}m_0^{t-1}Z_1\log^+Z_1<\infty$, then
$$\tilde{\mathbb{E}}\frac{Z_1}{m_0}\log^+Z_1=\frac{\mathbb{E}m_0^{t-1}Z_1\log^+Z_1}{\mathbb{E}m_0^t}<\infty,$$
so that $W_n\rightarrow W$ in $L^1$ under $\tilde{\mathbb{P}}$ (cf.  Athreya \& Karlin (1971) or Tanny (1988)). Therefore,
\begin{equation}\label{MDEP1.4}
\lim_{n\rightarrow\infty}\tilde{\mathbb{E}}W_n^t=\tilde{\mathbb{E}}W^t\in(0,\infty).
\end{equation}

(ii) If $t>1$ and $\mathbb{E}Z_1^t<\infty$, then
$$\tilde{\mathbb{E}}\left(\frac{Z_1}{m_0}\right)^t=\frac{\mathbb{E}Z_1^t}{\mathbb{E}m_0^t}<\infty  \quad a.s. \quad\text{under $\tilde{\mathbb{P}}$,}$$
so that  $W_n\rightarrow W$ in $L^t$ under $\tilde{\mathbb{P}}$ (cf. (\ref{MDEMa})).

(iii) If $t<0$, $\mathbb{E}p_1<\mathbb{E}m_0^t$, $\|p_1\|_\infty<1$ and (H) holds, then
$$\tilde{\mathbb{E}}p_1m_0^{-t}=\frac{\mathbb{E}p_1}{\mathbb{E}m_0^t}<1,$$
so that  $\tilde{\mathbb{E}}W^t<\infty$ from Theorem \ref{MDT1.3}. Using Lemma \ref{MDL3.1}, we obtain again (\ref{MDEP1.4}).

Therefore we have proved  Theorem \ref{MDT5.3} with $C(t)=\tilde{\mathbb{E}}W^t$.
 \end{proof}

Using Theorem \ref{MDT5.3}, we can  easily prove Theorem \ref{MDT5.2}.

\begin{proof}[Proof of Theorem \ref{MDT5.2}]

It is clear that the hypothesis of Theorem \ref{MDT5.2} ensures that $\mathbb{E}Z_1^t<\infty$ for all $t\in \mathbb{R}$. Hence by
Theorem \ref{MDT5.3},
$$\lim_{n\rightarrow\infty}\frac{\mathbb{E}Z_n^t}{\left(\mathbb{E}m_0^t\right)^n}=C(t)\in(0,\infty)\qquad \forall t \in \mathbb{R},$$
which implies that
\begin{equation}\label{MDE5.7}
\lim_{n\rightarrow\infty}\frac{1}{n}\log \mathbb{E}Z_n^t=\log \mathbb{E}m_0^t=\Lambda (t)\qquad \forall t \in \mathbb{R}.
\end{equation}
 Notice that the Laplace transform of $\log Z_n$ is $\mathbb{E}e^{t\log Z_n}=\mathbb{E}Z_n^t$. As $\Lambda(t)$ is finite and derivable everywhere,
  from (\ref{MDE5.7}) and the G\"artner-Ellis theorem (\cite{z}, p.52, Exercise 2.3.20),
we immediately obtain Theorem \ref{MDT5.2}.
\end{proof}

Theorem \ref{MDT5.3} can also be used to study the large deviation probabilities $\mathbb{P}\left(\frac{\log Z_n}{n}\geq x\right)$ (resp. $\mathbb{P}\left(\frac{\log Z_n}{n}\leq x\right)$)
for a finite interval of $x$, when $\mathbb{E}W^{a}$ (resp. $\mathbb{E}W^{-a}$) ($a>0$) exists only in a finite interval of $a$. To this end we shall use the following version of the G\"artner-Ellis theorem adapted to the study of tail probabilities.

\begin{lem}[\cite{liu4}, Theorem 6.1]\label{MDL5.3}
Let $(\mu_n)$ be a family of probability distribution on $\mathbb{R}$ and
let $(a_n)$ be a sequence of positive numbers satisfying $a_n\rightarrow\infty$.  Assume that for some $t_0\in[0,\infty]$ and for every $t\in[0,t_0)$, as $n\rightarrow\infty$,
$$l_n(t):=\frac{1}{a_n}\log \int e^{a_ntx}\mu_n(dx)\rightarrow l(t)<\infty.$$
For $x\in\mathbb{R}$, set
$$l^*(x)=\sup\{tx-l(t); t\in[0,t_0)\}.$$
If $l$ is continuously differentiable on $(0,t_0)$, then for all $x\in(l'(0+), l'(t_0-))$ (where $l'(x\pm)=\lim_{y\rightarrow x\pm}l'(y)$),
$$\lim_{n\rightarrow\infty}\frac{1}{a_n}\log \mu_n([x, \infty))=-l^*(x).$$
\end{lem}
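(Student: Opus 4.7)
The plan is to adapt the classical proof of the G\"artner--Ellis theorem (cf.\ \cite{z}) to this one-sided tail setting, by proving matching upper and lower bounds for $\frac{1}{a_n}\log \mu_n([x, \infty))$. Since $l$ is convex (as a pointwise limit of cumulant generating functions) and continuously differentiable on $(0, t_0)$, for $x \in (l'(0+), l'(t_0-))$ there exists $t_x \in (0, t_0)$ with $l'(t_x) = x$, and the supremum in the definition of $l^*(x)$ is attained at $t_x$, so $l^*(x) = t_x x - l(t_x)$.

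The upper bound is a direct application of Markov's inequality: for every $t \in [0, t_0)$,
$$\mu_n([x, \infty)) \leq e^{-a_n t x}\int e^{a_n t y}\,\mu_n(dy) = \exp\bigl(-a_n(tx - l_n(t))\bigr).$$
Dividing by $a_n$, taking logarithms, letting $n \to \infty$, and then optimizing over $t \in [0, t_0)$ gives $\limsup_n \frac{1}{a_n}\log \mu_n([x, \infty)) \leq -l^*(x)$.

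The lower bound, which is the main obstacle, uses an exponential change of measure. For $t \in (0, t_0)$, introduce the tilted probability
$$\tilde\mu_n^{(t)}(dy) = \frac{e^{a_n t y}}{\int e^{a_n t z}\,\mu_n(dz)}\,\mu_n(dy),$$
so that for every $\delta > 0$,
$$\mu_n([x, x+\delta]) \geq \exp\bigl(-a_n(t(x+\delta) - l_n(t))\bigr)\,\tilde\mu_n^{(t)}([x, x+\delta]).$$
I would tilt at $t = t_x + \epsilon$ for a small $\epsilon > 0$ (this is the key trick): the log-MGF of $\tilde\mu_n^{(t)}$ at parameter $s$ equals $l_n(t+s) - l_n(t)$, which converges to the smooth convex function $l(t+s) - l(t)$, whose derivative at $s = 0$ is $l'(t_x + \epsilon)$. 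Choosing $\epsilon$ so that $l'(t_x + \epsilon) \in (x, x+\delta)$, the upper bound proved in the previous paragraph, applied to $\tilde\mu_n^{(t)}$, shows that both tails $\tilde\mu_n^{(t)}((-\infty, x])$ and $\tilde\mu_n^{(t)}([x+\delta, \infty))$ decay exponentially in $a_n$, so $\tilde\mu_n^{(t)}([x, x+\delta]) \to 1$. Substituting back gives
$$\liminf_n \frac{1}{a_n}\log \mu_n([x, \infty)) \geq -(t(x+\delta) - l(t)),$$
and letting $\delta \to 0$ and $\epsilon \to 0$, using continuity of $l'$ at $t_x$, yields the matching lower bound $-l^*(x)$. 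The delicate point is the auxiliary shift from $t_x$ to $t_x + \epsilon$: tilting exactly at $t_x$ would only concentrate the tilted mass around $x$ with no control on how much of it lies in the right half-neighborhood $[x, x+\delta]$, and the hypothesis that $l$ is continuously differentiable on an open interval containing $t_x$ is precisely what makes this shift possible.
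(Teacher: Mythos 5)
The paper does not prove this lemma at all: it is quoted verbatim from Liu and Rouault (2000, Theorem 6.1), so there is no in-text argument to compare yours against. Your sketch is the standard adaptation of the G\"artner--Ellis machinery (Chernoff upper bound plus exponential tilting) to one-sided tail probabilities, and it is sound; in particular you correctly identify that tilting \emph{exactly} at $t_x$ is insufficient and that the small shift to $t_x+\epsilon$ is the key device. Two points are worth tightening. First, the claim that both tails of $\tilde\mu_n^{(t)}$ decay exponentially ``by the upper bound proved in the previous paragraph'' is slightly loose: that bound controls only the right tail, and the left tail $\tilde\mu_n^{(t)}((-\infty,x])$ needs the mirror Chernoff bound with a negative tilting parameter $s$; this is legitimate precisely because $t=t_x+\epsilon>0$, so that $l_n(t+s)\to l(t+s)$ on a two-sided neighborhood of $s=0$, but it should be said. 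Second, ``letting $\delta\to0$ and $\epsilon\to0$'' implicitly assumes $l'$ is strictly increasing at $t_x$; if $l'$ is constant equal to $x$ on an interval $[t_x,t_x+\epsilon'']$, you must instead let $\epsilon\downarrow\epsilon''$, and the resulting limit $(t_x+\epsilon'')x-l(t_x+\epsilon'')$ still equals $l^*(x)$ because $l$ is affine with slope $x$ there. Neither point is a genuine obstruction, and the overall scheme proves the lemma.
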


From  Theorem \ref{MDT5.3} and Lemma \ref{MDL5.3}, we immediately obtain the following theorem.

\begin{thm}\label{MDT5.4}
Let $a\in\mathbb{R}$.
\begin{itemize}
\item[](i) Let $a>0$. If $a\in(0,1]$ and $\mathbb{E}m_0^{a-1}Z_1\log^+Z_1<\infty$, or $a>1$ and $\mathbb{E}Z_1^a<\infty$, then
\begin{equation}\label{MDEL4.1a}
\lim_{n\rightarrow\infty}\frac{1}{n}\log \mathbb{P}\left(\frac{\log Z_n}{n}\geq x\right)=-\Lambda^*(x),
\quad \forall x\in(\mathbb{E}\log m_0, \Lambda'(a)).
\end{equation}

\item[](ii) Let  $a<0$. Assume  (H) and $\|p_1\|_\infty<1$.  If $\mathbb{E}p_1<\mathbb{E}m_0^a$, then
\begin{equation}\label{MDEL4.1b}
\lim_{n\rightarrow\infty}\frac{1}{n}\log \mathbb{P}\left(\frac{\log Z_n}{n}\leq x\right)=-\Lambda^*(x), \quad \forall x\in(\Lambda'(a), \mathbb{E}\log m_0).
\end{equation}
\end{itemize}
\end{thm}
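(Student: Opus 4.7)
The plan is to deduce Theorem \ref{MDT5.4} from Theorem \ref{MDT5.3} via the tail version of the G\"artner--Ellis theorem stated in Lemma \ref{MDL5.3}. First I would verify that the hypothesis of Theorem \ref{MDT5.3} holds uniformly for every exponent in the relevant interval, not merely at the endpoint. In part (i) this means $\mathbb{E}m_0^{t-1}Z_1\log^{+}Z_1<\infty$ for $t\in(0,\min(a,1)]$ and $\mathbb{E}Z_1^t<\infty$ for $t\in(1,a]$ when $a>1$. Since $p_0=0$ forces $Z_1\geq1$ and hence $m_0=\mathbb{E}_\xi Z_1\geq1$ a.s., the map $x\mapsto m_0^x$ is non-decreasing, so $m_0^{t-1}\leq m_0^{a-1}$ whenever $t\leq a$; the case $t>1$ is controlled by the trivial bound $Z_1^t\leq 1+Z_1^a$. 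In part (ii), assumption (H) gives $\log m_0>0$ a.s., so $\Lambda'(t)=\mathbb{E}[m_0^t\log m_0]/\mathbb{E}m_0^t>0$ and $t\mapsto\mathbb{E}m_0^t$ is strictly increasing; hence $\mathbb{E}p_1<\mathbb{E}m_0^a\leq\mathbb{E}m_0^t$ for every $t\in[a,0]$, and condition (iii) of Theorem \ref{MDT5.3} applies throughout.

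Having this, Theorem \ref{MDT5.3} yields $\frac{1}{n}\log\mathbb{E}Z_n^t\to\Lambda(t)$ as $n\to\infty$ for every $t$ in the relevant interval. For part (i), let $\mu_n$ be the distribution of $(\log Z_n)/n$, so that $l_n(t)=\frac{1}{n}\log\int e^{ntx}\mu_n(dx)=\frac{1}{n}\log\mathbb{E}Z_n^t$ converges to $\Lambda(t)$ on $[0,a]$. The function $\Lambda$ is finite on $[0,a]$ (using Jensen to pass from $\mathbb{E}Z_1^a<\infty$ to $\mathbb{E}m_0^a<\infty$ when $a>1$, and observing $\mathbb{E}m_0^{a-1}Z_1=\mathbb{E}m_0^a<\infty$ when $a\leq1$), so it is convex and in fact $C^\infty$ on the interior $(0,a)$, with a well-defined left derivative at $a$. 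Applying Lemma \ref{MDL5.3} with $t_0=a$, $a_n=n$ and $l=\Lambda$ gives $\lim \frac{1}{n}\log\mathbb{P}(\log Z_n/n\geq x)=-\Lambda^*(x)$ for $x\in(\Lambda'(0+),\Lambda'(a-))=(\mathbb{E}\log m_0,\Lambda'(a))$, which is exactly (\ref{MDEL4.1a}).

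For part (ii), I would reduce to an upper-tail statement by letting $\nu_n$ be the law of $-(\log Z_n)/n$. For $s\geq0$ one has
$$\tilde l_n(s)=\frac{1}{n}\log\int e^{nsx}\nu_n(dx)=\frac{1}{n}\log\mathbb{E}Z_n^{-s}\longrightarrow\Lambda(-s)=:\tilde l(s),$$
valid for $s\in[0,-a]$ by the uniform verification above and Theorem \ref{MDT5.3}(iii). The limit $\tilde l$ is convex and $C^\infty$ on $(0,-a)$ with $\tilde l'(s)=-\Lambda'(-s)$, so Lemma \ref{MDL5.3} gives $\lim\frac{1}{n}\log\mathbb{P}(\log Z_n/n\leq -y)=-\tilde l^*(y)$ for $y\in(-\mathbb{E}\log m_0,-\Lambda'(a))$. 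Putting $y=-x$ and $s=-t$ in the Legendre transform, $\tilde l^*(-x)=\sup_{t\in(a,0]}\{tx-\Lambda(t)\}$. For $x\in(\Lambda'(a),\mathbb{E}\log m_0)$, the unconstrained Legendre supremum defining $\Lambda^*(x)$ is attained at the unique $t^*\in(a,0)$ solving $\Lambda'(t^*)=x$, so the restricted and unrestricted suprema agree and equal $\Lambda^*(x)$, yielding (\ref{MDEL4.1b}).

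The main obstacle I anticipate is precisely the \emph{propagation} of the moment hypothesis from its endpoint statement in Theorem \ref{MDT5.3} to the whole interval of exponents needed by the G\"artner--Ellis type Lemma \ref{MDL5.3}: in case (ii) one leans on (H) to make $\Lambda$ strictly increasing, while in case (i) one really needs $p_0=0$ to make the pointwise monotonicity of $m_0^{t-1}$ work. A secondary technicality is checking that the restricted supremum produced by Lemma \ref{MDL5.3} coincides with the unrestricted $\Lambda^*(x)$; this follows from strict convexity of $\Lambda$ (so that the optimizer $t^*$ with $\Lambda'(t^*)=x$ is unique and lies in the prescribed interval for the given range of $x$).
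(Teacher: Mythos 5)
Your proposal follows exactly the route the paper indicates (and leaves implicit): combine Theorem~\ref{MDT5.3} with the tail Gärtner--Ellis statement of Lemma~\ref{MDL5.3}, after first checking that the moment hypotheses propagate to the whole interval of exponents. The paper's own ``proof'' is a single sentence (``From Theorem~\ref{MDT5.3} and Lemma~\ref{MDL5.3}, we immediately obtain\dots''), so your work of verifying the propagation, the finiteness and smoothness of $\Lambda$, and the identification of the restricted Legendre supremum with $\Lambda^*$ is a faithful unpacking of the same argument, including the reflection trick $s\mapsto -t$ for part~(ii).

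One small slip in case $a>1$: to cover $t\in(0,1]$ you invoke $m_0^{t-1}\leq m_0^{a-1}$, but the hypothesis supplied for $a>1$ is $\mathbb{E}Z_1^a<\infty$, not $\mathbb{E}m_0^{a-1}Z_1\log^+Z_1<\infty$, so the comparison at $m_0^{a-1}$ leads nowhere. The correct chain is the one already latent in your remarks: since $m_0\geq1$ and $t-1\leq 0$, one has $m_0^{t-1}\leq 1$, and $Z_1\log^+Z_1\leq C_a Z_1^a$, whence $\mathbb{E}m_0^{t-1}Z_1\log^+Z_1\leq C_a\mathbb{E}Z_1^a<\infty$. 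With that adjustment the verification of Theorem~\ref{MDT5.3}(i) on $(0,1]$ and of (ii) on $(1,a]$ is complete, and the rest of your argument goes through as written.
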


  If $\mathbb{E}Z_1^a<\infty$ for all $a>1$ (resp.  $p_1=0\;a.s.$), then Theorem \ref{MDT5.4} suggests that the limit in (\ref{MDEL4.1a}) (resp. (\ref{MDEL4.1b})) would hold for any
$x>\mathbb{E}\log m_0$ (resp. $x<\mathbb{E}\log m_0$). This leads to the following theorem  which is more precise than Corollary \ref{MDC1.1}. It was proved by Bansaye \& Berestycki \cite{ba} when (H) holds with $\delta=1$.

\begin{thm}
\label{MDT5.1} (i) If $\mathbb{E}Z_1^s<\infty$ for all $s>1$, then
\begin{equation*}
\lim_{n\rightarrow\infty}\frac{1}{n}\log \mathbb{P}\left(\frac{\log
Z_n}{n}\geq x\right)=-\Lambda^*(x)\quad \text{for}\;x> \mathbb{E}\log m_0.
\end{equation*}

(ii) Assume (H) and $p_1=0$ a.s., then

\begin{equation*}
\lim_{n\rightarrow\infty}\frac{1}{n}\log \mathbb{P}\left(\frac{\log
Z_n}{n}\leq x\right)=-\Lambda^*(x)\quad \text{for}\;x< \mathbb{E}\log
m_0,
\end{equation*}
\end{thm}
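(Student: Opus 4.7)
My plan is to derive Theorem \ref{MDT5.1} from Theorem \ref{MDT5.4} by letting the moment parameter tend to $\pm\infty$, and to cover the range where the Fenchel--Legendre transform is infinite by a direct Chernoff-type bound based on Theorem \ref{MDT5.3}.

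\textbf{Part (i).} The first step is to verify that the hypothesis of Theorem \ref{MDT5.4}(i) holds for every $a>0$. For $a>1$ this is immediate from the assumption $\mathbb{E}Z_1^s<\infty$ ($\forall s>1$). For $a\in(0,1]$ the running convention $p_0=0$ a.s.\ gives $m_0=\sum_{i\geq 1}ip_i(\xi_0)\geq 1$ a.s., hence $m_0^{a-1}\leq 1$ and
$$\mathbb{E}\,m_0^{a-1}Z_1\log^+ Z_1 \;\leq\; \mathbb{E}\,Z_1\log^+ Z_1 \;<\; \infty,$$
the last inequality following from $Z_1\log^+ Z_1\leq (s-1)^{-1}Z_1^s$ for any $s>1$. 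Thus Theorem \ref{MDT5.4}(i) delivers the desired limit on each $(\mathbb{E}\log m_0,\Lambda'(a))$, and these intervals exhaust $(\mathbb{E}\log m_0,\Lambda'(\infty))$ as $a\to\infty$. For $x>\Lambda'(\infty)$ (a range relevant only if $\Lambda'(\infty)$ is finite), where $\Lambda^*(x)=+\infty$ by convention, I invoke Chernoff: Theorem \ref{MDT5.3}(ii) supplies $\frac{1}{n}\log\mathbb{E}Z_n^t\to\Lambda(t)$ for every $t>1$, and Markov's inequality yields
$$\frac{1}{n}\log\mathbb{P}\left(\frac{\log Z_n}{n}\geq x\right) \;\leq\; \Lambda(t)-tx+o(1).$$
Letting $t\to\infty$ sends the right side to $-\infty$ because $\Lambda(t)/t\nearrow\Lambda'(\infty)<x$.

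\textbf{Part (ii).} The argument is symmetric. Under (H) and $p_1=0$ a.s.\ one has $\mathbb{E}p_1=0<\mathbb{E}m_0^a$ and $\|p_1\|_\infty=0<1$ for every $a<0$, so Theorem \ref{MDT5.4}(ii) applies uniformly in $a$, and its conclusion on $(\Lambda'(a),\mathbb{E}\log m_0)$ exhausts $(\Lambda'(-\infty),\mathbb{E}\log m_0)$ as $a\to-\infty$. Since (H) forces $m_0\in[A_1,A]$ a.s., we have $\Lambda'(-\infty)=\log(\mathrm{ess\,inf}\,m_0)\geq\log A_1>0$ finite, so the residual range $x<\Lambda'(-\infty)$ must be handled separately. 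Theorem \ref{MDT5.3}(iii) gives $\frac{1}{n}\log\mathbb{E}Z_n^t\to\Lambda(t)$ for every $t<0$, hence Markov (with $Z_n^t\geq e^{ntx}$ equivalent to $Z_n\leq e^{nx}$ for $t<0$) produces
$$\frac{1}{n}\log\mathbb{P}\left(\frac{\log Z_n}{n}\leq x\right) \;\leq\; \Lambda(t)-tx+o(1),$$
and sending $t\to-\infty$ makes this tend to $-\infty$ because $\Lambda(t)/t\searrow\Lambda'(-\infty)>x$ while $t$ is negative, so $\Lambda(t)-tx=t(\Lambda(t)/t-x)\to-\infty$.

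\textbf{Main obstacle.} The only nontrivial technical point is confirming that the hypotheses of Theorem \ref{MDT5.4} can be met uniformly in $a$; the subsequent limit passage together with the complementary Chernoff estimate is routine. A mild subtlety is the boundary point $x=\Lambda'(\pm\infty)$ when finite: if $m_0$ attains its essential supremum (respectively infimum) with positive probability, the Chernoff bound converges only to a finite negative limit rather than $-\infty$, and matching the excerpt's convention $\Lambda^*(x)=+\infty$ at the boundary then requires an additional monotonicity or continuity argument relying on the open-interval case already established.
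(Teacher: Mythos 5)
Your approach is genuinely different from the paper's. The paper itself remarks, after Theorem~\ref{MDT5.4}, that Theorem~\ref{MDT5.1} cannot be deduced directly from Theorem~\ref{MDT5.4} when $\Lambda'(\infty)<\infty$ or $\Lambda'(-\infty)>0$, and therefore gives a direct proof built on the decomposition $\log Z_n=\log\Pi_n+\log W_n$: Lemma~\ref{MDL5.1} (the constructive lower bound from Bansaye--Berestycki) together with Lemma~\ref{MDL5.2} (a Markov bound on $W_n^{\pm s}$ using the moments of $W$). You instead push Theorem~\ref{MDT5.4} to its limit by letting $a\to\pm\infty$ and patch the residual range with Chernoff bounds built on Theorem~\ref{MDT5.3}. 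Your verification of the hypotheses of Theorem~\ref{MDT5.4} for every $a$ is correct (using $m_0\geq1$ a.s.\ from $p_0=0$), and the Chernoff step is sound for $x$ strictly outside $[\Lambda'(-\infty),\Lambda'(\infty)]$.

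The genuine gap is at the boundary $x=\Lambda'(\infty)$ (resp.\ $x=\Lambda'(-\infty)$), which lies in the range covered by Theorem~\ref{MDT5.1} whenever it is finite; under (H) one always has $\Lambda'(-\infty)\geq\log A_1>0$ finite in part (ii), and $\Lambda'(\infty)\leq\log A$ finite there as well. Theorem~\ref{MDT5.4} is stated on an open interval and says nothing at the endpoint, and your Chernoff estimate yields only the upper bound $\limsup\leq-\Lambda^*(\Lambda'(\infty))$, with no matching lower bound. When $m_0$ attains its essential supremum with positive probability, this rate is finite, so the lower bound is a nontrivial assertion. Monotonicity of the tail probability in $x$ does give you the upper bound at the endpoint from the open-interval case, but it runs the wrong way for the lower bound, and continuity of $\Lambda^*$ does not supply it either; the remedy you sketch (``monotonicity or continuity argument relying on the open-interval case already established'') therefore does not close the gap. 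What is needed is a direct lower bound such as Lemma~\ref{MDL5.1}, which is exactly what the paper uses --- and is precisely why the authors chose not to take your route.
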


If $\Lambda'(\infty)=\infty$ and $\Lambda'(-\infty)=0$, then Theorem \ref{MDT5.1} can be directly deduced from Theorem \ref{MDT5.4}.  But it is possible that
$\Lambda'(\infty)<\infty $ or $\Lambda'(-\infty)>0$. So we will give a direct proof of Theorem \ref{MDT5.1}, following \cite{ba}.

According to the large deviation principle for i.i.d.  random
variables, we have
\begin{equation}
\lim_{n\rightarrow \infty }\frac{1}{n}\log \mathbb{P}\left( \frac{\log
\Pi_{n}}{n}\leq x\right) =-\Lambda ^{\ast }(x)\quad
\text{for}\;x\leq \mathbb{E}\log m_{0}, \label{MDE5.1}
\end{equation}
\begin{equation}
\lim_{n\rightarrow \infty }\frac{1}{n}\log \mathbb{P}\left( \frac{\log
\Pi_{n}}{n}\geq x\right) =-\Lambda ^{\ast }(x)\quad \text{for}\;x\geq
\mathbb{E}\log m_{0}. \label{MDE5.2}
\end{equation}

Lemma \ref{MDL5.1} below gives the lower bound for both the lower
and upper deviations.

\begin{lem}[\cite{ba}, Proposition 1]
\label{MDL5.1} Assume (\ref{MDE1.0}). Then
\begin{equation}  \label{MDE5.3}
\liminf_{n\rightarrow\infty}\frac{1}{n}\log \mathbb{P}\left(\frac{\log
Z_n}{n}\leq x\right)\geq-\Lambda^*(x)\quad \text{for}\;x\leq \mathbb{E}\log
m_0,
\end{equation}
\begin{equation}  \label{MDE5.4}
\liminf_{n\rightarrow\infty}\frac{1}{n}\log \mathbb{P}\left(\frac{\log
Z_n}{n}\geq x\right)\geq-\Lambda^*(x)\quad \text{for}\;x\geq \mathbb{E}\log
m_0.
\end{equation}
\end{lem}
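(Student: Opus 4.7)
The plan is to combine the decomposition $\log Z_n = \log \Pi_n + \log W_n$ from (\ref{MDE0}) with the classical Cram\'er bounds (\ref{MDE5.1})--(\ref{MDE5.2}) for the i.i.d.\ sum $\log \Pi_n = \sum_{k<n}\log m_k$: for each deviation event on $\log Z_n$ I would match it to a slightly enlarged deviation event on $\log \Pi_n$, absorbing the correction from $\log W_n$ as an $o(1)$-term on the exponential scale. The two cases (lower and upper) turn out to have different levels of difficulty because Markov's inequality only controls the upper tail of $W_n$.

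For the lower deviation $(x \leq \mathbb{E}\log m_0)$, fix $\epsilon > 0$ and use the inclusion
\[
\{\log \Pi_n / n \leq x - \epsilon\} \cap \{W_n \leq e^{n\epsilon}\} \subseteq \{\log Z_n / n \leq x\}.
\]
The event on $\log \Pi_n$ is $\sigma(\xi)$-measurable; conditionally on $\xi$, the identity $\mathbb{E}_\xi W_n = 1$ combined with Markov's inequality gives $\mathbb{P}_\xi(W_n > e^{n\epsilon}) \leq e^{-n\epsilon}$ a.s. Integrating in $\xi$,
\[
\mathbb{P}(\log Z_n / n \leq x) \geq (1 - e^{-n\epsilon})\,\mathbb{P}(\log \Pi_n / n \leq x - \epsilon),
\]
and (\ref{MDE5.1}) followed by $\epsilon \to 0^+$ (using continuity of $\Lambda^*$ at $x$ when $\Lambda^*(x) < \infty$; (\ref{MDE5.3}) is trivial otherwise) yields the bound.

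For the upper deviation $(x \geq \mathbb{E}\log m_0)$ the analogous Markov step is unavailable, since (\ref{MDE1.0}) gives no lower-tail control on $W_n$. Instead I would tilt the environment: for $t > 0$ to be chosen, define a probability $\tilde{\mathbb{P}}$ on $\mathcal{F}_n$ by
\[
\frac{d\tilde{\mathbb{P}}}{d\mathbb{P}} = \prod_{k=0}^{n-1}\frac{m_k^t}{\mathbb{E}m_0^t} = e^{t \log \Pi_n - n\Lambda(t)}.
\]
This tilts only the environment, so under $\tilde{\mathbb{P}}$ the sequence $(\log m_k)$ is still i.i.d.\ with mean $\Lambda'(t)$ and the offspring law given $\xi$ is unchanged. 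Choose $t$ so that $\Lambda'(t) = x + 2\epsilon$; by the LLN, $\log \Pi_n / n \to x + 2\epsilon$ in $\tilde{\mathbb{P}}$-probability, and Tanny's LLN applied to the tilted BPRE gives $\log Z_n / n \to x + 2\epsilon$, hence $\log W_n / n \to 0$ in $\tilde{\mathbb{P}}$-probability. On $B_n := \{|\log \Pi_n / n - (x+2\epsilon)| \leq \epsilon\} \cap \{\log W_n / n \geq -\epsilon\}$ one has both $\log Z_n / n \geq x$ and $e^{-t\log \Pi_n} \geq e^{-tn(x + 3\epsilon)}$, so
\[
\mathbb{P}(\log Z_n / n \geq x) = \tilde{\mathbb{E}}\bigl[e^{n\Lambda(t) - t\log \Pi_n}\mathbf{1}_{\log Z_n/n \geq x}\bigr] \geq e^{-n[\Lambda^*(x+2\epsilon) + t\epsilon]}\,\tilde{\mathbb{P}}(B_n),
\]
and since $\tilde{\mathbb{P}}(B_n) \to 1$, letting $n \to \infty$ and then $\epsilon \to 0^+$ yields (\ref{MDE5.4}) by continuity of $\Lambda$ and $\Lambda^*$.

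The main obstacle will be justifying Tanny's LLN for the tilted BPRE, which reduces to verifying the tilted integrability condition $\tilde{\mathbb{E}}[(Z_1/m_0)\log^+ Z_1] < \infty$. This is not automatic from (\ref{MDE1.0}) and may require either a mild additional moment hypothesis on $m_0$ or a preliminary truncation of $Z_1$ at level $M$ followed by $M \to \infty$; alternatively one may approximate $\log \Pi_n$ from the environment side and push the $W_n$ estimate through without invoking the tilted convergence of $\log Z_n / n$ directly.
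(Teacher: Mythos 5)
The paper does not prove this lemma; it cites it from Bansaye \& Berestycki (\cite{ba}, Proposition 1), so there is no internal proof to compare against. That said, your strategy is the standard one and, as far as I can tell, essentially the one used in \cite{ba}: control the $\log W_n$ correction directly for the lower deviation, and tilt the environment for the upper one.

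Your lower-deviation argument is complete and correct: the inclusion $\{\log\Pi_n/n\le x-\epsilon\}\cap\{W_n\le e^{n\epsilon}\}\subset\{\log Z_n/n\le x\}$, Markov via $\mathbb{E}_\xi W_n=1$, and continuity of $\Lambda^*$ on the interior of its effective domain (which contains $x$ whenever $\Lambda^*(x)<\infty$, by the explicit description of $\Lambda^*$ given just before (H)) close the argument. For the upper deviation, the change of measure, the algebra $n\Lambda(t)-t\log\Pi_n\ge -n[\Lambda^*(x+2\epsilon)+t\epsilon]$ on $B_n$, and the passage $\epsilon\to 0$ (with $t=t(\epsilon)$ bounded whenever $x<\Lambda'(\infty)$, the case $\Lambda^*(x)=\infty$ being trivial) are all correct. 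The only point you flag as a potential obstacle is whether $\tilde{\mathbb{P}}(B_n)\to 1$; but the concern you raise there is slightly off target. You do not need the tilted $Z\log Z$ condition $\tilde{\mathbb{E}}[(Z_1/m_0)\log^+Z_1]<\infty$, because you do not need $W>0$ a.s.\ under $\tilde{\mathbb{P}}$. What you need is only the Seneta--Heyde/Tanny statement that $\frac{1}{n}\log Z_n\to\tilde{\mathbb{E}}\log m_0$ a.s.\ on the survival set, and for a supercritical BPRE with $p_0=0$ and $\tilde{\mathbb{E}}|\log m_0|<\infty$ (the latter guaranteed since $\Lambda(t)$ is finite near $t$) this holds without the $Z\log Z$ hypothesis; the $Z\log Z$ condition only governs non-degeneracy of the martingale limit, not the exponential growth rate. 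So the gap you identified does not in fact require the truncation fallback, though your truncation plan would also work. One stylistic remark: since the tilted process is again a BPRE with i.i.d.\ environment and unchanged conditional offspring law, you should say explicitly that $\tilde{\mathbb{P}}$-a.s.\ convergence is inherited, so $\tilde{\mathbb{P}}(B_n)\to 1$ and not merely $\liminf\tilde{\mathbb{P}}(B_n)>0$.
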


We remark that in Lemma \ref{MDL5.1}, the original moment condition in (\cite{ba}, Proposition 1), namely, $\mathbb{E}\left(\frac{Z_1}{m_0}\right)^s<\infty$ for some $s>1$,  is weaken to $\mathbb{E}\frac{Z_1}{m_0}\log^+Z_1<\infty$.
\\*

The following lemma gives the upper bound for both the lower
and upper deviations.

\begin{lem}
\label{MDL5.2}  (i) If $\mathbb{E}W^{- s}<\infty $ for all $s>1$, then
\begin{equation}
\limsup_{n\rightarrow \infty }\frac{1}{n}\log \mathbb{P}\left( \frac{\log Z_{n}}{n}
\leq x\right) \leq -\Lambda ^{\ast }(x)\quad \text{for}\;x<
\mathbb{E}\log m_{0}. \label{MDE5.8}
\end{equation}

(ii) If $\mathbb{E}W^{s}<\infty $ for all $s>0$, then
\begin{equation}
\limsup_{n\rightarrow \infty }\frac{1}{n}\log \mathbb{P}\left( \frac{\log Z_{n}}{n}
\geq x\right) \leq -\Lambda ^{\ast }(x)\quad \text{for}\;x> \mathbb{E}\log
m_{0}. \label{MDE5.9}
\end{equation}
\end{lem}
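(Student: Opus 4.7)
The natural approach rests on the decomposition $\log Z_n = \log \Pi_n + \log W_n$ from (\ref{MDE0}). The sum $\log\Pi_n = \sum_{k=0}^{n-1}\log m_k$ involves i.i.d.\ summands, so Cram\'er's theorem governs its large deviations with rate function $\Lambda^*$, as already recorded in (\ref{MDE5.1})--(\ref{MDE5.2}). Meanwhile $W_n\to W$ a.s.\ with $W>0$, and Lemma \ref{MDL3.1} together with the moment hypothesis shows that $\log W_n$ has exponentially tame tails (lower tails for part~(i), upper tails for part~(ii)); hence it is negligible on the scale $n$.

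For part~(i), fix $\epsilon>0$ with $x+\epsilon<\mathbb{E}\log m_0$ and use the inclusion
\[
\{\log Z_n\le nx\} \subseteq \{\log\Pi_n\le n(x+\epsilon)\} \cup \{W_n\le e^{-n\epsilon}\}.
\]
By (\ref{MDE5.1}),
\[
\limsup_{n\to\infty}\frac{1}{n}\log\mathbb{P}(\log\Pi_n\le n(x+\epsilon)) \le -\Lambda^*(x+\epsilon).
\]
For the second event, Markov's inequality followed by Lemma \ref{MDL3.1} yields, for every $s>0$,
\[
\mathbb{P}(W_n\le e^{-n\epsilon}) \le e^{-ns\epsilon}\mathbb{E}W_n^{-s}\le e^{-ns\epsilon}\mathbb{E}W^{-s},
\]
where $\mathbb{E}W^{-s}<\infty$ by hypothesis. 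Taking $s$ arbitrarily large, this contribution vanishes exponentially at any prescribed rate, so the bound above carries over to $\log Z_n$. Letting $\epsilon\downarrow 0$ and invoking the lower semi-continuity of the convex function $\Lambda^*$ (which gives $\liminf_{\epsilon\downarrow 0}\Lambda^*(x+\epsilon)\ge \Lambda^*(x)$) yields the claim.

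Part~(ii) is entirely symmetric, based on the inclusion
\[
\{\log Z_n\ge nx\} \subseteq \{\log\Pi_n\ge n(x-\epsilon)\} \cup \{W_n\ge e^{n\epsilon}\},
\]
the Cram\'er upper bound (\ref{MDE5.2}) for the right tail of $\log\Pi_n$, and Markov applied to $W_n^s$ together with $\mathbb{E}W_n^s\le \mathbb{E}W^s<\infty$ from Lemma \ref{MDL3.1}. No serious obstacle arises: the entire argument reduces to Cram\'er for i.i.d.\ sums combined with the uniform moment control on $W_n$ supplied by the hypothesis. The only mildly delicate step is the passage $\epsilon\downarrow 0$, but this is handled by lower semi-continuity of $\Lambda^*$ (in fact by continuity on the interior of its effective domain).
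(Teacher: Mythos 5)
Your proof is correct and follows essentially the same route as the paper: the decomposition $\log Z_n = \log\Pi_n + \log W_n$, Cram\'er's bound for $\log\Pi_n$, Markov's inequality combined with Lemma \ref{MDL3.1} to control the $W_n$ tail uniformly in $n$, and then $s\to\infty$, $\epsilon\to 0$. The only (minor) difference is that you spell out why the passage $\epsilon\downarrow 0$ is legitimate via lower semi-continuity of $\Lambda^*$, whereas the paper leaves that step implicit; this is a correct and slightly more careful rendering of the same argument.
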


The inequality (\ref{MDE5.9}) was proved by Bansaye \& Berestycki \cite{ba}. For readers' convenience, we shall prove simultaneously
(\ref{MDE5.9}) and (\ref{MDE5.8}).

\begin{proof}[Proof of Lemma \ref{MDL5.2}]
 By the decomposition (\ref{MDE0}),  for $x\in\mathbb{R}$,
$\epsilon>0$ and $s>0$, we have
\begin{eqnarray*}
\mathbb{P}\left(\frac{\log Z_n}{n}\leq x\right)  &\leq&\mathbb{P}\left(\frac{\log
\Pi_n}{n}\leq
x+\epsilon\right)+\mathbb{P}\left(\frac{\log W_n}{n}\leq -\epsilon\right).
 \end{eqnarray*}
 By Markov's inequality and Lemma \ref{MDL3.1},
 $$\mathbb{P}\left(\frac{\log W_n}{n}\leq -\epsilon\right)\leq\frac{\mathbb{E}W_n^{-s}}{e^{s\epsilon n}}
 \leq\frac{\mathbb{E}W^{-s}}{e^{s\epsilon n}.}$$
Thus
\begin{eqnarray*}
\limsup_{n\rightarrow\infty}\frac{1}{n}\log \mathbb{P}\left(\frac{\log
Z_n}{n}\leq
x\right)&\leq&\max\{\limsup_{n\rightarrow\infty}\frac{1}{n}\log
\mathbb{P}\left(\frac{\log \Pi_n}{n}\leq x+\epsilon\right), -s\epsilon\}\\
&=&\max\{-\Lambda^*(x+\epsilon), -s\epsilon\}.
\end{eqnarray*}
Letting $s\rightarrow\infty$ and  $\epsilon\rightarrow0$, we
obtain (\ref{MDE5.8}). For (\ref{MDE5.9}),  we use a similar argument. For
$\epsilon>0$ and $s>1$,
\begin{eqnarray*}
\mathbb{P}\left(\frac{\log Z_n}{n}\geq x\right)  &\leq&\mathbb{P}\left(\frac{\log
\Pi_n}{n}\geq
x-\epsilon\right)+\mathbb{P}\left(\frac{\log W_n}{n}\geq \epsilon\right)\\
&\leq&\mathbb{P}\left(\frac{\log \Pi_n}{n}\geq
x-\epsilon\right)+\frac{\mathbb{E}W^{s}}{e^{s\epsilon n}}.
 \end{eqnarray*}
Thus
\begin{eqnarray*}
\limsup_{n\rightarrow\infty}\frac{1}{n}\log \mathbb{P}\left(\frac{\log
Z_n}{n}\geq
x\right)&\leq&\max\{\limsup_{n\rightarrow\infty}\frac{1}{n}\log
\mathbb{P}\left(\frac{\log \Pi_n}{n}\geq x-\epsilon\right), -s\epsilon\}\\
&=&\max\{-\Lambda^*(x-\epsilon), -s\epsilon\}.
\end{eqnarray*}
Again letting $s\rightarrow\infty$ and $\epsilon\rightarrow0$,  we
obtain (\ref{MDE5.9}).
\end{proof}

\begin{proof}[Proof of Theorem \ref{MDT5.1}]
It is just a combination of Lemmas \ref{MDL5.1} and
\ref{MDL5.2}.
\end{proof}

Notice that Theorem \ref{MDT5.1} implies Corollary \ref{MDC1.1}.  By Lemma \ref{MDL4.2},  we see that Corollary \ref{MDC1.1} is in fact equivalent to Theorem \ref{MDT5.2}.
So the direct proof of Theorem \ref{MDT5.1} leads to an alternative proof of Theorem \ref{MDT5.2}.

\section{Moderate deviations for $\log Z_n$}\label{MDS5}
Now we turn to the proof of moderate deviation principle (Theorem \ref{MDT4.3}).
Similar to the proof of large deviation principle (Theorem \ref{MDT5.2}), we can
study the convergence rate of $\frac{\log Z_{n}}{n}$ by considering
those of $\frac{\log \Pi_{n}}{n}$. Recall that $(a_n)$ is a sequence of positive numbers satisfying (\ref{an}).
 Let
$$S_n:=\log \Pi_n-n\mathbb{E}\log m_0\quad\text{ and }\quad\bar{\Lambda}_n(t)=\log\mathbb{ E }\exp\left({\frac{tS_n}{a_n}}\right).$$
By the classic moderate deviation results for i.i.d.  random variables (see \cite{z}, Theorem 3.7.1 and its proof), it is known that,
if $f(t )=\mathbb{E}m_{0}^{t }<\infty $ in
a neighborhood of the origin,  then
\begin{equation}\label{MDE41}
\lim_{n\rightarrow\infty}\frac{n}{a_n^2}\bar{\Lambda}_n(\frac{a_n^2}{n}t)= \frac{1}{2}\sigma ^{2}t^2,
\end{equation}
and for any measurable subset $B$ of $ \mathbb{R}$,
  \begin{eqnarray}\label{MDEM}
  -\inf_{x\in B^o}\frac{x^2}{2\sigma^2}&\leq&\liminf_{n\rightarrow\infty}\frac{n}{a_n^2}\log \mathbb{P}\left(\frac{\log \Pi_n-n\mathbb{E}\log m_0}{a_n}\in B\right)\nonumber\\
  &\leq&\limsup_{n\rightarrow\infty} \frac{n}{a_n^2}\log \mathbb{P}\left(\frac{\log \Pi_n-n\mathbb{E}\log m_0}{a_n}\in B\right)
  \leq  -\inf_{x\in \bar{B}}\frac{x^2}{2\sigma^2} . \end{eqnarray}

\begin{lem}\label{MDL4.3} Let $t\in \mathbb{R}$.
\begin{itemize}
\item[](i) If (H) holds and $\|p_1\|_\infty<1$,   then for all $t<0$,
\begin{equation}\label{MDE4.14}
\lim_{n\rightarrow\infty}\frac{\mathbb{E}Z_n^{\frac{a_n}{n}t}}{\mathbb{E}\Pi_n^{\frac{a_n}{n}t}}=1.
\end{equation}

\item[](ii) If (H) holds, then there is a constant $c>0$ such that for all $t>0$,
\begin{equation}\label{MDE4.15}
c\leq\liminf_{n\rightarrow\infty}\frac{\mathbb{E}Z_n^{\frac{a_n}{n}t}}{\mathbb{E}\Pi_n^{\frac{a_n}{n}t}}
\leq\limsup_{n\rightarrow\infty}\frac{\mathbb{E}Z_n^{\frac{a_n}{n}t}}{\mathbb{E}\Pi_n^{\frac{a_n}{n}t}}\leq1.
\end{equation}
\end{itemize}
\end{lem}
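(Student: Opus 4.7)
The plan is to reduce everything to estimating the conditional moment $\mathbb{E}_\xi W_n^{s_n}$, where $s_n:=\frac{a_n}{n}t$. By \eqref{an}, $s_n\to 0$. Since $\Pi_n$ is $\sigma(\xi)$-measurable and $Z_n=\Pi_n W_n$, I would first write
$$\frac{\mathbb{E}Z_n^{s_n}}{\mathbb{E}\Pi_n^{s_n}}=\frac{\mathbb{E}[\Pi_n^{s_n}\,\mathbb{E}_\xi W_n^{s_n}]}{\mathbb{E}\Pi_n^{s_n}},$$
so that both parts will follow once $\mathbb{E}_\xi W_n^{s_n}$ is sandwiched, a.s.\ uniformly in $n$, between two deterministic quantities converging to the target constants.

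For part (i), $s_n<0$, so $x\mapsto x^{s_n}$ is convex, and Jensen's inequality combined with $\mathbb{E}_\xi W_n=1$ immediately gives the lower bound $\mathbb{E}_\xi W_n^{s_n}\geq 1$. For the matching upper bound I would first apply Lemma \ref{MDL3.1} to replace $W_n$ by $W$, and then interpolate: with $a>0$ the exponent provided by Theorem \ref{MDT3.1}(i), the concavity of $x\mapsto x^{|s_n|/a}$ on $(0,\infty)$ for $|s_n|\leq a$ yields
$$\mathbb{E}_\xi W_n^{s_n}\;\leq\;\mathbb{E}_\xi W^{-|s_n|}\;=\;\mathbb{E}_\xi(W^{-a})^{|s_n|/a}\;\leq\;(\mathbb{E}_\xi W^{-a})^{|s_n|/a}\;\leq\; C^{|s_n|/a},$$
which tends to $1$. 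Multiplying by $\Pi_n^{s_n}$ and integrating against $\tau$ then produces \eqref{MDE4.14}.

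For part (ii), $s_n\in(0,1)$ eventually, so concavity of $x\mapsto x^{s_n}$ gives $\mathbb{E}_\xi W_n^{s_n}\leq 1$, supplying the upper bound $\limsup\leq 1$ in \eqref{MDE4.15}. For the lower bound I cannot invoke Theorem \ref{MDT3.1}(i), since the hypothesis $\|p_1\|_\infty<1$ is not assumed. Instead I would exploit the a.s.\ uniform $L^{1+\delta}$ bound $\sup_n\mathbb{E}_\xi W_n^{1+\delta}\leq C$ established inside the proof of Lemma \ref{MDL3.3} under (H) alone, and combine it with a Paley--Zygmund type truncation: decomposing $1=\mathbb{E}_\xi W_n$ on $\{W_n\leq\tfrac{1}{2}\}$ and $\{W_n>\tfrac{1}{2}\}$ and applying Hölder gives
$$1\;\leq\;\tfrac{1}{2}+\bigl(\mathbb{E}_\xi W_n^{1+\delta}\bigr)^{1/(1+\delta)}\mathbb{P}_\xi(W_n>\tfrac{1}{2})^{\delta/(1+\delta)},$$
whence a deterministic $c>0$ with $\mathbb{P}_\xi(W_n>\tfrac{1}{2})\geq c$ a.s. Then $\mathbb{E}_\xi W_n^{s_n}\geq(1/2)^{s_n}c\to c$, and pairing this with $\Pi_n^{s_n}$ under the expectation transfers the bound to the ratio.

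The main obstacle is the lower bound in (ii): without a harmonic-moment hypothesis on $W$, the clean interpolation used in (i) is unavailable, and one must settle for the cruder Paley--Zygmund estimate based on the uniform $L^{1+\delta}$ control that (H) alone supplies.
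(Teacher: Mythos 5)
Your proof is correct. Part (i) follows the paper's argument almost verbatim: Jensen with the convex map $x\mapsto x^{s_n}$ gives $\mathbb{E}_\xi W_n^{s_n}\geq 1$, and the upper bound comes from Lemma \ref{MDL3.1} plus Jensen applied to the concave power $x\mapsto x^{|s_n|/a}$, using the uniform quenched harmonic-moment bound of Theorem \ref{MDT3.1}(i); the only cosmetic difference is that the paper keeps $W_n$ and writes $\mathbb{E}_\xi W_n^{t_n}=\mathbb{E}_\xi(W_n^{-s})^{-t_n/s}\leq(\mathbb{E}_\xi W_n^{-s})^{-t_n/s}$ before passing to $W$, whereas you pass to $W$ first.

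Part (ii) is where you genuinely diverge. The paper deduces the lower bound by H\"older with $n$-dependent exponents $p(n)=\frac{s-t_n}{s-1}$, $q(n)=\frac{s-t_n}{1-t_n}$ (so that $(1-t_n/p)q=s$), giving directly $\mathbb{E}_\xi W_n^{t_n}\geq(\mathbb{E}_\xi W_n^s)^{-\frac{1-t_n}{s-1}}\geq C_s^{-\frac{1-t_n}{s-1}}$, which converges to the explicit constant $c=C_s^{-1/(s-1)}$. You instead run a Paley--Zygmund truncation: from $1=\mathbb{E}_\xi W_n\leq\frac12+(\mathbb{E}_\xi W_n^{1+\delta})^{1/(1+\delta)}\mathbb{P}_\xi(W_n>\tfrac12)^{\delta/(1+\delta)}$ you extract a deterministic lower bound $\mathbb{P}_\xi(W_n>\tfrac12)\geq c'$, whence $\mathbb{E}_\xi W_n^{s_n}\geq(1/2)^{s_n}c'\to c'$. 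Both routes rest on the same single ingredient that (H) alone supplies, namely $\sup_n\mathbb{E}_\xi W_n^{1+\delta}\leq C$ a.s.\ from the proof of Lemma \ref{MDL3.3}. Your version avoids tuning the H\"older exponents to make the conjugate power hit $s$ exactly and is somewhat more transparent; the paper's version yields a cleaner closed form for $c$. Either way the constants are not optimal and the lemma only needs some $c>0$, so the two arguments are of equal strength for the purpose at hand.
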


\begin{proof}[Proof]
(i)  Let $t_n=\frac{a_n}{n}t$.  For $t<0$, we have $t_n<0$. By Jensen's inequality,
$$\mathbb{E}_\xi W_n^{t_n}\geq (\mathbb{E}_\xi W_n)^{t_n}=1\qquad a.s..$$
Thus
\begin{equation}\label{MDE4.16}
\mathbb{E}Z_n^{t_n}=\mathbb{E}\Pi_n^{t_n}\mathbb{E}_\xi W_n^{t_n}\geq \mathbb{E }\Pi_n^{t_n},
\end{equation}
which leads to
$$\liminf_{n\rightarrow\infty}\frac{\mathbb{E}Z_n^{t_n}}{\mathbb{E}\Pi_n^{t_n}}\geq 1.$$
On the other hand,
if (H) holds and $\|p_1\|_\infty<1$,
then by Theorem \ref{MDT3.1}, we have $\mathbb{E}_\xi W^{-s}\leq C_s$ a.s. for some constants $s>0$ and $C_s>0$.
Noticing that $-t_n/s\in(0,1)$ for $n$ large enough and  that by Lemma \ref{MDL3.1}, $\mathbb{E}_\xi W_n^{-s}\leq \mathbb{E}_\xi W^{-s}$ a.s., again by Jensen's inequality, we have
$$\mathbb{E}_\xi W_n^{t_n}=\mathbb{E}_\xi(W_n^{-s})^{-t_n/s}\leq (\mathbb{E}_\xi W_n^{-s})^{-t_n/s}\leq (\mathbb{E}_\xi W^{-s})^{-t_n/s}\leq C_s^{-t_n/s},$$
so that
$$\mathbb{E}Z_n^{t_n}\leq C_s^{-t_n/s}\mathbb{E}\Pi_n^{t_n}.$$
Letting $n\rightarrow\infty$, we obtain
$$\limsup_{n\rightarrow\infty}\frac{\mathbb{E}Z_n^{t_n}}{\mathbb{E}\Pi_n^{t_n}}\leq 1.$$

(ii) For $t>0$,  we have $t_n=\frac{a_n}{n}t\in(0,1)$ for $n$ large enough, so by Jensen's inequality,
$$\mathbb{E}_\xi W_n^{t_n}\leq (\mathbb{E}_\xi W_n)^{t_n}=1\qquad a.s..$$ Thus
$$\limsup_{n\rightarrow\infty}\frac{\mathbb{E}Z_n^{t_n}}{\mathbb{E}\Pi_n^{t_n}}\leq 1.$$
On the other hand,  from the proof Lemma \ref{MDL3.3}, we know that  the assumption (H) ensures that
 $\mathbb{E}_\xi W^{s}\leq C_s$ a.s.   for $1<s\leq 1+\delta$ and some constant $C_s>0$.
By H\"older's inequality,
\begin{eqnarray}
1=\mathbb{E}_\xi W_n&\leq& \mathbb{E}_\xi W_n^{t_n/p}W_n^{1-t_n/p}\nonumber\\
&\leq&\left(\mathbb{E}_\xi W_n^{t_n}\right)^{1/p}\left(\mathbb{E}_\xi W_n^{(1-t_n/p)q}\right)^{1/q}\qquad a.s.,\label{MDE4.17}
\end{eqnarray}
for $p,q>1$, $1/p+1/q=1$. Take $p=p(n)=\frac{s-t_n}{s-1}$ and $q=q(n)=\frac{s-t_n}{1-t_n}$,  so that $(1-t_n/p)q=s$
 and $p/q=\frac{1-t_n}{s-1}$. Notice that by Lemma \ref{MDL3.1}, $\mathbb{E}_\xi W_n^{-s}\leq \mathbb{E}_\xi W^{-s}$ a.s.. We deduce from (\ref{MDE4.17}) that
$$\mathbb{E}_\xi W_n^{t_n}\geq \left(\mathbb{E}_\xi W_n^{s}\right)^{-\frac{1-t_n}{s-1}}\geq
\left(\mathbb{E}_\xi W^{s}\right)^{-\frac{1-t_n}{s-1}}\geq   C_s^{-\frac{1-t_n}{s-1}}.$$
Thus
$$\mathbb{E}Z_n^{t_n}\geq C_s^{-\frac{1-t_n}{s-1}}\mathbb{E}\Pi_n^{t_n}.$$
Letting $n\rightarrow\infty$, we obtain
$$\liminf_{n\rightarrow\infty}\frac{\mathbb{E}Z_n^{t_n}}{\mathbb{E}\Pi_n^{t_n}}\geq c,$$
where $c= C_s^{-\frac{1}{s-1}}\in(0,1]$. This completes the proof.
\end{proof}

\begin{thm}
Let $\Lambda_n(t)=\log \mathbb{E} \exp{\left(\frac{\log Z_n-n \mathbb{E}\log m_0}{a_n}t\right)}$
and $\bar{\Lambda}_n(t)=\log \mathbb{E }\exp\left({\frac{tS_n}{a_n}}\right)$.  If (H) holds, then
\begin{equation}\label{MDE4.18}
\lim_{n\rightarrow\infty}\frac{\Lambda_n(\frac{a_n^2}{n}t)}{\bar{\Lambda}_n(\frac{a_n^2}{n}t)}=1,\qquad \forall t\neq 0
\end{equation}
and
\begin{equation}\label{MDE4.19}
\lim_{n\rightarrow\infty}\frac{\log \mathbb{E}Z_n^{\frac{a_n}{n}t}}{\log \mathbb{E}\Pi_n^{\frac{a_n}{n}t}}=1,\qquad \forall t\neq 0.
\end{equation}
\end{thm}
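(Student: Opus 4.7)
The plan is to derive both limits as short consequences of Lemma~\ref{MDL4.3} together with the classical cumulant asymptotics (\ref{MDE41}). Unfolding definitions with $t_n := \frac{a_n}{n}t$ gives
\begin{equation*}
\Lambda_n\bigl(\tfrac{a_n^2}{n}t\bigr) \;=\; \log\mathbb{E}Z_n^{t_n} - a_n t\,\mathbb{E}\log m_0, \qquad
\bar{\Lambda}_n\bigl(\tfrac{a_n^2}{n}t\bigr) \;=\; \log\mathbb{E}\Pi_n^{t_n} - a_n t\,\mathbb{E}\log m_0,
\end{equation*}
so their difference equals $\log(\mathbb{E}Z_n^{t_n}/\mathbb{E}\Pi_n^{t_n})$. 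Lemma~\ref{MDL4.3} shows this ratio tends to $1$ for $t<0$ and is trapped between some $c>0$ and $1$ asymptotically for $t>0$, hence in both regimes the difference is $O(1)$ uniformly in $n$ large. This is the one substantive input; everything else is a size comparison.

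For (\ref{MDE4.18}), I would simply divide this bounded difference by $\bar{\Lambda}_n(\tfrac{a_n^2}{n}t)$, which by (\ref{MDE41}) is asymptotic to $\frac{a_n^2 \sigma^2 t^2}{2n}$; since $a_n^2/n\to\infty$ by (\ref{an}) and $\sigma^2>0$, the denominator diverges for every $t\ne 0$, so the ratio tends to $1$.

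For (\ref{MDE4.19}), the same bounded-difference strategy works once the denominator $\log\mathbb{E}\Pi_n^{t_n} = a_n t\,\mathbb{E}\log m_0 + \bar{\Lambda}_n(\tfrac{a_n^2}{n}t)$ is shown to diverge in absolute value. I would observe that the hypothesis $a_n/n\to 0$ forces $a_n^2/n = o(a_n)$, so the second term is of lower order than the first; combined with $\mathbb{E}\log m_0\in(0,\infty)$ from (\ref{MDE1.0}) and $a_n\to\infty$, we get $\log\mathbb{E}\Pi_n^{t_n} \sim a_n t\,\mathbb{E}\log m_0$, whose modulus diverges to $\infty$ for every $t\ne 0$. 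This is the only slightly delicate point in the proof: both halves of the scaling hypothesis (\ref{an}) are needed, $a_n/\sqrt n\to\infty$ to make $\bar{\Lambda}_n$ itself blow up (so that (\ref{MDE4.18}) is non-vacuous) and $a_n/n\to 0$ to keep the Gaussian term $\bar{\Lambda}_n$ subdominant inside $\log\mathbb{E}\Pi_n^{t_n}$ (so that the leading behaviour is the linear shift $a_n t\,\mathbb{E}\log m_0$). Beyond this order comparison there is no real obstacle, since the nontrivial probabilistic content has already been absorbed into Lemma~\ref{MDL4.3}.
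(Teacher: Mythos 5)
Your unfolding of $\Lambda_n(\frac{a_n^2}{n}t) - \bar\Lambda_n(\frac{a_n^2}{n}t) = \log\bigl(\mathbb{E}Z_n^{t_n}/\mathbb{E}\Pi_n^{t_n}\bigr)$ and the two order comparisons (that $\bar\Lambda_n(\frac{a_n^2}{n}t)\to\infty$ because $a_n^2/n\to\infty$, and that $\log\mathbb{E}\Pi_n^{t_n}\sim a_n t\,\mathbb{E}\log m_0$ because $a_n^2/n=o(a_n)$) are correct, and for $t>0$ your invocation of Lemma~\ref{MDL4.3}(ii) is exactly what the paper does.

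The gap is in the case $t<0$: you appeal to Lemma~\ref{MDL4.3} for boundedness of the difference, but Lemma~\ref{MDL4.3}(i) carries the extra hypothesis $\|p_1\|_\infty<1$, which this theorem does not assume. The paper flags exactly this point -- it notes that for $t<0$ the lemma only applies ``if additionally $\|p_1\|_\infty<1$'' -- and then supplies a replacement argument under (H) alone. That argument gets $\liminf\Lambda_n/\bar\Lambda_n\geq1$ from Jensen (i.e.\ from (\ref{MDE4.16})), and for the matching $\limsup\leq1$ applies H\"older's inequality to $\mathbb{E}\,e^{\frac{a_n}{n}tS_n}W_n^{\frac{a_n}{n}t}$, giving $\Lambda_n(\frac{a_n^2}{n}t)\leq\frac1p\bar\Lambda_n(\frac{a_n^2}{n}pt)+\frac1q\log(1+\mathbb{E}W^{-s})$ with $\mathbb{E}W^{-s}<\infty$ from Theorem~\ref{MDT3.2}(i), which holds under (H) alone. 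Using (\ref{MDE41}) this yields $\limsup\Lambda_n/\bar\Lambda_n\leq p$ for every $p>1$, hence $\leq1$. The H\"older decoupling is the missing idea: without $\|p_1\|_\infty<1$ there is no uniform quenched bound on $\mathbb{E}_\xi W^{-s}$ (Theorem~\ref{MDT3.1}(i) needs $\|p_1\|_\infty<1$), so boundedness of $\mathbb{E}Z_n^{t_n}/\mathbb{E}\Pi_n^{t_n}=\mathbb{E}[\Pi_n^{t_n}\mathbb{E}_\xi W_n^{t_n}]/\mathbb{E}\Pi_n^{t_n}$ cannot be read off directly; one must separate the $\Pi_n^{t_n}$ weight from $W_n^{t_n}$ so that only the annealed moment $\mathbb{E}W^{-s}$ enters.
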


\begin{proof}[Proof]
We only need prove (\ref{MDE4.18}), which implies (\ref{MDE4.19}). For $t>0$,  (\ref{MDE4.18}) is a direct consequence of Lemma \ref{MDL4.3}(ii).
 For $t<0$,  if additionally $\|p_1\|_\infty<1$, then (\ref{MDE4.18}) is also a direct consequence of Lemma \ref{MDL4.3}(i); we shall prove that the condition
 $\|p_1\|_\infty<1$ is not needed for (\ref{MDE4.18}) to hold. Assume (H) and let $t<0$. Notice that
 (\ref{MDE4.16}) implies that
 $$\liminf_{n\rightarrow\infty}\frac{\Lambda_n(\frac{a_n^2}{n}t)}{\bar{\Lambda}_n(\frac{a_n^2}{n}t)}\geq1.$$
 It remains to show that
 \begin{equation}\label{MDE4.20}
\limsup_{n\rightarrow\infty}\frac{\Lambda_n(\frac{a_n^2}{n}t)}{\bar{\Lambda}_n(\frac{a_n^2}{n}t)}\leq 1.
\end{equation}
By H\"older's inequality,
\begin{eqnarray*}
\exp{\left(\Lambda_n(\frac{a_n^2}{n}t)\right)}&=&\mathbb{E}\exp{\left(\frac{a_n}{n}t(\log Z_n -n \mathbb{E}\log m_0)\right)}\\
&=&\mathbb{E}e^{\frac{a_n}{n}tS_n}W_n^{\frac{a_n}{n}t}\\
&\leq&\left(\mathbb{E}e^{\frac{a_n}{n}ptS_n}\right)^{1/p}\left(\mathbb{E} W_n^{\frac{a_n}{n}tq}\right)^{1/q}\\
&\leq&\exp{\left(\frac{1}{p}\bar{\Lambda}_n(\frac{a_n^2}{n}pt)\right)}\left(\mathbb{E} W_n^{\frac{a_n}{n}tq}\right)^{1/q},
\end{eqnarray*}
where $p,q>1$ are constants satisfying $1/p+1/q=1$. By Theorem \ref{MDT3.2}, there exists $s>0$ such that $\mathbb{E}W^{-s}<\infty$.
Noticing that $t_n q>-s$ for n large, we have
$$\mathbb{E}W_n^{t_nq}\leq 1+\mathbb{E}W_n^{-s}\leq 1+\mathbb{E} W^{-s}.$$
Hence for $n$ large enough,
$$\Lambda_n(\frac{a_n^2}{n}t)\leq \frac{1}{p}\bar{\Lambda}_n(\frac{a_n^2}{n}pt)+\frac{1}{q}\log (1+\mathbb{E}W^{-s}).$$
Therefore, considering (\ref{MDE41}), we have
$$\limsup_{n\rightarrow\infty}\frac{\Lambda_n(\frac{a_n^2}{n}t)}{\bar{\Lambda}_n(\frac{a_n^2}{n}t)}
\leq  \frac{1}{p}\frac{\frac{1}{2}\sigma^2p^2t^2}{\frac{1}{2}\sigma^2t^2}= p.$$
Letting $p\rightarrow 1$, (\ref{MDE4.20}) is proved.
\end{proof}

\begin{proof}[Proof of Theorem \ref{MDT4.3}]
From (\ref{MDE4.18}) and (\ref{MDE41}) we have
$$\lim_{n\rightarrow\infty}\frac{n}{a_n^2}\Lambda_n(\frac{a_n^2}{n}t)
=\lim_{n\rightarrow\infty}\frac{n}{a_n^2}\bar{\Lambda}_n(\frac{a_n^2}{n}t)= \frac{1}{2}\sigma ^{2}t^2.$$
Applying the G\"artner-Ellis theorem (\cite{z}, p.52, Exercise 2.3.20), we obtain Theorem \ref{MDT4.3}.
\end{proof}

The following theorem about the tail probabilities is a direct consequence of Theorem \ref{MDT4.3}.

\begin{thm}
\label{MDT1.2} Assume $(H)$ and write  $\sigma ^{2}=\emph{var}(\log
m_{0})\in(0,\infty)$. Then for all $x>0$,
\begin{equation}
\lim_{n\rightarrow \infty }\frac{n}{a_{n}^{2}}\log \mathbb{P}\left(
\frac{\log Z_{n}-n\mathbb{E}\log m_{0}}{a_{n}}\leq -x\right)
=-\frac{x^{2}}{2\sigma ^{2}}, \label{MDE1.3}
\end{equation}
\begin{equation}
\lim_{n\rightarrow \infty }\frac{n}{a_{n}^{2}}\log \mathbb{P}\left(
\frac{\log Z_{n}-n\mathbb{E}\log m_{0}}{a_{n}}\geq x\right)
=-\frac{x^{2}}{2\sigma ^{2}}. \label{MDE1.4}
\end{equation}
\end{thm}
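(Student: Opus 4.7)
My plan is to deduce the two tail estimates directly from the moderate deviation principle (Theorem \ref{MDT4.3}) by specializing the measurable subset $B$ of $\mathbb{R}$ to the half-lines $B_{-}=(-\infty,-x]$ and $B_{+}=[x,\infty)$, where $x>0$. Since Theorem \ref{MDT4.3} already provides the two-sided estimate under the sole assumption $(H)$ (which is what we have here), the bulk of the work consists in verifying that the variational expressions appearing in the upper and lower bounds coincide for these particular $B$.

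First I would handle the lower tail (\ref{MDE1.3}). Take $B=B_{-}=(-\infty,-x]$, so that $B^{o}=(-\infty,-x)$ and $\bar{B}=(-\infty,-x]$. The rate function $I(y)=y^{2}/(2\sigma^{2})$ is continuous, strictly decreasing on $(-\infty,0]$, hence
\[
\inf_{y\in B^{o}} I(y) \;=\; \inf_{y<-x}\frac{y^{2}}{2\sigma^{2}} \;=\; \frac{x^{2}}{2\sigma^{2}} \;=\; \inf_{y\le -x}\frac{y^{2}}{2\sigma^{2}} \;=\; \inf_{y\in\bar{B}} I(y).
\]
Inserting this common value into the chain of inequalities of Theorem \ref{MDT4.3} forces the $\liminf$ and the $\limsup$ to coincide, which gives precisely (\ref{MDE1.3}). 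The upper tail (\ref{MDE1.4}) is obtained identically by choosing $B=B_{+}=[x,\infty)$ and using that $I$ is strictly increasing on $[0,\infty)$; then again $\inf_{y\in B_{+}^{o}} I(y) = \inf_{y\in\bar{B}_{+}} I(y) = x^{2}/(2\sigma^{2})$.

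I do not anticipate any real obstacle here, since all the probabilistic content has been concentrated into Theorem \ref{MDT4.3}; the only thing to watch is to make sure that the infimum over the open set $B^{o}$ is not strictly larger than the infimum over $\bar{B}$ (which would create a gap between the upper and lower bounds). This is guaranteed by the continuity of $y\mapsto y^{2}/(2\sigma^{2})$ together with the fact that the boundary point $\pm x$ lies in the closure of the interior of the corresponding half-line. Thus Theorem \ref{MDT1.2} is an immediate corollary of Theorem \ref{MDT4.3}.
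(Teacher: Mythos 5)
Your proof is correct and follows exactly the route the paper intends: the paper explicitly says ``The following theorem about the tail probabilities is a direct consequence of Theorem~\ref{MDT4.3},'' and your specialization of the measurable set $B$ to the half-lines $(-\infty,-x]$ and $[x,\infty)$, together with the observation that continuity and strict monotonicity of $y\mapsto y^2/(2\sigma^2)$ make $\inf_{B^o}=\inf_{\bar B}$, is precisely the justification for that claim. (The paper also gives a second, more hands-on proof via Lemmas~\ref{MDT4.1} and~\ref{MDL4.1}, but only because that route yields one-sided bounds under weaker moment hypotheses, not because your shorter derivation is insufficient.)
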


It is also possible to give a direct proof of Theorem \ref{MDT1.2}. We shall give such a proof in the following,
as it will give additional one-side results on the tail probabilities under weaker assumptions.

\begin{lem}
\label{MDT4.1}  If $f(t)=\mathbb{E}m_0^t<\infty$ in a
neighborhood of the origin, then for all $x>0$ ,

\begin{equation}  \label{MDE4.3}
\liminf_{n\rightarrow\infty}\frac{n}{a_n^2}\log \mathbb{P}\left(\frac{\log
Z_n-n\mathbb{E}\log m_0}{a_n}\leq -x\right)\geq-\frac{x^2}{2\sigma^2},
\end{equation}
 \begin{equation}  \label{MDE4.4}
\limsup_{n\rightarrow\infty}\frac{n}{a_n^2}\log \mathbb{P}\left(\frac{\log
Z_n-n\mathbb{E}\log m_0}{a_n}\geq x\right)\leq-\frac{x^2}{2\sigma^2}.
\end{equation}
\end{lem}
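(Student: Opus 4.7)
The plan is to start from the decomposition $\log Z_n = \log \Pi_n + \log W_n$ given in (\ref{MDE0}) and reduce the moderate deviation of $\log Z_n$ to the classical moderate deviation for the i.i.d.\ sum $\log \Pi_n = \sum_{k=0}^{n-1}\log m_k$. The latter is already recorded in (\ref{MDEM}) under the hypothesis of the lemma, so the task reduces to showing that $\log W_n$ is negligible on the moderate scale $a_n^2/n$.

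The key estimate is a one-sided control of $\log W_n$. Since $W_n$ is a nonnegative martingale with $\mathbb{E}W_n = 1$, Jensen's inequality gives $\mathbb{E}W_n^s \leq 1$ for any $s \in (0,1)$, and hence Markov's inequality yields, for every $\epsilon>0$,
\begin{equation*}
\mathbb{P}(\log W_n \geq \epsilon a_n) \leq e^{-s\epsilon a_n},
\end{equation*}
so that
\begin{equation*}
\frac{n}{a_n^2}\log \mathbb{P}(\log W_n \geq \epsilon a_n) \leq -s\epsilon\,\frac{n}{a_n} \longrightarrow -\infty
\end{equation*}
by the first half of (\ref{an}). Note that only an \emph{upper} bound on $W_n$ enters, so no information about negative moments of $W$ is required; this is precisely why the two one-sided conclusions of the lemma persist under assumptions much weaker than (H).

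With this in hand, (\ref{MDE4.4}) follows from the union bound
\begin{equation*}
\mathbb{P}\!\left(\frac{\log Z_n - n\mathbb{E}\log m_0}{a_n} \geq x\right) \leq \mathbb{P}\!\left(\frac{\log \Pi_n - n\mathbb{E}\log m_0}{a_n} \geq x-\epsilon\right) + \mathbb{P}(\log W_n \geq \epsilon a_n)
\end{equation*}
combined with (\ref{MDEM}) applied to the first term, after sending $\epsilon \to 0$. For (\ref{MDE4.3}), the reverse inclusion
\begin{equation*}
\{\log \Pi_n - n\mathbb{E}\log m_0 \leq -(x+\epsilon)a_n\} \cap \{\log W_n \leq \epsilon a_n\} \subset \{\log Z_n - n\mathbb{E}\log m_0 \leq -x a_n\}
\end{equation*}
gives
\begin{equation*}
\mathbb{P}\!\left(\frac{\log Z_n - n\mathbb{E}\log m_0}{a_n} \leq -x\right) \geq \mathbb{P}\!\left(\frac{\log \Pi_n - n\mathbb{E}\log m_0}{a_n} \leq -(x+\epsilon)\right) - \mathbb{P}(\log W_n > \epsilon a_n).
\end{equation*}
Since the subtracted probability decays at least as fast as $e^{-s\epsilon a_n}$ while the first is of order $\exp(-(x+\epsilon)^2 a_n^2/(2\sigma^2 n))$ by (\ref{MDEM}), the subtraction is negligible on the $a_n^2/n$ log-scale; taking $\liminf$ and then $\epsilon \to 0$ finishes the proof. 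The only delicate point is this negligibility comparison, which ultimately amounts to $n/a_n \to \infty$.
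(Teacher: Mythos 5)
Your proof is correct and follows essentially the same route as the paper: decompose $\log Z_n = \log\Pi_n + \log W_n$, invoke the classical moderate deviation principle for the i.i.d.\ sum $\log\Pi_n$, and kill the $\log W_n$ term by Markov, using that $\epsilon a_n$ dominates $C\,a_n^2/n$ because $a_n/n \to 0$. The only cosmetic difference is your detour through Jensen to get $\mathbb{E}W_n^s \le 1$ for $s\in(0,1)$; the paper simply applies Markov with $\mathbb{E}W_n = 1$ directly, which gives the (even slightly stronger) bound $e^{-\epsilon a_n}$ — both suffice.
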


\begin{proof}[Proof]
Let $x>0$.
By (\ref{MDEM}), the moderate deviation principle for $\log \Pi_n$, we have
\begin{equation}
\lim_{n\rightarrow \infty }\frac{n}{a_{n}^{2}}\log \mathbb{P}\left(
\frac{\log \Pi_{n}-n\mathbb{E}\log m_{0}}{a_{n}}\leq -x\right)
=-\frac{x^{2}}{2\sigma ^{2}} \label{MDE4.1}
\end{equation}
and
\begin{equation}
\lim_{n\rightarrow \infty }\frac{n}{a_{n}^{2}}\log \mathbb{P}\left(
\frac{\log \Pi_{n}-n\mathbb{E}\log m_{0}}{a_{n}}\geq x\right)
=-\frac{x^{2}}{2\sigma ^{2}}.\label{MDE4.2}
\end{equation}
 For every $\epsilon>0$,
\begin{eqnarray*}
&&\mathbb{P}\left(\frac{\log Z_n-n\mathbb{E}\log m_0}{a_n}\leq -x \right)\\
&\geq&\mathbb{P}\left(\frac{\log \Pi_n-n\mathbb{E}\log m_0}{a_n}\leq -x-\epsilon\right)-\mathbb{P}(W_n\geq e^{a_n\epsilon})\\
&=&:u_n-v_n=u_n(1-v_n/u_n ).
\end{eqnarray*}
By (\ref{MDE4.1}), we have $\forall\delta'>0$, for $n$ large enough,
$$u_n\geq\exp{\left(-\frac{a_n^2}{n}\left(\frac{(x+\epsilon)^2}{2\sigma^2}+\delta'\right)\right)}.$$
Furthermore, by Markov's inequality,
$$v_n=\mathbb{P}(W_n\geq e^{a_n\epsilon})\leq e^{-a_n\epsilon}.$$
Hence,
$$0\leq\frac{v_n}{u_n}\leq\exp{\left(-a_n\epsilon+\frac{a_n^2}{n}\left(\frac{(x+\epsilon)^2}{2\sigma^2}+\delta'\right)\right)}\rightarrow0
\quad \text{as}\;n\rightarrow\infty,$$ since
$$\lim_{n\rightarrow\infty}\frac{-a_n\epsilon+\frac{a_n^2}{n}\left(\frac{(x+\epsilon)^2}{2\sigma^2}+\delta'\right)}{a_n}=-\epsilon<0.$$
Therefore,
$$\liminf_{n\rightarrow\infty}\frac{n}{a_n^2}\log \mathbb{P}\left(\frac{\log Z_n-n\mathbb{E}\log m_0}{a_n}\leq -x\right)
\geq\liminf_{n\rightarrow\infty}\frac{n}{a_n^2}\log
u_n=-\frac{(x+\epsilon)^2}{2\sigma^2}.
$$
Letting $\epsilon\rightarrow0$, we obtain (\ref{MDE4.3}). For
(\ref{MDE4.4}), the proof is similar. For every $\epsilon>0$,
\begin{eqnarray*}
&&\mathbb{P}\left( \frac{\log Z_n-n\mathbb{E}\log m_0}{a_n}\geq x\right)\\
&\leq&\mathbb{P}(W_n\geq e^{a_n\epsilon})+ \mathbb{P}\left(\frac{\log \Pi_n-n\mathbb{E}\log m_0}{a_n}\geq x-\epsilon\right)\\
&=&:v_n+\tilde{u}_n=\tilde{u}_n(1+v_n/\tilde{u}_n).
\end{eqnarray*}
Since $\lim_{n\rightarrow\infty}\frac{v_n}{\tilde{u}_n}=0$, we have
$$\limsup_{n\rightarrow\infty}\frac{n}{a_n^2}\log \mathbb{P}\left(\frac{\log Z_n-n\mathbb{E}\log m_0}{a_n}\geq x\right)
\leq\limsup_{n\rightarrow\infty}\frac{n}{a_n^2}\log\tilde{u}_n=-\frac{(x-\epsilon)^2}{2\sigma^2}.$$
Letting $\epsilon\rightarrow0$, we get (\ref{MDE4.4}).
\end{proof}

To prove Theorem \ref{MDT1.2}, we need to estimate the decay rate of the probabilities $\mathbb{P}(W_n\leq
e^{-a_n\epsilon})$ for $\epsilon >0$.

\begin{lem}
\label{MDL4.1}  If $\mathbb{E}W^{-s}<\infty$ for some $s>0$, then for any
positive
sequence $(a_n)$ satisfying $a_n\rightarrow\infty$, we have for all $
\epsilon>0$,
\begin{equation}  \label{MDE4.5}
\limsup_{n\rightarrow\infty}\frac{1}{a_n}\log \mathbb{P}(W_n\leq
e^{-a_n\epsilon})\leq-s\epsilon.
\end{equation}
\end{lem}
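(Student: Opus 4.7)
The plan is to prove this bound by a direct Markov inequality applied to $W_n^{-s}$, combined with the uniform bound on the negative moments of $W_n$ furnished by Lemma \ref{MDL3.1}.

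First I would rewrite the event $\{W_n \leq e^{-a_n \epsilon}\}$ as $\{W_n^{-s} \geq e^{s a_n \epsilon}\}$, which is legitimate since $W_n > 0$ a.s. (this follows because $p_0 = 0$ a.s., so $Z_n \geq 1$ and $W_n = Z_n/\Pi_n > 0$). Then Markov's inequality gives
\begin{equation*}
\mathbb{P}(W_n \leq e^{-a_n \epsilon}) \;=\; \mathbb{P}(W_n^{-s} \geq e^{s a_n \epsilon}) \;\leq\; e^{-s a_n \epsilon}\, \mathbb{E} W_n^{-s}.
\end{equation*}

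Next I would apply Lemma \ref{MDL3.1} (with the convex function $\varphi(x)=x^{-s}$ on $(0,\infty)$, extended appropriately) to get $\mathbb{E} W_n^{-s} \leq \sup_n \mathbb{E} W_n^{-s} = \mathbb{E} W^{-s} < \infty$ by hypothesis. Therefore
\begin{equation*}
\frac{1}{a_n}\log \mathbb{P}(W_n \leq e^{-a_n \epsilon}) \;\leq\; -s\epsilon + \frac{\log \mathbb{E} W^{-s}}{a_n}.
\end{equation*}

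Finally, since $a_n \to \infty$ and $\mathbb{E} W^{-s}$ is a finite positive constant independent of $n$, the last term tends to $0$, and taking $\limsup_{n\to\infty}$ yields the claimed bound $-s\epsilon$. There is no real obstacle here: the only subtle point worth flagging is the justification that $\mathbb{E} W_n^{-s} \leq \mathbb{E} W^{-s}$, which is exactly the content of Lemma \ref{MDL3.1} applied to $\varphi(x)=x^{-s}$; everything else is a one-line Markov inequality.
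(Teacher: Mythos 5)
Your proof is correct and coincides with the paper's argument: both apply Markov's inequality to $W_n^{-s}$ and then use Lemma \ref{MDL3.1} to replace $\mathbb{E}W_n^{-s}$ by the finite bound $\mathbb{E}W^{-s}$, after which the constant washes out in the $\limsup$.
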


\begin{proof}[Proof]  By Markov's inequality and Lemma \ref{MDL3.1},
$$\mathbb{P}(W_n\leq e^{-a_n\epsilon})\leq\frac{\mathbb{E}W_n^{-s}}{e^{sa_n\epsilon}}\leq\frac{\mathbb{E}W^{-s}}{e^{sa_n\epsilon}}.$$
Thus
$$\frac{1}{a_n}\log \mathbb{P}(W_n\leq
e^{-a_n\epsilon})\leq\frac{1}{a_n}\log
\mathbb{E}W^{-s}-s\epsilon.$$ Taking the limit superior in the above inequality
gives (\ref{MDE4.5}).
\end{proof}

\begin{proof} [Another proof of Theorem \ref{MDT1.2}]
Lemma \ref{MDT4.1} gives one side of the desired results, so we
only need to prove the other side. By
Theorem \ref{MDT3.2}, there exists $s>0$ such that $\mathbb{E}W^{-s}<\infty$, so (\ref{MDE4.5}) holds for this $s$.
For $x>0$, we have for every $\epsilon>0$,
\begin{eqnarray*}
&&\mathbb{P}\left(\frac{\log Z_n-n\mathbb{E}\log m_0}{a_n}\leq -x \right)\\
&\leq&\mathbb{P}\left(W_n\leq e^{-a_n\epsilon}\right)+\mathbb{P}\left(\frac{\log \Pi_n-n\mathbb{E}\log m_0}{a_n}\leq -x+\epsilon\right)\\
&=&:v_n+u_n.
\end{eqnarray*}
By (\ref{MDE4.1}) and (\ref{MDE4.5}),
$\lim_{n\rightarrow\infty}\frac{v_n}{u_n}=0$, thus,
$$\limsup_{n\rightarrow\infty}\frac{n}{a_n^2}\log \mathbb{P}\left(\frac{\log Z_n-n\mathbb{E}\log m_0}{a_n}\leq -x\right)
\leq\limsup_{n\rightarrow\infty}\frac{n}{a_n^2}\log
u_n=-\frac{(x-\epsilon)^2}{2\sigma^2}.$$
 Letting $\epsilon\rightarrow0$, we obtain
\begin{equation}\label{MDE4.6}
\limsup_{n\rightarrow\infty}\frac{n}{a_n^2}\log \mathbb{P}\left(\frac{\log
Z_n-n\mathbb{E}\log m_0}{a_n}\leq -x\right) \leq-\frac{x^2}{2\sigma^2}.
\end{equation}
(\ref{MDE4.3}) and (\ref{MDE4.6}) yield (\ref{MDE1.3}). To prove
(\ref{MDE1.4}), on account of (\ref{MDE4.4}), it remains to show
that
\begin{equation}\label{MDE4.7}
\liminf_{n\rightarrow\infty}\frac{n}{a_n^2}\log \mathbb{P}\left(\frac{\log
Z_n-n\mathbb{E}\log m_0}{a_n}\geq x\right) \geq-\frac{x^2}{2\sigma^2}.
\end{equation}
Similarly, for every $\epsilon>0$,
\begin{eqnarray*}
&&\mathbb{P}\left(\frac{\log Z_n-n\mathbb{E}\log m_0}{a_n}\geq x \right)\\
&\geq& \mathbb{P}\left(\frac{\log \Pi_n-n\mathbb{E}\log m_0}{a_n}\geq x+\epsilon\right)-\mathbb{P}(W_n\leq e^{-a_n\epsilon})\\
 &=&:\tilde{u}_n-v_n.
\end{eqnarray*}
Again by (\ref{MDE4.1}) and (\ref{MDE4.5}),
$\lim_{n\rightarrow\infty}\frac{v_n}{\tilde{u}_n}=0$, thus,
$$\liminf_{n\rightarrow\infty}\frac{n}{a_n^2}\log \mathbb{P}\left(\frac{\log Z_n-n\mathbb{E}\log m_0}{a_n}\geq x\right)
\leq\liminf_{n\rightarrow\infty}\frac{n}{a_n^2}\log
\tilde{u}_n=-\frac{(x+\epsilon)^2}{2\sigma^2}.$$
 Letting $\epsilon\rightarrow0$, we obtain (\ref{MDE4.7}).
\end{proof}

We remark that, by Lemma \ref{MDL4.2} below, Theorem \ref{MDT1.2} is in fact equivalent to Theorem \ref{MDT4.3}.
So the direct proof of Theorem \ref{MDT1.2} leads to another proof of Theorem \ref{MDT4.3}.

\begin{lem}\label{MDL4.2}
Let $I$ be a continuous function on $\mathbb{R}$ satisfying\\
\begin{equation*}
	\begin{array}{l}
	\text{(a) $I(b)=\inf_{x\in\mathbb{R}}I(x)=0$ for some $b\in\mathbb{R}$;}\\
	\text{(b) $I$ is strictly increasing on $[b,\infty)$ and strictly decreasing on $(-\infty,b]$.}
	\end{array}
\end{equation*}
Let $(\mu_n)$ be a family of probability distribution on $\mathbb{R}$ and
let $(a_n)$ be a sequence of positive numbers satisfying $a_n\rightarrow\infty$.
Then the following statements (i) and (ii) are equivalent.\\
(i) For $x< b$,
$$\lim_{n\rightarrow\infty}\frac{1}{a_n}\log \mu_n((-\infty,x])=-I(x);$$
for $x> b$,
$$\lim_{n\rightarrow\infty}\frac{1}{a_n}\log \mu_n([x,+\infty))=-I(x).$$
(ii) $(\mu_n)$ satisfies a large deviation principle: for any measurable subset $B$ of $\mathbb{R}$,
\begin{eqnarray}
-\inf_{x\in B^o}I(x)&\leq&\liminf_{n\rightarrow\infty} \frac{1}{a_n}\log \mu_n(B)\label{MDE4.11}\\
				&\leq&\limsup_{n\rightarrow\infty} \frac{1}{a_n}\log \mu_n(B)\leq-\inf_{x\in \bar{B}}I(x),\label{MDE4.12}
\end{eqnarray}
where $B^o$ denotes the interior of $B$ and $\bar{B}$ its closure.
\end{lem}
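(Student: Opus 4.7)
The plan is to prove the two implications separately, exploiting the shape of $I$: a continuous rate function with a unique minimum at $b$ that is strictly monotone on each side.

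The direction (ii) $\Rightarrow$ (i) is immediate: apply (ii) to $B=(-\infty,x]$ for $x<b$ and to $B=[x,\infty)$ for $x>b$. In each case continuity and strict monotonicity of $I$ imply $\inf_{y\in B^o}I(y)=\inf_{y\in\bar B}I(y)=I(x)$, so (\ref{MDE4.11}) and (\ref{MDE4.12}) squeeze $\frac{1}{a_n}\log\mu_n(B)$ to the desired limit $-I(x)$.

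For (i) $\Rightarrow$ (ii) I treat the upper and lower bounds separately. For the upper bound, set $\alpha=\inf_{x\in\bar B}I(x)$. The case $\alpha=0$ is trivial since $\mu_n(B)\leq 1$, so assume $\alpha>0$. For any $\alpha'\in(0,\alpha)$, continuity and strict monotonicity locate (possibly infinite) points $x_1'\leq b\leq x_2'$ with $I(x_1')=I(x_2')=\alpha'$ and $\{I\leq\alpha'\}=[x_1',x_2']$. Then $\bar B\subseteq(-\infty,x_1']\cup[x_2',\infty)$, so $\mu_n(B)\leq\mu_n((-\infty,x_1'])+\mu_n([x_2',\infty))$, and applying (i) to each tail yields $\limsup\frac{1}{a_n}\log\mu_n(B)\leq -\alpha'$. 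Letting $\alpha'\uparrow\alpha$ gives (\ref{MDE4.12}).

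The lower bound is the delicate step. Pick $x_0\in B^o$ and $\epsilon>0$ with $(x_0-\epsilon,x_0+\epsilon)\subseteq B^o$; by symmetry of argument and the trivial case $x_0=b$, assume $x_0+\epsilon<b$. Write
$$\mu_n\bigl((x_0-\epsilon,x_0+\epsilon/2]\bigr)=\mu_n\bigl((-\infty,x_0+\epsilon/2]\bigr)-\mu_n\bigl((-\infty,x_0-\epsilon]\bigr).$$
Since (i) delivers genuine limits and $I(x_0-\epsilon)>I(x_0+\epsilon/2)$ by strict decrease of $I$ on $(-\infty,b]$, the subtracted term is of strictly smaller exponential order than the first, so for $n$ large the right-hand side is at least half of the first term. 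Hence $\liminf\frac{1}{a_n}\log\mu_n(B)\geq -I(x_0+\epsilon/2)$; letting $\epsilon\downarrow 0$ and using continuity of $I$ yields $\liminf\frac{1}{a_n}\log\mu_n(B)\geq -I(x_0)$, and taking the infimum over $x_0\in B^o$ establishes (\ref{MDE4.11}). The main obstacle is precisely this conversion of one-sided tail information into a lower bound for the probability of a two-sided neighborhood: it forces one to control a difference of two exponentially small quantities, and hypothesis (b) (strict monotonicity of $I$) is exactly what guarantees the subtracted tail is negligible.
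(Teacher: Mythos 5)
Your proposal is correct and follows essentially the same route as the paper: the direction (ii) $\Rightarrow$ (i) by continuity and monotonicity, the lower bound (\ref{MDE4.11}) by writing a small interval inside $B^o$ as a difference of two tails and observing that the subtracted tail decays at a strictly faster exponential rate (exactly the step the paper carries out, just on the opposite side of $b$), and the upper bound (\ref{MDE4.12}) by covering $\bar B$ with a pair of half-lines and using (i) on each. The only cosmetic difference is that for the upper bound you parametrize the covering half-lines by sub-level sets $\{I\le\alpha'\}$ rather than by $\sup(B\cap(-\infty,b])$ and $\inf(B\cap(b,\infty))$ as the paper does, but the two choices are interchangeable here.
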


This is a general result on large deviations. It shows that
 the large deviation principe holds if and only if  the corresponding limit exists  for  tail events,  when the rate function is continuous and  strictly monotone. This result would be known; as we have not found a reference, we shall give a proof in an
appendix by the end of the paper.
\\*


\section{Central limit theorems for  $W-W_n$  and $\log Z_n$}\label{MDS6}
In this section, we shall prove the results about central limit theorems.

We first prove the central limit theorem on   $W-W_n$ with exponential convergence rate,  using the results about the harmonic moments of $Z_n$ (i.e. Theorem \ref{MDT5.3} with $t<0$).

\begin{proof} [Proof of Theorem \ref{MDT6.1}]
Notice that
\begin{equation*}
 \Pi_n(W-W_n) = \sum_{i=1}^{Z_n}\left( W^{(n)}_{i}-1 \right),
\end{equation*}
where under $\mathbb{P}_{\xi }$, the random variables $W^{(n)}_{i} (i=1,2,...)$ are
independent of each other and independent of $Z_n$, and have common conditional distribution $
\mathbb{P}_{\xi }(W^{(n)}_{i}\in \cdot )=\mathbb{P}_{T^{n}\xi }(W\in \cdot )$. Notice that
if $a_0:=$ essinf $\frac{m_0(2)}{m_0^2}>1$, then $\delta^2_\infty\geq a_0-1>0$. Therefore the condition
$\mathbb{E}Z_1^{2+\epsilon}<\infty$ implies that $\mathbb{E}\left|\frac{W-1}{\delta_\infty}\right|^{2+\epsilon}<\infty$.
By the Berry-Esseen theorem (see \cite{chow}, Theorem 9.1.3),
for all $x\in\mathbb{R}$,
\begin{equation}\label{MDET5.3.2}
\left|\mathbb{P}_\xi\left(\frac{\Pi_n(W-W_n)}{\sqrt{Z_n}\delta_\infty(T^n\xi)}\leq x\right)-\Phi(x)\right|
\leq C_1\mathbb{E}_{T^n\xi}\left|\frac{W-1}{\delta_\infty}\right|^{2+\epsilon}\mathbb{E}_\xi Z_n^{-\epsilon/2},
\end{equation}
where $C_1$ is the Berry-Esseen constant.
Taking expectation in (\ref{MDET5.3.2}), we obtain for all $x\in\mathbb{R}$,
\begin{equation}\label{MDET5.3.3}
\left|\mathbb{P}\left(\frac{\Pi_n(W-W_n)}{\sqrt{Z_n}\delta_\infty(T^n\xi)}\leq x\right)-\Phi(x)\right|
\leq C_1\mathbb{E}\left|\frac{W-1}{\delta_\infty}\right|^{2+\epsilon}\mathbb{E} Z_n^{-\epsilon/2}.
\end{equation}
Since $\mathbb{E} p_1 < \mathbb{E} m_0^{-\epsilon/2}$, $\|p_1\|_\infty <1$ and (H) holds, the condition (iii) of Theorem  \ref{MDT5.3} is satisfied, so that
by Theorem \ref{MDT5.3}, there exists a constant $C_\epsilon>0$ such that
$$\lim_{n\rightarrow\infty}\frac{\mathbb{E }Z_n^{-\epsilon/2}}{\left(\mathbb{E}m_0^{-\epsilon/2}\right)^n}=C_\epsilon.$$
Combing this  with (\ref{MDET5.3.3}), we obtain (\ref{MDET5.3.1}).
\end{proof}

We then prove the central limit theorem on $\log Z_n$, using the central limit theorem on  $\log \Pi_n$.

\begin{proof}[Proof of Theorem \ref{MDT1.1}]
  Let $x\in\mathbb{R}$.
By the standard central limit theorem for i.i.d. random variables,
\begin{equation}\label{MDE2.1}
\lim_{n\rightarrow\infty}\mathbb{P}\left(\frac{\log \Pi_n-n\mathbb{E}\log
m_0}{\sqrt{n}\sigma}\leq x\right)=\Phi(x).
\end{equation}
By (\ref{MDE0}), we have for every $\epsilon>0$,
\begin{eqnarray}\label{MDE2.2}
&&\mathbb{P}\left(\frac{\log Z_n-n\mathbb{E}\log m_0}{\sqrt{n}\sigma}\leq
x \right)\nonumber\\&\leq&\mathbb{P}\left( \frac{\log
W_n}{\sqrt{n}}<-\epsilon\sigma \right)+\mathbb{P}\left(\frac{\log
\Pi_n-n\mathbb{E}\log m_0}{\sqrt{n}\sigma}\leq x+\epsilon\right).
\end{eqnarray}
Since $\lim_{n\rightarrow\infty}\frac{\log W_n}{\sqrt{n}}=0$ a.s.,  we have
\begin{equation}\label{MDE2.3}
\lim_{n\rightarrow\infty}\mathbb{P}\left( \frac{\log
W_n}{\sqrt{n}}<-\epsilon\sigma \right)=0.
\end{equation}
Taking the  limit  superior in (\ref{MDE2.2}), and applying (\ref{MDE2.1}) and (\ref{MDE2.3}), we obtain
$$\limsup_{n\rightarrow\infty}\mathbb{P}\left( \frac{\log Z_n-n\mathbb{E}\log m_0}{\sqrt{n}\sigma}\leq x \right)\leq\Phi(x+\epsilon).$$
Letting $\epsilon\rightarrow0$, we get the upper bound. For the lower
bound, observe that
\begin{eqnarray}\label{MDE2.4}
&&\mathbb{P}\left( \frac{\log Z_n-n\mathbb{E}\log m_0}{\sqrt{n}\sigma}\leq x \right)\nonumber\\
&\geq&\mathbb{P}\left(\frac{\log \Pi_n-n\mathbb{E}\log m_0}{\sqrt{n}\sigma}\leq
x-\epsilon\right)-\mathbb{P}\left( \frac{\log
W_n}{\sqrt{n}}>\epsilon\sigma \right).
\end{eqnarray}
Similarly,
\begin{equation*}
\lim_{n\rightarrow\infty}\mathbb{P}\left(\frac{\log
W_n}{\sqrt{n}}>\epsilon\sigma\right)=0.
\end{equation*}
Taking the limit inferior in (\ref{MDE2.4}) and  letting
$\epsilon\rightarrow0$,  we get
$$\liminf_{n\rightarrow\infty}\mathbb{P}\left( \frac{\log \Pi_n-n\mathbb{E}\log m_0}{\sqrt{n}\sigma}\leq x \right)\geq\Phi(x).$$
So (\ref{MDE1.2}) is proved.
\end{proof}


\section{Appendix: proof of  Lemma \ref{MDL4.2}}

\begin{proof}[Proof of  Lemma \ref{MDL4.2}]
It is clear that (ii) implies (i) since $I$ is continuous. We need to prove (i) implies (ii). Firstly, we show (\ref{MDE4.11}). For $x\in B^o$,
consider the case where $x\geq b$. Then $B^o$ contains an interval $[x+\epsilon_1,x+\epsilon_2)$ for some $0<\epsilon_1<\epsilon_2$. Consequently, by (i),
$\forall \epsilon>0$, there exists $n_\epsilon>0$  such that $\forall n\geq n_\epsilon$,
\begin{eqnarray*}
\mu_n(B)&\geq &\mu_n([x+\epsilon_1,x+\epsilon_2))\\
&=&\mu_n([x+\epsilon_1,\infty))-\mu_n([x+\epsilon_2,\infty))\\
&\geq& e^{-a_n(I(x+\epsilon_1)+\epsilon)}-e^{-a_n(I(x+\epsilon_2)-\epsilon)}.
\end{eqnarray*}
Since $I$ is strictly increasing on $[b,\infty)$, we can take $\epsilon>0$ small enough such that
$I(x+\epsilon_1)+\epsilon<I(x+\epsilon_2)-\epsilon$. Therefore,
$$\liminf_{n\rightarrow\infty} \frac{1}{a_n}\log \mu_n(B)\geq- I(x+\epsilon_1)-\epsilon.$$
Letting $\epsilon,\epsilon_1\rightarrow 0$, we get
\begin{equation}\label{MDE4.13}
\liminf_{n\rightarrow\infty} \frac{1}{a_n}\log \mu_n(B)\geq- I(x)
\end{equation}
If $x<b$,   we obtain(\ref{MDE4.13}) by a similar argument. So (\ref{MDE4.13}) holds for all $x\in B^o$, which yields (\ref{MDE4.11}).

Now we show (\ref{MDE4.12}). If $b\in \bar{B}$, then (\ref{MDE4.12}) is obvious since $\mu_n(B)\leq 1$ and the right side of (\ref{MDE4.12})
is $0$. Assume that $b\notin \bar{B}$. Let $B_1=B\bigcap(-\infty, b]$ and $B_2=B\bigcap(b,\infty)$ so that $B=B_1\bigcup B_2$. Then
$$B_1\subset (-\infty, b_1]\;(\text {if $B_1\neq \emptyset $})\qquad \text{and}\qquad
B_2\subset [b_2,\infty)\;(\text {if $B_2\neq \emptyset $}),$$
where $b_1:=\sup B_1$ and $b_2:=\inf B_2$. Assume that $B_1\neq \emptyset $ and $B_2\neq \emptyset $. As $b\notin \bar{B}$, we
have $b_1<b<b_2$. By (i), $\forall \epsilon>0$, there exists $n_\epsilon>0$  such that $\forall n\geq n_\epsilon$,
\begin{eqnarray*}
\mu_n(B)&\leq & \mu_n([-\infty,b_1])+\mu_n([b_2,\infty))\\
&\leq& e^{-a_n(I(b_1)-\epsilon)}+e^{-a_n(I(b_2)-\epsilon)}\\
&\leq&2e^{-a_n(I_0-\epsilon)},
\end{eqnarray*}
where $I_0:=\min\{I(b_1),I(b_2)\}=\inf_{x\in\bar{B}}I(x)$. Therefore,
$$\limsup_{n\rightarrow\infty} \frac{1}{a_n}\log \mu_n(B)\leq- I_0+\epsilon.$$
Letting $\epsilon\rightarrow 0$, we obtain
\begin{equation*}
\limsup_{n\rightarrow\infty} \frac{1}{a_n}\log \mu_n(B)\leq- I_0=-\inf_{x\in\bar{B}}I(x).
\end{equation*}
If $B_1=\emptyset$ or $B_2=\emptyset$, we obtain (\ref{MDE4.12}) by a similar argument.
\end{proof}

\bigskip
{\emph Acknowledgement.} The authors would like to thank an anonymous referee for valuable comments and remarks.


\begin{thebibliography}{99}

\bibitem{af1} V.I. Afanasyev, J. Geiger, G. Kersting, V.A. Vatutin. Criticality for branching processes in random environment.  Ann. Probab.  33  (2005),  no. 2, 645-673.

\bibitem{af2} V.I. Afanasyev, J. Geiger, G. Kersting, V.A. Vatutin. Functional limit theorems for strongly subcritical branching processes in random environment.  Stochastic Process. Appl.  115  (2005),  no. 10, 1658-1676.


\bibitem{a}K.B. Athreya,  P.E. Ney. Branching
Processes. Springer, Berlin, 1972.

\bibitem{a1}K.B. Athreya,  S. Karlin. On branching processes in random environments I, II. Ann. Math. Statist. 42 (1971), 1499-1520, 1843-1858.

\bibitem{ba}V. Bansaye,  J. Berestycki. Large deviations for branching
processes in random environment. Markov Process. Related Fields 15 (2009), 493-524.

\bibitem{ba2}V. Bansaye,   C. B\"oinghoff. Upper large deviations for branching
processes in random environment with heavy tails. Preprint (2010).

\bibitem{bd} C. B\"oinghoff, E.E. Dyakonova,  G. Kersting, V.A. Vatutin. Branching processes in random environment which extinct at a given moment.  Markov Process. Related Fields  16  (2010),  no. 2, 329-350.

\bibitem{bo}C. B\"oinghoff,  G. Kersting. Upper large deviations of branching processes in a random environment--Offspring distributions with geometrically bounded tails.  Stoch. Proc. Appl. 120 (2010), 2064-2077.


\bibitem{chow}Y.S. Chow, H. Teicher. Probability theory: Independence, Interchangeability and Martingales. Springer-Verlag, New York, 1988.

\bibitem{z}A. Dembo, O. Zeitouni. Large deviations Techniques and
Applications. Springer, New York, 1998.

\bibitem{liu1}Y. Guivarc'h, Q. Liu. Propri\'et\'es asymptotiques des
processus de branchement en environnement
al\'eatoire.  C. R. Acad. Sci. Paris, Ser I. 332 (2001), 339-344.

\bibitem{ham}B. Hambly. On the limit distribution of a supercritical
branching process in a random
environment. J. Appl. Prob. 29 (1992), 499-518.

\bibitem{heyde1}C.C. Heyde.  Some central limit analogues for super-critical Galton-Watson process. J. Appl. Probab. 8 (1971), 52-59.

\bibitem{heyde2} C.C. Heyde,  B.M. Brown. An invariance principle and some convergence rate results for branching processes. Z. Wahrscheinlichkeitstheorie verw. Geb. 20 (1971), 189-192.

\bibitem{jajer} P. Jagers.  Galton-Watson processes in varying environments. J. Appl. Prob. 11 (1974), 174-178.

\bibitem{kozlov}M.V.  Kozlov. On large deviations of branching processes in a random environment : geometric distribution of descendants. Discrete Math. Appl. 16 (2006) 155-174.

\bibitem{liu1999}Q. Liu. Asymptotic properties of supercritical age-dependent branching
processes and homogeneous branching random walks.  Stoch. Proc. Appl.
82 (1999), 61-87.

\bibitem{liu2}Q. Liu. Asymptotic properties and absolute continuity
of laws stable by random weighted mean.
Stoch. Proc. Appl. 95 (2001), 83-107.

\bibitem{liu3}Q. Liu. Local dimensions of the branching measure on a Galton-Watson tree. Ann. Inst. Henri. Poincar\'e, Probabiliti\'es et Statistique  37 (2001), 195-222.

\bibitem{liu4}Q. Liu,  A. Rouault. Limit theorems for Mandelbrot's multiplicative cascades. Ann. Appl. Proba. 10 (2000), 218-239.

\bibitem{liu5}Q. Liu. The growth of an entire characteristic function and the tail probabilities of the limit of a tree martingale. In: Chauvin B., Cohen S.,  Rouault A., Trees. Progress in Probability, vol.40, Birkh\"auser, Basel, (1996), 51-80.

\bibitem{ney}P.E. Ney,  A.N. Vidyashanker. Harmonic moments and large deviation rates for supercritical branching process. Ann. Appl. Proba. 13 (2003), 475-489.

\bibitem{smith}W.L. Smith, W.E. Wilkinson.  On branching processes in random environments. Ann. Math. Statist. 40 (1969), 814-827.

\bibitem{tanny1}D. Tanny. Limit theorems for branching processes in
a random environment. Ann. Proba. 5 (1977), 100-116.

\bibitem{tanny2}D. Tanny. A necessary and sufficient condition for a
branching process in a random environment to grow like the product
of its means. Stoch. Proc. Appl. 28 (1988), 123-139.

\bibitem{wang}H. Wang, Z. Gao, Q. Liu. Central limit theorems for a branching process in a random environment.  Stat. Prob. Letters 81 (2011) 539-547.


\end{thebibliography}
\end{document}